\newcommand{\stkout}[1]{\ifmmode\text{\sout{\ensuremath{#1}}}\else\sout{#1}\fi}
\definecolor{darkviolet}{rgb}{0.58, 0.0, 0.83}
\def\<{\left\langle }
\def\>{\right\rangle }
\newtheorem{theorem}{Theorem}[section]
\newtheorem{proposition}[theorem]{Proposition}
\newtheorem{lemma}[theorem]{Lemma}
\newtheorem{assumption}[theorem]{Assumption}
\newtheorem{corollary}[theorem]{Corollary}
\newtheorem{remark}[theorem]{Remark}
\newtheorem{definition}[theorem]{Definition}
\newtheorem{Lemma}[theorem]{Lemma}
\def\R{\mathbb R}
\def\d{\mathrm{d}}
\def\cala{{\mathcal A}}
\def\calc{{\mathcal C}}
\def\calu{{\mathcal U}}
\newcommand{\de}{{\ensuremath{\mathrm e}} }
\newcommand{\dL}{{\ensuremath{\mathrm L}} }
\def \cC{{\mathcal C}}
\def \R{\mathbb{R}}
\def\d{\mathrm{d}}
\DeclareMathOperator*{\argmax}{argmax}
\definecolor{red}{rgb}{1.0,0.0,0.0}
\definecolor{blu}{rgb}{0.0,0.0,1.0}
\def\blu#1{{\textcolor{blu}{#1}}}
\definecolor{gre}{rgb}{0.03,0.50,0.03}
\newcommand{\disableblucomments}{\renewcommand{\blu}{}}
\def\@setthanks{\vspace{-\baselineskip}\def\thanks##1{\@par##1\@addpunct.}\thankses}
\newcommand{\mail}[1]{\href{mailto:#1}{\normalfont\texttt{#1}}}
\title[Constrained control problems in Banach lattices]{State constrained control problems\\ in Banach lattices and applications}
\author[A.~Calvia ]{Alessandro Calvia\textsuperscript{\MakeLowercase{a},1}}
\thanks{\noindent \textsuperscript{a} Dipartimento di Economia e Finanza, LUISS \emph{Guido Carli}, Roma.}
\author[S.~Federico ]{Salvatore Federico\textsuperscript{\MakeLowercase{b},2}}
\thanks{\noindent \textsuperscript{b} Dipartimento di Economia, Università degli Studi di Genova.}
\author[F.~Gozzi ]{Fausto Gozzi\textsuperscript{\MakeLowercase{a},3}}
\thanks{\noindent \textsuperscript{1} E-mail: \mail{acalvia@luiss.it}.
\\
\noindent \textsuperscript{2} E-mail: \mail{salvatore.federico@unige.it}
\\
\noindent \textsuperscript{3} E-mail: \mail{fgozzi@luiss.it}}
\numberwithin{equation}{section}
\begin{document}

\frenchspacing

\disableblucomments

\begin{abstract}
This paper aims to study a family of deterministic optimal control problems in infinite dimensional spaces. The peculiar feature of such problems is the presence of a positivity state constraint, which often arises in economic applications.
To deal with such constraints, we set up the problem in a Banach space with a Riesz space structure (i.e., a Banach lattice) not necessarily reflexive: {a typical example is the space of continuous functions on a compact set.}
In this setting, which seems to be new in this context, we are able to find explicit solutions to the Hamilton-Jacobi-Bellman (HJB) equation associated to a suitable auxiliary problem \blu{and to write the corresponding optimal feedback control. Thanks to a type of infinite dimensional Perron-Frobenius Theorem, we
use these results} to get information about the optimal paths of the original problem.
This was not possible in the infinite dimensional setting \blu{used in earlier works on this subject}, where the state space was an $\dL^2$ space.
\end{abstract}

\maketitle

\noindent \emph{Key words}: Optimal control in infinite dimension, Dynamic programming, State constraints, Hamilton-Jacobi-Bellman equation, Banach lattice, AK model of economic growth.

\bigskip

\noindent \emph{AMS 2020:} 46B42, 49K27, 49L20, 93C20, 93C25

\bigskip

\section{Introduction} \label{sec:introduction}
A typical feature of optimal control problems arising in economic applications --- as well as in other fields --- is the presence of positivity state and/or control constraints. This feature is very common and makes the problem nontrivial, {even in the case where} the state space has finite dimension. Indeed, 
on the one hand, \blu{if one approaches the problem by the Maximum Principle, then jumps of the co-state variables, associated to optimality conditions, may arise} (see, e.g., \citet{hartl1995:mp} in finite dimension and \citet[Ch. 11]{Fattorini99} in infinite dimension);
on the other hand, \blu{if one employs Dynamic Programming techniques, it is hard} to get well-posedness and regularity results for the associated HJB equation (see, e.g., \citet{capuzzolions1990:hjb,soner:optcontrol1} in finite dimension; \citet{CannarsaDiBlasio95, CannarsaGozziSoner91, Faggian08, KocanSoravia98} in infinite dimensions; 
see also, in the stochastic case, \citet{Katsoulakis94,
calvia:MCcontrol, calvia:filtcontrol}, and the book by \citet[Ch. 3]{fabbri:soc}).

\blu{In the last decades, various papers in the economic literature considered optimal control problems where the state variable is  infinite dimensional. This typically happens when one has to take into account heterogeneity: for instance, the evolution of a key economic variable like capital may depend not only on time, but also on its spatial position or on its vintage  (see, e.g., \citet{BFG, BDFG,  boucekkine:spatialAK, BFFGjoeg,  BFFGpafa, feichtinger2006:vintage}, \citet{FabbriGozzi08,BoucekkineetalJET2005}). The setting where such optimal control problems were studied in this stream of literature is the separable Hilbert space $\dL^2$. Also in this case, positivity state constraints represent an essential feature of the problem. Handling them in this infinite dimensional setting turns out to be even more problematic than in finite dimension, as the positive cone of $\dL^2$ has empty interior and it is not obvious how to interpret the derivative in the associated HJB equation. Consequently,  it is not obvious to give sense to the optimal feedback map, which, as well known, depends on the derivative of the solution to the HJB equation.}

A possible way to deal with such difficulty (cf., e.g., \citet{BFG,BDFG}, \citet{boucekkine:spatialAK, BFFGjoeg}, \citet{FabbriGozzi08,BoucekkineetalJET2005}) is to study an auxiliary problem \blu{under a relaxed} state constraint, by allowing the state variable to evolve in a suitable half-space where the solution of the HJB equation can be found explicitly.
\blu{If it is possible to find the optimal paths for the auxiliary problem, one can try to show that, at least for some initial data (hopefully, interesting for applications), these  paths satisfy the positivity state constraint, and hence are optimal for the initial problem, too. This important property, i.e., the admissibility of the auxiliary optimal paths for the original problem, has been rigorously established in \citet{boucekkine:spatialAK, BFFGjoeg} only for the \emph{steady states} of the problem, that constitute a one dimensional set of initial conditions.} To the best of our knowledge, only in \cite{BDFG} this point is successfully addressed for a larger nontrivial set of initial conditions, but at the price of a technical, involved, and tailor-made argument.
\blu{Thus, the main motivation of our paper is to rigorously prove that admissibility of the auxiliary optimal paths for the original problem holds for a sufficiently rich set of initial conditions, in the context of economic growth problems in time-space like the ones introduced \citet{boucekkine:spatialAK, BFFGjoeg}: these results  are collected in Section~\ref{sec:app} (cf. Theorem~\ref{th:econconv2} and Corollary~\ref{th:econconvconst}).}


\blu{To achieve this goal, we consider the problems in a state space different from $\dL^2$; since heterogeneity is usually modeled by a compact metric space $D$, the natural candidate to use is the space of continuous functions on $D$, endowed with the \emph{sup-norm} and the standard pointwise partial order. This norm allows to deal properly with pointwise constraints, such as the positivity state constraint that characterizes our optimization problems}.
\blu{However, we realized that, for the class of control problems with the features described above, many results holding in the setting of continuous functions still hold in the more general setting of Banach lattices. Since we think that this more general setting can be useful for future applications, we build our theory  in this abstract setup. More specifically, we consider:}
\begin{itemize}
  \item[--] as state space $X$, a general separable Banach lattice of real valued functions defined on a measure space $(D,\mu)$; the strictly positive orthant $X_{++}$ is naturally defined and provides the required positivity state constraint;\vspace{2pt}
  \item[--] as state equation, the linear equation  $x'(t)=Lx(t)-Nc(t)$, where  $x$ and $c$ are, respectively, the state and the control variable and  $L$ and $N$ are suitable operators;\vspace{2pt}
  \item[--] as objective functional, a type of discounted utility over infinite horizon, i.e., a functional of the form
      $\int_{0}^{\infty} \de^{-\rho t}\mathcal{U}(c(t)) \, \d t$,
      where $\mathcal{U}$ is concave.
\end{itemize}
\blu{In this general setup, we define an appropriate auxiliary problem where the state constraint is a suitable half-space of $X$, that strictly contains the positive cone. For this problem we prove a verification theorem (Theorem~\ref{verification}). Moreover, in the case 
when $X$ is either an $\dL^{p}$ space or the space of continuous function and $\mathcal{U}$ is homogeneous, we show that the HJB equation can be explicitly solved (Proposition~\ref{prop:exp}), and, consequently, that the optimal feedback control can be found (Theorem~\ref{teo:main}).
\footnote{\blu{It is worth noticing that HJB equations in Banach spaces have been scarcely studied in the literature (see, e.g., \citet{addona2020:BE,fuhrman2010:SDDE,masiero2008:soc,masiero2016:hjb} for mild solutions to second order semilinear HJB equations; \citet{crandall85:HJB1, crandall86:HJB2, crandall86:HJB3, soner88:HJB}, for viscosity solutions to first order  HJB equations).
}}}

\blu{Once the auxiliary problem in the half-space is explicitly solved, we are in the position to achieve our main goal, i.e., to prove a stability result (Theorem~\ref{th:stability}) that provides the basis to show admissibility of the auxiliary optimal paths for the original problem, at least when the initial conditions are in a neighborhood of the \blu{corresponding steady state} (Theorem~\ref{th:econconv2} and Corollary~\ref{th:econconvconst}).}
\blu{To get these results, we need an important assumption that concerns the spectrum of the operator $L$ appearing in the state equation above (cf. point~\textit{(\ref{ass:eigenvect})} of Assumption~\ref{ass:explicit}). This assumption can be verified thanks to  Perron-Frobenius-type results in Banach spaces, which are provided in Section~\ref{sec:perronfrobenius}.}

The plan of the paper is the following.
\begin{itemize}
\item[--] Section~\ref{sec:model} is devoted to present the general setup mentioned above;
\item[--] Section~\ref{sec:hjb} contains the main abstract results of the paper on the auxiliary problem: the verification theorem (Subsection~\ref{sec:ver}), the explicit solution to the auxiliary HJB equation (Subsection~\ref{sec:hjbexpl}), the stability result (Subsection~\ref{sec:ss});
\item[--] Section~\ref{sec:perronfrobenius} introduces the Perron-Frobenius-type results \blu{guaranteeing the aforementioned key assumption  needed in Section~\ref{sec:hjb};}
\item[--] Section~\ref{sec:app} shows how the general theory exposed in the first part of the paper can be applied to a family of economic growth problems, including the one studied in \citet{boucekkine:spatialAK, BFFGjoeg}.
\item[--] \blu{Section~\ref{sec:conclusion} is devoted to discuss potential extensions of our approach to optimal control problems with positivity state constraints and nonlinear state equations.}

\end{itemize}

\section{The optimal control problem} \label{sec:model}
\blu{Set $\R_+ \coloneqq [0,+\infty)$, let $(D,\mu)$ be a measure space {with a countably generated $\sigma$-algebra}\footnote{In most applied examples $D$ has a topological structure. However, at this abstract stage, this is not needed.}, and let $(X, |\cdot|_X, \leq_X)$ be a separable Banach lattice of real-valued functions defined on $D$. The symbols $|\cdot|_X$ and $\leq_X$ denote, respectively, the norm and the ordering on $X$. }

\blu{Denote by $X^\star$ the topological dual of $X$ and let $\langle\cdot,\cdot\rangle$ be the dual pairing of $X, X^\star$. We have that $X^\star$ is an order-complete Banach lattice with its usual norm, denoted by $|\cdot|_{X^\star}$, {and with the natural ordering $\leq_{X^\star}$ defined as follows (see, e.g., \citep[pag. 239]{arendt1986:possgr}): given ${\varphi^\star}, {\psi^\star} \in X^\star$,
\begin{equation*} 
{\varphi^\star} \leq_{X^\star} {\psi^\star} \quad \Longleftrightarrow \quad \langle f, {\varphi^\star} \rangle \leq \langle f, {\psi^\star} \rangle, \quad \forall f \geq_X 0.
\end{equation*}
}}
\blu{We consider the positive orthants (or positive cones) of $X,X^\star$, i.e., the sets
\begin{equation*}
X_+ \coloneqq \{f\in X \colon \  f\geq_X 0\}, \ \ \ \ \ \  
X^\star_+ \coloneqq \{{\varphi^\star}\in X^\star \colon \ \varphi^\star \geq_{X^{\star}} 0\},
\end{equation*}
and the strictly positive orthants (or strictly positive cones) of $X,X^\star$, i.e., the sets (see e.g. \citep[pag. 119]{arendt1986:possgr})
\begin{align*}
X_{++} &\coloneqq \{f\in X \colon \  \langle f,{\varphi^\star}\rangle>0, \ \  \ \forall\, 0\neq {\varphi^\star}\in {X^\star_+}\}, 
\\
X^\star_{++} &\coloneqq \{{\varphi^\star}\in X^\star \colon \  \langle f, {\varphi^\star} \rangle > 0, \ \ \ \forall\,{0 \neq f \in X_+}\}.
\end{align*}
We write $f >_X 0$ {if $f\in X_{++}$ and ${\varphi^\star} >_{X^\star}0$  if ${\varphi^\star}\in X^\star_{++}$.}
}
\blu{\begin{remark}
The following spaces of functions, typical in applications, fall within the abstract setting above:
\begin{enumerate}[(i)]
\item $X = \dL^p(D, \mu)$, $p \in [1,+\infty)$, where $\mu$ is $\sigma$-finite. In this case $X$ is separable, $X^\star = \dL^q(D, \mu)$, where $q$ satisfies $p^{-1} + q^{-1} = 1$, and $\mathrm{int}\,X_+ = \emptyset$.
\item $X = \calc(D)$, the space of real-valued continuous functions on a compact metric space $D$, equipped with the sup-norm; {$\mu$ can be any Borel measure on $D$.} In this case, $X$ is a Banach lattice with order unit, $X^\star$ is the space of regular Borel measures on $D$, and $\mathrm{int}\,X_+ =X_{++} \neq \emptyset$.
\item $X = \calc_0(D)$, the space of real-valued continuous functions vanishing at infinity on a locally compact metric space $D$, equipped with the sup-norm; {$\mu$ can be any Borel measure on $D$.} In this case, $X^\star$ is the space of regular Borel measures on $D$, and  $\mathrm{int}\,X_+ = \emptyset$.
\item Given a locally compact metric space $D$ and a continuous function $w \colon D \to (0,+\infty)$, the space $X = \calc_w(D)$ of real-valued continuous functions $f$ on $D$ such that $fw$ is bounded, equipped with the norm
$$|f|_{X}:=\sup_{x\in D} \left|{f(x) w(x)}\right|.$$
Also here $\mu$ can be any Borel measure on $D$. A typical case is  $D=\R^{n}$ and $w(x)=\frac{1}{1+|x|^{k}}$, which allows to deal with continuous data with polynomial growth at infinity.
\end{enumerate}
\end{remark}}

\blu{We introduce the optimal control problem in the space $X$ that we aim to study.
Let $L \colon D(L)\subseteq X\to X$ be a (possibly) unbounded linear operator and let $N \colon X\to X$ be a bounded linear operator.
Given $x_0\in X$ and a control function $c\in \dL^1_{loc}(\R_+;X)$, we consider the following abstract \emph{state equation} in $X$:
\begin{equation}\label{state}
\left\{
\begin{aligned}
&x'(t)=Lx(t)-Nc(t), \quad t \geq 0, \\
&x(0)=x_0.
\end{aligned}
\right.
\end{equation}
To stress the dependence of the solution to \eqref{state} on $x_0 \in X$ and $c\in \dL^1_{loc}(\R_+;X)$, we will denote it by $x^{x_0, c}$.
The following assumption will be in force throughout the paper.
\begin{assumption}\label{ass:L}
\begin{enumerate}[(i)]
\item[]
\item\label{hyp:L} The linear operator $L \colon D(L)\subseteq X\to X$ is closed, densely defined, and generates a \mbox{$C_0$-semigroup} $\{\de^{{tL}}\}_{t \geq 0}$ in $X$. Moreover, the semigroup $\{\de^{{tL}}\}_{t \geq 0}$ {preserves strict positivity}, i.e., $\de^{{tL}}(X_{++})\subseteq X_{{++}}$, for all $t\geq 0$.
\item\label{hyp:N} The linear operator $N \colon  X\to X$ is bounded and positive, i.e., $N(X_+) \subseteq X_+$.
\end{enumerate}
\end{assumption}
\begin{remark}
{It is worth noticing that, by continuity, point~\textit{(\ref{hyp:L})} of Assumption~\ref{ass:L} implies that $\{\de^{{tL}}\}_{t \geq 0}$ is also a positive semigroup; that is, it satisfies $\de^{{tL}}(X_{+})\subseteq X_{{+}}$, for all $t\geq 0$.
Sufficient conditions guaranteeing positivity of semigroups can be found, e.g., in \citep[B-II, Th. 1.6, Th. 1.13, C-II, Th. 1.2, Th. 1.8]{arendt1986:possgr}) or \citep[Th. 7.29, Prop. 7.46]{clement1987:sgr}.}
\end{remark}
According to \cite[p. 129]{BDDM}, for each $x_0 \in X$ and $c\in \dL^1_{loc}(\R_+;X)$, {we call \emph{mild solution} to \eqref{state} the function}
\begin{equation}\label{mild}
x^{x_0, c}(t)\coloneqq\de^{tL}x_0-\int_0^t \de^{(t-s)L} Nc(s) \, \d s, \quad {t \geq 0.}
\end{equation}
By \cite[p. 131]{BDDM}, {the mild solution defined in} \eqref{mild} is also a \emph{weak solution} to \eqref{state}, i.e., {it satisfies, for all $\varphi^\star\in D(L^\star)$,}
\begin{equation}\label{weak}
\langle x^{x_0, c}(t),{\varphi^\star}\rangle = \langle x_0,{\varphi^\star}\rangle +\int_0^t \langle x^{x_0, c}(s), L^\star{\varphi^\star}\rangle \, \d s-\int_0^t\langle Nc(s),{\varphi^\star}\rangle \, \d s, \quad {t \geq 0,}
\end{equation}
where $L^\star \colon D(L^\star)\subseteq X^\star\to X^\star$ denotes the adjoint of $L$.}

\blu{We are interested in analyzing an optimal control problem where the state variable, satisfying \eqref{state}, remains in the strictly positive cone of $X$. This state constraint is expressed introducing the following class of admissible controls, depending on $x_0 \in X$:
\begin{equation}\label{eq:A++def}
\mathcal{A}_{{++}}(x_0)\coloneqq\{c\in  \dL^1_{loc}(\R_+;X_+) \colon x^{x_0,c}(t)\in X_{{++}}, \, \forall t\geq 0\}.
\end{equation}
For the reasons anticipated in the introduction and that will become clearer later, we are led to consider a larger class of admissible controls. Precisely, given $x_0 \in X$ and $\varphi^\star \in X^\star_{++}$, we define the set:
\begin{align}\label{eq:Arelax}
\mathcal{A}^{\varphi^\star}_{++}(x_0)&\coloneqq\{c\in  \dL^1_{loc}(\R_+;X_+) \colon \, \langle x^{x_0,c}(t),\varphi^\star\rangle > 0, \, \forall t\geq 0\}\\
&= \{c\in  \dL^1_{loc}(\R_+;X_+) \colon \  x^{x_0,c}(t) \in {X^{\varphi^\star}_{++}}, \, \forall t\geq 0\},\nonumber
\end{align}
where $X^{\varphi^\star}_{++}$ is the open (infinite-dimensional) half-space of $X$ generated by $\varphi^\star\in X_{++}^\star$, i.e.,
\begin{equation*}
X^{\varphi^\star}_{++}\coloneqq\{f\in  X:  \ \ \langle f,\varphi^\star\rangle >0\}.
\end{equation*}
Notice that, for any $\varphi^\star\in X_{++}^\star$, we have the set inclusion $X_{++} \subseteq X^{\varphi^\star}_{++}$. In turn, this implies $\mathcal{A}_{{++}}(x_0)\subseteq  \mathcal{A}^{\varphi^\star}_{++}(x_0)$, for any $x_0 \in X$. Thus, we are relaxing the state constraint when passing from the former to the latter set of admissible controls.
%
For future reference, we observe that: 
\begin{itemize}
\item[--] both $\mathcal{A}_{{++}}(x_0)$ and $\mathcal{A}^{\varphi^\star}_{++}(x_0)$ are convex sets, due to linearity of the state equation;
\item[--] if $x_0\in X_{++}$,  then $\mathcal{A}_{{++}}(x_0)$ and $\mathcal{A}^{\varphi^\star}_{++}(x_0)$ are non-empty, as  
 the null control $c(\cdot)\equiv 0$ belongs to them;
 \item[--] since $N$ is a positive operator, we have the following monotonicity property:
$$
c_1(t) \leq_X c_2(t), \text{ for almost all } t \geq 0 \ \Longrightarrow \ x^{x_0,c_1}(t) \geq_X  x^{x_0,c_2}(t), \text{ for all } t \geq 0;
$$
hence, if $c_1(t) \leq_X c_2(t)$ for a.e. $t \geq 0$ and $c_2\in \mathcal{A}^{\varphi^\star}_{++}(x_0)$, then $c_1\in \mathcal{A}^{\varphi^\star}_{++}(x_0)$.
\end{itemize}
}

\blu{We  complete our setting defining the functional  to optimize.
Let $u \colon D\times \R_+\to\R\cup\{-\infty\}$ be a measurable function satisfying the following assumption, that will be standing throughout the paper.}
\blu{\begin{assumption}\label{ass:u}
The function $u \colon D\times \R_+\to\R\cup\{-\infty\}$ is such that $u(\theta,\cdot)$ is increasing and concave\footnote{From an economic perspective, these two properties entail that $u(\theta, \cdot)$ is a utility function, for any $\theta \in D$.} for all $\theta \in D$. Moreover,
$u$ is either bounded from above or from below.
Without loss of generality we assume that either $u \colon D\times\R_+\to[-\infty,0]$ or  $u \colon D\times \R_+\to\R_+.$
\end{assumption}}
\blu{Next, we consider the functionals
\begin{equation*}
\mathcal U(z)\coloneqq\int_D u(\theta,z(\theta)) \, \mu(\d\theta), \ \ \ \ z\in X_+,
\end{equation*}
and
\begin{equation}\label{eq:functional}
\blu{\mathcal{J}(c)\coloneqq\int_0^\infty \de^{-\rho t}	\mathcal{U}(c(t)) \, \d t, \quad c \in \dL^1_{loc}(\R_+;X_+),}
\end{equation}
where $\rho>0$ is a given discount factor. 
Notice that both $\mathcal{U}$ and, consequently, $\mathcal{J}$ inherit concavity from $u$.
}

\blu{
The optimal control problem we are interested in is 
\begin{equation}\label{eq:pbP}
\tag{$P$} \text{Maximize } \mathcal{J}(c) \text{ over the set } \mathcal{A}_{{++}}(x_0), \quad x_0\in {X_{++}},
\end{equation}
whose value function is
\begin{equation*}
V(x_0)\coloneqq\sup_{c\in \mathcal{A}_{++}(x_0)} \mathcal{J}(c), \quad x_0\in {X_{++}}.
\end{equation*}
As anticipated, we relax the state constraint imposed by~\eqref{eq:A++def} introducing the family, parameterized by $\varphi^\star \in {X^\star_{++}}$, of auxiliary optimal control problems
\begin{equation}\label{eq:pbPhi}
\tag{$P^{\varphi^\star}$} \text{Maximize } \mathcal{J}(c) \text{ over the set } \mathcal{A}^{\varphi^\star}_{++}(x_0), \quad x_0\in {X_{++}},
\end{equation}
whose value function is
\begin{equation*}
V^{\varphi^\star}(x_0)\coloneqq\sup_{c\in \mathcal{A}^{\varphi^\star}_{++}(x_0)} \mathcal{J}(c), \quad x_0\in {X_{++}}.
\end{equation*}
\begin{remark}\label{rem:importante}
{Since the state constraint imposed by~\eqref{eq:A++def} is stricter than the one imposed by~\eqref{eq:Arelax}, we see that}
\begin{itemize}
\item[(i)] $V^{\varphi^\star}(x_0)\geq V(x_0)$ for every $x_0\in X_{{++}}$; \medskip
\item[(ii)]
If $\hat{c}\in \mathcal{A}^{\varphi^\star}_{++}(x_0)$ is optimal for \eqref{eq:pbPhi}\! and belongs to  $\mathcal{A}_{++}(x_0)$, then it is optimal for \eqref{eq:pbP}.
\end{itemize}
\end{remark}}
\blu{
The reason to relax the state constraint and to consider the new family of optimal control problems is that, as we will show in 
 Section~\ref{sec:hjb}, problem~\eqref{eq:pbPhi} admits an explicit solution and an optimal feedback control. If it is possible to prove that this feedback control belongs to $\mathcal{A}_{++}(x_0)$, then the sufficient condition of Remark~\ref{rem:importante}~(ii) holds and, consequently, this feedback control will be optimal for problem~\eqref{eq:pbP}, too. In the economic growth model studied in Section~\ref{sec:app} we will show that this is the case (provided that suitable assumptions hold), thanks to the stability result provided by Theorem~\ref{th:stability}.
}

\blu{It is worth noticing that we cannot say \emph{ex ante} that $V^{\varphi^\star}$ is finite. Even in simple one-dimensional cases (see, e.g., \cite{FreniGozziSalvadori06}) it may be always $+\infty$ or always $-\infty$. Sufficient conditions for finiteness will be provided later (see point~\textit{(\ref{ass:lambda0})} of Assumption~\ref{ass:explicit} and point~\textit{(\ref{rem:Vfinite})} of Remark~\ref{rem:expl}).}

\blu{We conclude this section introducing the \emph{Hamilton-Jacobi-Bellman} equation (HJB, {for short}) associated to the optimal control problem \eqref{eq:pbPhi} indexed by $\varphi^\star \in {X^\star_{++}}$:
\begin{equation}\label{HJB}
\rho v(x)=\langle Lx,\nabla v(x)\rangle + \mathcal{H}(\nabla v(x)), \ \ \ \ x\in X_{++}^{\varphi^\star},
\end{equation}
where the Hamiltonian function $\mathcal{H}$ is defined by
$$
\mathcal{H}(q^\star) \coloneqq\sup_{z\in X_+}\mathcal{H}_{CV}(q^\star;z),  \qquad q^\star\in X^\star,
$$
and $\mathcal{H}_{CV}$ is  the Current Value Hamiltonian function
$$
\mathcal{H}_{CV}(q^\star;z) \coloneqq\mathcal{U}(z)-\langle Nz,q^\star\rangle, \quad z\in X_+,\, q^\star\in X^\star.
$$
Notice that, without further assumptions, it may happen that $\mathcal H(q^\star) = +\infty$, for some $q^\star \in X^\star$.
Finiteness of $\mathcal H(q^\star)$ is guaranteed at least in the following cases:
\begin{itemize}
\item[--] If $\calu$ is bounded above and $q^\star\in X^\star_+$;\smallskip
\item[--] If $\calu$ is bounded below and there exist constants $C > 0$, $\alpha \in (0,1)$, and $k > 0$, such that $\calu(z) \leq C |z|_X^\alpha$ and $\langle Nz, q^\star\rangle \geq k |z|_X$, $q^\star \in X^\star$.
\end{itemize}
}

\blu{In this paper we will consider classical solutions to HJB equation~\eqref{HJB}, according to the following definition.} 
\blu{\begin{definition}\label{def:solHJB}
{A function
$v\in \mathcal C^1(X_{++}^{\varphi^\star} ;\R)$ such that $\nabla v\in \mathcal C( X_{++}^{\varphi^\star} ;D(L^\star))$ is called a \emph{classical solution} to \eqref{HJB} on  $X_{++}^{\varphi^\star}$ if 
\begin{equation*}
\rho v(x)=\langle x,L^\star\nabla v(x)\rangle + \mathcal{H}(\nabla v(x)), \ \ \ \ \forall x\in X_{++}^{\varphi^\star}.
\end{equation*}}
\end{definition}
\blu{We point out that this notion of solution for an HJB equation in infinite dimension is very demanding, because of the required regularity of the solution itself. Nonetheless, we will see in Section~\ref{sec:hjbexpl} that, under suitable assumptions, it is possible to find explicit solutions that verify the definition above. In more general cases, one can resort to other (weaker) notions, such as viscosity solutions (see Section~\ref{sec:conclusion} for comments and references on this point).}
\begin{remark} Notice that, if $v$ is a classical solution to HJB equation~\eqref{HJB}, then $\mathcal{H}(\nabla v(x))$ must be finite for every $x\in X_{++}^{\varphi^\star}$.
\end{remark}}

\section{Verification theorem, explicit solutions, and stability}\label{sec:hjb}
\blu{In this section we  focus  on the family of optimal control problems~\eqref{eq:pbPhi}. First, for each fixed $\varphi^\star \in {X^\star_{++}}$, we will provide a verification theorem in the general setting presented in Section~\ref{sec:model}. Then, specializing our setting and suitably choosing $\varphi^\star \in {X^\star_{++}}$, we will provide an explicit solution to problem~\eqref{eq:pbPhi} and  give a stability result for this solution. These results will be crucial to study our motivating economic application in Section~\ref{sec:app}.}

\subsection{Verification Theorem}\label{sec:ver}
{Typically, to prove a verification theorem for infinite horizon problems, a condition on the solution $v$ computed on the admissible trajectories when $t \to + \infty$ is needed.
\blu{Given $x_0 \in X$ and $\varphi^\star \in {X^\star_{++}}$, the (relaxed on integer numbers) condition that we shall use is}}
\begin{equation}\label{eq:trasvVFnew}
\lim_{k\in\mathbb{N},\  k \to +\infty} \de^{-\rho k} v(x^{x_0,c}(k))=0,
\quad \forall  c\in \mathcal{A}^{\varphi^\star}_{++}(x_0) \text{ s.t. } \blu{\mathcal{J}(c)}>-\infty,
\end{equation}

\begin{theorem}[Verification]\label{verification}
{Let $\varphi^\star \in {X^\star_{++}}$ and $x_0\in X_{++}^{\varphi^\star}$}. Let $v$ be a classical solution to \eqref{HJB} and {assume that} \eqref{eq:trasvVFnew} holds.
Then:
\begin{itemize}
\item[(i)]
$v(x_0)\geq V^{\varphi^\star}(x_0)$;\smallskip
\item[(ii)] If, moreover, there exists
$\hat c\in\mathcal{A}^{\varphi^\star}_{++}(x_0)$ such that, for a.e. $s \ge 0$,
\begin{equation}\label{argmax}
\mathcal{H}\left(\nabla v(x^{x_0,\hat c}(s))\right)
=\mathcal{H}_{CV}\left(\nabla v(x^{x_0,\hat c}(s));
\hat c(s)\right)
\quad\Longleftrightarrow\quad
N^\star\nabla v(x^{x_0,\hat c}(s))\in D^+\mathcal{U}(\hat c(s)), 
\end{equation}
where $D^+\mathcal{U}$ denotes the superdifferential of $\mathcal{U}$,
then $v(x_0)=V^{\varphi^\star}(x_0)$ and $\hat c$ is optimal for \eqref{eq:pbPhi} starting at $x_0$, i.e., $\blu{\mathcal{J}(\hat c)}=V^{\varphi^\star}(x_0)$.
\end{itemize}
\end{theorem}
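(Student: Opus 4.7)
The plan is to follow the classical verification scheme of dynamic programming, adapted to the present infinite-dimensional state-constrained setting. Fix $c \in \mathcal{A}^{\varphi^\star}_{++}(x_0)$ and abbreviate $x(\cdot) := x^{x_0,c}(\cdot)$; by admissibility, $x(t) \in X^{\varphi^\star}_{++}$ for every $t \geq 0$, so $v(x(t))$ is well defined and $\nabla v(x(t)) \in D(L^\star)$. The crux of the argument is the following Dynkin-type identity, valid for every $t \geq 0$:
\begin{equation*}
e^{-\rho t} v(x(t)) - v(x_0) = \int_0^t e^{-\rho r}\bigl[-\rho v(x(r)) + \langle x(r), L^\star \nabla v(x(r))\rangle - \langle Nc(r), \nabla v(x(r))\rangle\bigr]\,\d r.
\end{equation*}

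Once this identity is granted, the rest is routine. Substituting the HJB relation $\rho v(x(r)) = \langle x(r), L^\star \nabla v(x(r))\rangle + \mathcal{H}(\nabla v(x(r)))$ cancels the $L^\star$-terms, and the very definition of $\mathcal{H}$ as a supremum over $X_+$ gives $\mathcal{H}(\nabla v(x(r))) \geq \mathcal{U}(c(r)) - \langle Nc(r), \nabla v(x(r))\rangle$ a.e., with equality iff $c(r)$ attains that supremum, which is precisely \eqref{argmax}. Rearranging yields the master inequality
\begin{equation*}
v(x_0) \geq e^{-\rho t} v(x(t)) + \int_0^t e^{-\rho r}\mathcal{U}(c(r))\,\d r.
\end{equation*}
Evaluating at $t = k \in \mathbb{N}$ and sending $k \to +\infty$ proves (i): if $\mathcal{J}(c) = -\infty$ the conclusion $v(x_0) \geq \mathcal{J}(c)$ is trivial, otherwise \eqref{eq:trasvVFnew} absorbs the boundary term. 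For (ii), the master inequality turns into an equality along $\hat c$ by \eqref{argmax}; the sign condition of Assumption~\ref{ass:u} forces $\int_0^t e^{-\rho r}\mathcal{U}(\hat c(r))\,\d r$ to be monotone in $t$, so it converges in $\overline{\mathbb{R}}$ to $\mathcal{J}(\hat c)$, and a short case analysis on the sign of $\mathcal{U}$ combined with \eqref{eq:trasvVFnew} and part~(i) yields $v(x_0) = \mathcal{J}(\hat c) = V^{\varphi^\star}(x_0)$, proving the claimed optimality.

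The main obstacle, and the only genuinely infinite-dimensional step, is the proof of the Dynkin identity. Because $x(\cdot)$ is merely a mild/weak solution of \eqref{state}, it need not be $X$-differentiable, so the chain rule for $t \mapsto v(x(t))$ is not immediate. The regularity requirement $\nabla v \in \mathcal{C}(X^{\varphi^\star}_{++};D(L^\star))$ built into Definition~\ref{def:solHJB} is tailored precisely to make the weak formulation \eqref{weak} testable against the time-varying functional $\nabla v(x(r))$. The route I would follow is: first establish the identity for $x_0 \in D(L)$, where $x(\cdot)$ is a strict solution and the chain rule is classical; then extend it to arbitrary $x_0 \in X^{\varphi^\star}_{++}$ by approximation, e.g., using the Yosida regularisations $L_n := nL(n-L)^{-1}$ to produce smoother approximating trajectories, and pass to the limit using the continuity of $v$ and $\nabla v$ on $X^{\varphi^\star}_{++}$, the stability of mild solutions under such regularisations, and the openness of $X^{\varphi^\star}_{++}$ to ensure that the approximants remain in the domain of $v$ on each compact time interval.
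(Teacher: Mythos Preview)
Your proposal is correct and follows essentially the same verification scheme as the paper: derive the fundamental identity for $e^{-\rho t}v(x^{x_0,c}(t))$, substitute the HJB equation, use the definition of $\mathcal{H}$ to obtain the master inequality~\eqref{FEineq}, and pass to the limit along integers via the transversality condition~\eqref{eq:trasvVFnew} and the constant sign of $\mathcal{U}$. The only substantive difference is in how the Dynkin-type identity is justified: the paper simply invokes an infinite-dimensional chain rule from the literature (Li--Yong), whereas you outline a self-contained derivation via strict solutions for $x_0\in D(L)$ followed by Yosida approximation; your route is more explicit but arrives at the same place.
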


\begin{proof} (i)
Let $c\in\mathcal{A}^{\varphi^\star}_{++}(x_0)$ be such that $\blu{\mathcal{J}(c)}>-\infty$. By chain's rule in infinite dimension (see \cite{LY}), we have, for every $t\geq 0$,
$$
\frac{\d}{\d t} \big[\de^{-\rho t}v(x^{x_0,c}(t)))	\big]=
\de^{-\rho t}\big(-\rho  v(x^{x_0,c}(t))+\langle x^{x_0,c}(t),L^\star\nabla v(x^{x_0,c}(t)) \rangle-\langle Nc(t),\nabla v(x^{x_0,c}(t))\rangle  \big).
$$
Now we add  and subtract $\de^{-\rho t}\mathcal{U}(c(t))$ to the right hand side, use the fact that $v$ solves HJB, and integrate over $[0,t]$. We get, for every $t\geq 0$,
\begin{align*}
& \de^{-\rho t} v(x^{x_0,c}(t))+\int_0^t \de^{-\rho s}\mathcal{U}(c(s)) \, \d s
=
\\
& v(x_0)
+ \int_0^t \de^{-\rho s} \Big(-\mathcal{H}(\nabla v(x^{x_0,c}(s))+\mathcal{H}_{CV}(\nabla v(x^{x_0,c}(s);c(s))\Big) \, \d s,
\end{align*}
Observe that, since $\calu$ is concave (hence, sublinear from above) and $c(\cdot)\in \dL^1_{loc}(\R_+;X_+)$,
both sides of the above inequality are finite for every $t\ge 0$.
Rearranging the terms and taking into account \eqref{argmax} and
the definition of $\mathcal{H}$, 
we get, for every $t\geq 0$,
\begin{equation}\label{FEineq}
v(x_0) \geq \de^{-\rho t} v(x^{x_0,c}(t))+\int_0^t \de^{-\rho s} \mathcal{U}(c(s)) \, \d s.
\end{equation}
Since the sign of $\mathcal{U}$ is constant (cf. Assumption~\ref{ass:u}), we have that
\begin{equation}\label{uu}
\lim_{k\in\mathbb{N}, \ k \to +\infty}\int_0^k \de^{-\rho s} \mathcal{U}(c(s)) \, \d s=
\int_0^{\infty} \de^{-\rho s} \mathcal{U}(c(s)) \, \d s =
\blu{\mathcal{J}(c)}.
\end{equation}
Hence, passing to the limit in~\eqref{FEineq}, as $k\to\infty$, $k \in \mathbb N$, and using~\eqref{eq:trasvVFnew}, we deduce that
$$
v(x_0)\ge\blu{ \mathcal{J}(c)}.
$$
Then, given the arbitrariness of $c\in\mathcal{A}^{\varphi^\star}_{++}(x_0)$ such that $\blu{\mathcal{J}(c)}>-\infty$ and by definition of $V^{\varphi^\star}$, we immediately get the claim.\smallskip

(ii)
Notice that, by concavity of $\mathcal{U}$, \eqref{argmax} is equivalent to
\begin{equation}\label{argmaxbis}
\hat c(s)\in \argmax_{z\in X^{\varphi^\star}_{++}} \big\{\,\mathcal{U}(z)-\left\langle Nz,\nabla v(x^{x_0,\hat c}(s))\right\rangle\big\}, \ \ \ \mbox{for a.e.} \  s\geq 0,
\end{equation}
the usual closed loop condition for optimality.
Hence, for $c=\hat c$ we have equality in \eqref{FEineq} and therefore, passing to the limit, as $k\to\infty$, $k \in \mathbb N$, and using~\eqref{eq:trasvVFnew} and~\eqref{uu}, we get the equality
$$
v(x_0)= \blu{\mathcal{J}(\hat c)}.
$$
Since $\blu{\mathcal{J}(\hat c)}\le V^{\varphi^\star}(x_0)$ and since that the reverse inequality holds by part (i) of the theorem, the claim follows.
\end{proof}

\subsection{Explicit solutions to HJB equation and optimal feedback control}\label{sec:hjbexpl}
\blu{In this section we are going to provide an explicit solution to the HJB equation~\eqref{HJB} for problem~\eqref{eq:pbPhi}, for a specific choice of $\varphi^\star \in X^\star_{++}$. To this end, we need to specialize the setting of Section~\ref{sec:model} by introducing the following assumption that will be in force throughout this section.}
\begin{assumption}\label{ass:explicit}\blu{ $X$ is either the space $\dL^p(D, \mu)$, $p \in [1,+\infty)$, where $\mu$ is a $\sigma$-finite measure, or the space $\calc(D)$ of real-valued continuous functions on a compact metric space $D$, equipped with the sup-norm and with a Borel measure $\mu$. }

\blu{
Moreover, the linear operator $N$ in~\eqref{state} and the function $u$ of Assumption~\ref{ass:u} are explicitly given by
\begin{align}
[Nz](\theta)&=\eta(\theta)z(\theta),  \quad z \in X, \, \theta \in D, \label{eq:Nexpl}\\
u(\theta,\xi)&=\displaystyle{\frac{\xi^{1-\gamma}}{1-\gamma} f(\theta)},  \quad \theta \in D, \, \xi \in \R_+, \label{eq:uexpl}
\end{align}
where 
$\eta, f: D\to (0,+\infty)$ are measurable functions and $\gamma\in (0,1)\cup(1,+\infty)$ is a fixed parameter.}

\blu{Furthermore:
\begin{enumerate}[(i)]
\item\label{ass:eigenvect}
There exists an eigenvector ${b_0^\star}\in X^\star_{++}$ for  $L^\star \colon D(L^\star)\subseteq X^\star\to X^\star$  with eigenvalue $\lambda_0^\star\in \R$.
\item\label{ass:lambda0} $\rho>\lambda_0^\star(1-\gamma)$, where $\rho$ is the discount factor appearing in~\eqref{eq:functional}.
\item\label{ass:gamma>1} If $\gamma>1$, then  $\frac{({b_0^\star})^{1-\gamma}}{f}\in \dL^\infty(D,\mu;\R_+)$.
\item\label{ass:C} If $X = \calc(D)$, the strictly positive measure ${b_0^\star}$ is absolutely continuous with respect to $\mu$, with density still denoted by ${b_0^\star}$. In addition, $b_0^\star, \eta, f \in \calc(D; (0,+\infty))$. 
\item\label{ass:Lp} If $X = \dL^p(D,\mu)$, $\eta, f \in \dL^\infty(D,\mu;(0,+\infty))$ and, moreover,
\begin{equation}\label{eq:integr}
\int_D f(\theta)^{\frac{1}{\gamma}}(\eta(\theta){b_0^\star}(\theta))^{\frac{\gamma-1}{\gamma}} \mu(\d\theta)<\infty, \ \  \int_D\left(\frac{f(\theta)}{\eta(\theta){b_0^\star}(\theta)}\right)^{p/\gamma}\mu(\d\theta)<\infty.
\end{equation}
\end{enumerate}
}
\end{assumption}
\blu{
\begin{remark}\label{rem:expl}
Let us comment on the specific setting described above.
\begin{enumerate}[(i)]
\item It is crucial to assume the existence of the strictly positive eigenvector $b_0^\star \in X^\star_{++}$, as required by point~(\ref{ass:eigenvect}) of Assumption~\ref{ass:explicit}. Indeed, {this enables us to find an explicit solution to  HJB \eqref{HJB} associated to the auxiliary problem ($P^{b_0^\star}$) in the half-space $X^{b_0^\star}_{++}$.
\item In the case $X = \calc(D)$ with $D$ compact space, it is important that $b_0^\star$ is represented as a continuous function to guarantee well-posedness of the feedback operator $\Phi$ defined in \eqref{def:phi}, whence point~(\ref{ass:C}) of Assumption~\ref{ass:explicit}.}
\item\label{rem:Vfinite} {Point~(\ref{ass:lambda0}) of Assumption~\ref{ass:explicit}} is needed to ensure finiteness of the solution of the HJB equation, and hence of the value function.
\item {Point~(\ref{ass:gamma>1}) of Assumption~\ref{ass:explicit}} is required to verify \eqref{eq:trasvVFnew} in Lemma~\ref{lm:trasv}.
\item\label{rem:etauhyp} {Given the expression of $u$ and since $\eta$ and $f$, appearing in~\eqref{eq:Nexpl} and~\eqref{eq:uexpl}, respectively, are positive, point~\textit{(\ref{hyp:N})} of Assumption~\ref{ass:L} and Assumption~\ref{ass:u} are verified.}
\item {In the case $X=\dL^p(D,\mu)$, the functions $\eta$ and $f$} do not need to be continuous, but only essentially bounded. However, additional integrability conditions must hold: the first requirement of~\eqref{eq:integr} allows to give sense to the explicit solution of HJB; the second condition of~\eqref{eq:integr} is used to make sense of the optimal feedback map. Both conditions are automatically verified when $X = \calc(D)$ with $D$ compact, due to compactness of $D$.
\end{enumerate}
\end{remark}}
%
\blu{We are now ready to provide the explicit solution to~\eqref{HJB} for the auxiliary problem~($P^{b_0^\star}$).}
\begin{proposition}\label{prop:exp}
  The function
\begin{equation}\label{solution}
v(x) \coloneqq \alpha\frac{\langle x,{b_0^\star}\rangle^{1-\gamma}}{1-\gamma}, \quad x\in X^{b_0^\star}_{++},
\end{equation}
where
\begin{equation}\label{alpha}
\alpha \coloneqq \gamma^\gamma\left(\frac{\int_D f(\theta)^{\frac{1}{\gamma}}(\eta(\theta){b_0^\star}(\theta))^{\frac{\gamma-1}{\gamma}} \mu(\d\theta)}{\rho-\lambda^\star_0(1-\gamma)}\right)^\gamma,
\end{equation}
is a classical solution to~\eqref{HJB}.
\end{proposition}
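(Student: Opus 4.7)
The plan is to verify the ansatz \eqref{solution} directly in the HJB equation by computing the three ingredients separately: the Fr\'echet derivative $\nabla v$, the drift term $\langle x,L^\star\nabla v(x)\rangle$, and the Hamiltonian $\mathcal{H}(\nabla v(x))$. Write $\beta(x) \coloneqq \alpha\langle x,b_0^\star\rangle^{-\gamma}$, so that the chain rule for the composition $x\mapsto \tfrac{\alpha}{1-\gamma}\,\phi(\langle x,b_0^\star\rangle)$ with $\phi(s)=s^{1-\gamma}$ gives
\[
\nabla v(x)=\beta(x)\,b_0^\star,\qquad x\in X_{++}^{b_0^\star}.
\]
The required regularity then follows because $\langle \cdot,b_0^\star\rangle$ is a continuous linear functional which is strictly positive on $X_{++}^{b_0^\star}$, so $x\mapsto \beta(x)$ is continuous (indeed $C^\infty$) on $X_{++}^{b_0^\star}$; moreover $b_0^\star\in D(L^\star)$ by point~(\ref{ass:eigenvect}) of Assumption~\ref{ass:explicit}, hence $\nabla v\in\mathcal{C}(X_{++}^{b_0^\star};D(L^\star))$, matching Definition~\ref{def:solHJB}. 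The drift term is immediate from the eigenvector property $L^\star b_0^\star=\lambda_0^\star b_0^\star$:
\[
\langle x,L^\star\nabla v(x)\rangle =\beta(x)\lambda_0^\star\langle x,b_0^\star\rangle=\lambda_0^\star(1-\gamma)\,v(x).
\]

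The main step is the explicit computation of $\mathcal{H}(\nabla v(x))$. In both cases of Assumption~\ref{ass:explicit} ($X=\dL^p$ with $b_0^\star\in\dL^q$, or $X=\calc(D)$ with $b_0^\star$ a measure having a continuous positive density, still denoted $b_0^\star$, with respect to $\mu$), the duality pairing is realized by the integral $\langle Nz,\nabla v(x)\rangle=\beta(x)\int_D \eta(\theta)b_0^\star(\theta)z(\theta)\,\mu(\d\theta)$. Thus one has
\[
\mathcal{H}(\nabla v(x))=\sup_{z\in X_+}\int_D\!\!\Big[\tfrac{z(\theta)^{1-\gamma}}{1-\gamma}f(\theta)-\beta(x)\eta(\theta)b_0^\star(\theta)z(\theta)\Big]\mu(\d\theta).
\]
I would maximize pointwise: for fixed $\theta$ the optimal $\xi\geq 0$ is
\[
\hat z(\theta)=\left(\tfrac{f(\theta)}{\beta(x)\eta(\theta)b_0^\star(\theta)}\right)^{1/\gamma},
\]
and the pointwise value equals $\tfrac{\gamma}{1-\gamma}\,\beta(x)^{(\gamma-1)/\gamma} f(\theta)^{1/\gamma}(\eta(\theta)b_0^\star(\theta))^{(\gamma-1)/\gamma}$. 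Integration then yields
\[
\mathcal{H}(\nabla v(x))=\tfrac{\gamma}{1-\gamma}\,\alpha^{(\gamma-1)/\gamma}\langle x,b_0^\star\rangle^{1-\gamma}\,I,\qquad I\coloneqq\!\int_D f(\theta)^{1/\gamma}(\eta(\theta)b_0^\star(\theta))^{(\gamma-1)/\gamma}\mu(\d\theta),
\]
which is finite by the first condition in~\eqref{eq:integr} (and automatic when $X=\calc(D)$ by continuity and compactness). To justify the exchange of sup and integral one has to check that $\hat z$ actually belongs to $X_+$: in the case $X=\calc(D)$ this is immediate from point~(\ref{ass:C}), while in the case $X=\dL^p$ it follows exactly from the second condition in~\eqref{eq:integr}, which ensures $\int_D \hat z^p\,\d\mu<\infty$. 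This matching between the ansatz and the integrability assumptions is the key point and the only real subtlety of the argument; once the candidate $\hat z$ is admissible, the pointwise bound on the integrand and concavity of the map inside the integral guarantee that $\hat z$ is indeed the argmax.

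Finally, plugging the expressions for $\langle x,L^\star\nabla v(x)\rangle$ and $\mathcal{H}(\nabla v(x))$ into the right-hand side of~\eqref{HJB} and using the ansatz gives the scalar identity
\[
(\rho-\lambda_0^\star(1-\gamma))\,\alpha=\gamma\,\alpha^{(\gamma-1)/\gamma}\,I,
\]
i.e.\ $\alpha^{1/\gamma}=\gamma I/(\rho-\lambda_0^\star(1-\gamma))$, which is well posed and positive thanks to point~(\ref{ass:lambda0}) of Assumption~\ref{ass:explicit} and is exactly the definition~\eqref{alpha} of $\alpha$. Hence~\eqref{HJB} is satisfied on all of $X_{++}^{b_0^\star}$, and $v$ is a classical solution in the sense of Definition~\ref{def:solHJB}.
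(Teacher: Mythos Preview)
Your proposal is correct and follows essentially the same route as the paper: compute $\nabla v(x)=\alpha\langle x,b_0^\star\rangle^{-\gamma}b_0^\star$, use the eigenvector relation $L^\star b_0^\star=\lambda_0^\star b_0^\star$ for the drift term, find the optimizer $\hat z$ in the Hamiltonian, and reduce \eqref{HJB} to the scalar equation for $\alpha$. The only cosmetic difference is that you compute $\mathcal{H}(\nabla v(x))$ by pointwise maximization of the integrand and then check $\hat z\in X_+$ via the integrability conditions, whereas the paper phrases the same computation through the Fr\'echet derivative $\nabla\mathcal{U}$ and the first-order condition $\nabla\mathcal{U}(z)=N^\star\nabla v(x)$; your version is in fact slightly more explicit about why the candidate maximizer is admissible.
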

\begin{proof}
\blu{We point out, first, that the arguments provided here are the same regardless of the choice of $X$ in Assumption~\ref{ass:explicit}. }

\blu{We start noticing that $v$ is differentiable, as composition of a linear form with a power function, and that
\begin{equation*}
\nabla v(x)=\alpha \langle x,{b_0^\star}\rangle ^{-\gamma} {b_0^\star}, \quad x\in X^{b_0^\star}_{++}.
\end{equation*}
Due to Assumption~\ref{ass:explicit}~\textit{(\ref{ass:eigenvect})}, and since $\alpha > 0$ by point~\textit{(\ref{ass:lambda0})} and either point~\textit{(\ref{ass:C})} or point~\textit{(\ref{ass:Lp})} of Assumption~\ref{ass:explicit}, $v$ enjoys the regularity required by Definition~\ref{def:solHJB}.
Next, observe that by~\eqref{eq:uexpl} and by Assumption~\ref{ass:explicit} --- either point~\textit{(\ref{ass:C})} or point~\textit{(\ref{ass:Lp})} ---, the functional $\mathcal{U}$ is Fr\'echet differentiable and
\begin{equation}\label{eq:gradU}
[\nabla \mathcal{U}(z)](\theta)=f(\theta) z(\theta)^{-\gamma}, \quad z\in X_+, \, \theta \in D.
\end{equation}
The adjoint of the operator $N$, given in~\eqref{eq:Nexpl}, is
\begin{align*}
&N^\star q^\star(\theta) = \eta(\theta) q^\star(\theta), \quad \theta \in D, & &\text{when } X = \dL^p(D,\mu),
\\
&N^\star q^\star(\d\theta) = \eta(\theta) q^\star(\d\theta), \quad \theta \in D, & &\text{when } X = \cC(D).
\end{align*}
Therefore, taking into account point~\textit{(\ref{ass:C})} of Assumption~\ref{ass:explicit}, we get (regardless of the choice of $X$)
\begin{equation}\label{eq:Nstar}
N^\star \nabla v(x) = \alpha \langle x,{b_0^\star}\rangle ^{-\gamma} \eta(\cdot) {b_0^\star}, \quad x\in X^{b_0^\star}_{++}.
\end{equation}
Putting together~\eqref{eq:gradU} and~\eqref{eq:Nstar}, we have
\begin{equation*}
\frac{\partial \mathcal{H}_{CV}}{\partial z} (\nabla v(x);z)= \nabla \mathcal{U}(z)-N^{\star}\nabla v(x)= f(\cdot)z^{-\gamma}- \alpha \langle x,{b_0^\star}\rangle^{-\gamma} \eta(\cdot) b_0^\star(\cdot), \quad x\in X^{b_0^\star}_{++}, \quad z \in X_+.
\end{equation*}
Since $z\mapsto \mathcal{H}_{CV}(\nabla v(x);z)$ is strictly concave, for each fixed $x\in X^{b_0^\star}_{++}$,
the unique maximum point $\hat{z}(x)$  of this function is provided by 
\begin{equation}\label{optimizer}
\hat{z}(x)=\argmax_{z\in X_{+}}\ \mathcal{H}_{CV}(\nabla v(x);z)=\left(\frac{f(\cdot)}{\alpha \eta(\cdot)b_0^\star(\cdot)}\right)^{\frac{1}{\gamma}}\langle x,{b_0^\star}\rangle, \quad x\in X^{b_0^\star}_{++}, 
\end{equation}
whence
\begin{equation*}
\mathcal{H}(\nabla v(x)) = \mathcal{H}_{CV}(\nabla v(x);\hat{z}(x)) =	\frac{\gamma \langle x,{b_0^\star}\rangle^{1-\gamma}}{1-\gamma}\int_D f(\theta)^{\frac{1}{\gamma}}(\alpha \eta(\theta)b_0^\star(\theta))^{\frac{\gamma-1}{\gamma}} \mu(\d\theta), \quad x\in X^{b_0^\star}_{++}.
\end{equation*}
}
\blu{Plugging the latter expression into \eqref{HJB}, we get the algebraic equation in $\alpha$
$$\frac{\rho}{1-\gamma} \alpha =\lambda_0^\star \alpha+\frac{\gamma}{(1-\gamma)}\alpha^{\frac{\gamma-1}{\gamma}}\int_D f(\theta)^{\frac{1}{\gamma}}(\eta(\theta){b_0^\star}(\theta))^{\frac{\gamma-1}{\gamma}} \mu(\d\theta),$$
which has a unique positive solution  provided by \eqref{alpha}.}
\end{proof}

\blu{\begin{remark}
\label{rem:b0changes}
It is easy to verify that the function $v$ does not change if we multiply $b_0^\star$ by any strictly positive constant.  This is consistent with the fact that, if $k > 0$ and  $\varphi^\star = k b_0^\star$, then $X^{\varphi^\star}_{++}=X^{b_0^\star}_{++}$; hence, $\cala^{\varphi^\star}_{++}(x_0)=\cala^{b_{0}^\star}_{++}(x_0)$.
\end{remark}}

\blu{The following map provides the unique maximum  point given in \eqref{optimizer} as a function of the state:
\begin{equation}\label{def:phi}
[\Phi x](\theta)=\hat{z}(x)(\theta)=
\left(
\frac{f(\theta)}{\alpha \eta(\theta){b_0^\star}(\theta)}
\right)^{\frac{1}{\gamma}}
\langle x, {b_0^\star}\rangle, \quad x \in X, \, \theta \in D.
\end{equation}
It is important to notice that $\Phi \colon X \to X$ is a bounded and positive linear operator. 
This is the natural candidate to be the optimal feedback map for problem $(P^{b_0^\star})$, i.e., the map that will provide (as we will show) the optimal feedback control for this problem, given any initial condition $x_0\in X_{++}$.
Therefore, we introduce the \emph{closed loop equation} for problem $(P^{b_0^\star})$:
\begin{equation}\label{cle}
\left\{
\begin{aligned}
&x'(t)=Lx(t)-N\Phi x(t), \quad t \geq 0, \\
&x(0)=x_0 \blu{\, \in X}.
\end{aligned}
\right.
\end{equation}
This linear equation admits a unique mild solution, {that will be denoted by} $\hat{x}^{x_0}(\cdot)$, which is also a weak solution (see \cite[p. 129-131]{BDDM}). 
Notice that the operator $N\Phi \colon X\to X$, {appearing} in \eqref{cle}, is {explicitly} given by:
\begin{equation}\label{def:Nphi}
[N\Phi x](\theta)=
\eta(\theta)\left(
\frac{f(\theta)}{\alpha \eta(\theta){b_0^\star}(\theta)}
\right)^{\frac{1}{\gamma}}
\langle x, {b_0^\star}\rangle, \quad x \in X, \, \theta \in D.
\end{equation}}
\blu{To study \eqref{cle}, let us} define the operator
\begin{equation}\label{eq:Bdef}
B \coloneqq L- N\Phi.
\end{equation}
Since $B$ is a bounded perturbation of $L$, by \citep[Ch. III, Theorem~1.3]{engelnagel:sgr} we have that it is a closed operator with domain $D(B)=D(L)$ and {that it} generates a $C_0$-semigroup $\{\de^{tB}\}_{t\geq 0}$ on $X$.
\blu{The next lemma will be useful in the sequel.}
\begin{lemma} 
\label{lm:gautov} We have the following.
\begin{enumerate}[(i)]
\item[]
\item[(i)] The adjoint operator 
$(N\Phi)^\star:X^{\star}\to X^{\star}$  is explicitly given by:
{\allowdisplaybreaks
\begin{align}
\label{def:Nphiadjoint} (N\Phi)^\star q &=\alpha^{-\frac{1}{\gamma}}
\left(\int_{D}f(\theta)^{\frac{1}{\gamma}}
\eta(\theta)^{1-\frac{1}{\gamma}}
b_0^\star(\theta)^{-\frac{1}{\gamma}}q(\d\theta)
\right)b_0^\star, & &\text{when } X=\mathcal{C}(D),
\\
\label{def:Nphiadjointbis}
(N\Phi)^\star q &=\alpha^{-\frac{1}{\gamma}}
\left(\int_{D}f(\theta)^{\frac{1}{\gamma}}
\eta(\theta)^{1-\frac{1}{\gamma}}
b_0^\star(\theta)^{-\frac{1}{\gamma}}q(\theta)\mu(\d\theta)
\right)b_0^\star, & &\text{when } X=\dL^p(D,\mu).
\end{align}
}
\item[(ii)]
Define
\begin{equation}\label{g}
g\coloneqq\frac{\lambda_0^\star-\rho}{\gamma}.
\end{equation}
Then $g$ is an eigenvalue of $B^\star$ and
$b_0^\star$ is an eigenvector of $B^\star$ associated to $g$.
\end{enumerate}
\end{lemma}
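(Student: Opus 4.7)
The plan rests on the observation that $N\Phi$ has rank-one structure. Indeed, from \eqref{def:Nphi} one may write
\begin{equation*}
N\Phi x = \langle x, b_0^\star\rangle\, h, \qquad h(\theta) := \alpha^{-1/\gamma}\, f(\theta)^{1/\gamma}\,\eta(\theta)^{(\gamma-1)/\gamma}\, b_0^\star(\theta)^{-1/\gamma}.
\end{equation*}
A preliminary sanity check is that $h \in X$: in the case $X = \calc(D)$ this follows from continuity and strict positivity of $f,\eta,b_0^\star$ on the compact set $D$ (point~\textit{(\ref{ass:C})} of Assumption~\ref{ass:explicit}); in the case $X = \dL^p(D,\mu)$ it is precisely the second integrability condition of \eqref{eq:integr}. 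Once $h \in X$ is secured, both parts of the lemma reduce to pure algebra.

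For part (i), the adjoint of any bounded rank-one operator of the form $x \mapsto \langle x, b_0^\star\rangle\, h$ is the rank-one operator $q \mapsto \langle h, q\rangle\, b_0^\star$, as one sees at once from
\begin{equation*}
\langle N\Phi x, q\rangle = \langle x, b_0^\star\rangle\,\langle h, q\rangle = \langle x,\, \langle h, q\rangle\, b_0^\star\rangle, \qquad x \in X,\ q \in X^\star.
\end{equation*}
It then suffices to realize the dual pairing $\langle h, q\rangle$ according to the two cases --- integration of $h$ against a regular Borel measure $q$ when $X = \calc(D)$, and integration of $h\cdot q$ against $\mu$ when $X = \dL^p(D,\mu)$ --- and substitute the explicit expression for $h$; this produces directly \eqref{def:Nphiadjoint} and \eqref{def:Nphiadjointbis}.

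For part (ii), boundedness of $N\Phi$ entails boundedness of $(N\Phi)^\star$, hence $D(B^\star) = D(L^\star)$ and in particular $b_0^\star \in D(B^\star)$. Applying part (i) with $q = b_0^\star$ and using the explicit form of $h$ gives
\begin{equation*}
(N\Phi)^\star b_0^\star = \alpha^{-1/\gamma}\, I\, b_0^\star, \qquad I := \int_D f(\theta)^{1/\gamma}\bigl(\eta(\theta)\, b_0^\star(\theta)\bigr)^{(\gamma-1)/\gamma}\,\mu(\d\theta).
\end{equation*}
The crucial point is that $I$ is exactly the integral entering the defining formula \eqref{alpha} for $\alpha$; inverting that formula yields $\alpha^{1/\gamma} = \gamma I / (\rho - \lambda_0^\star(1-\gamma))$, whence $(N\Phi)^\star b_0^\star = [(\rho - \lambda_0^\star(1-\gamma))/\gamma]\, b_0^\star$. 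Combining with $L^\star b_0^\star = \lambda_0^\star b_0^\star$ and simplifying gives $B^\star b_0^\star = [(\lambda_0^\star - \rho)/\gamma]\, b_0^\star = g\, b_0^\star$, as claimed. There is no substantive obstacle: once the rank-one structure of $N\Phi$ is made explicit, the conclusion amounts to recognizing that $\alpha$ was chosen in Proposition~\ref{prop:exp} precisely so that this eigenvalue relation would hold.
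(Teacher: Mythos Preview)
Your proof is correct and follows essentially the same approach as the paper's. The paper dismisses part~(i) as ``straightforward computations'' and for part~(ii) writes out exactly the same chain $B^\star b_0^\star = \lambda_0^\star b_0^\star - \frac{\rho-\lambda_0^\star(1-\gamma)}{\gamma} b_0^\star = g\, b_0^\star$; your rank-one presentation of $N\Phi$ and the explicit inversion of \eqref{alpha} simply flesh out what the authors leave implicit.
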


\begin{proof}
The proof of (i) follows from straightforward computations.
For the proof of (ii) we observe that, using (i), \eqref{alpha}, and \eqref{g}, we get 
\begin{equation*}
B^\star b_0^\star =L^\star b_0^\star - (N\Phi)^\star b_0^\star
=\lambda_0^\star b_0^\star -\frac{\rho-\lambda_0^\star(1-\gamma)}{\gamma}
b_0^\star =
\left[\lambda_0^\star-\frac{\rho-\lambda_0^\star(1-\gamma)}{\gamma}\right]
b_0^\star=gb_0^\star. \qedhere
\end{equation*}
\end{proof}

\begin{lemma}
\label{lm:gautovdyn}
Let $x_0\in X_{++}^{{b_0^\star}}$ and let $\hat{x}^{x_0}$ be the unique mild (weak) solution to \eqref{cle}. Then,
\begin{equation}\label{eq:hatx0expl}
\langle  \hat{x}^{x_0}(t),{b_0^\star}\rangle=\langle x_0,{b_0^\star}\rangle\,  \de^{gt}, \ \ \ \forall t\geq0,
\end{equation}
where $g$ is given by \eqref{g}.
Hence, in particular,
\begin{equation}\label{admissibility}
\hat{x}^{x_0}(t)\in X_{++}^{{b_0^\star}}, \ \ \ \ \ \forall t \geq 0.
\end{equation}
\end{lemma}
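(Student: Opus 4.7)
The plan is to test the weak formulation of the closed loop equation \eqref{cle} against the eigenvector $b_0^\star$, reducing the infinite-dimensional dynamics to a scalar linear ODE on the real-valued quantity $y(t):=\langle \hat{x}^{x_0}(t),b_0^\star\rangle$. Since $b_0^\star\in D(L^\star)$ by point~\textit{(\ref{ass:eigenvect})} of Assumption~\ref{ass:explicit}, and $N\Phi$ is a bounded operator (so $D(B^\star)=D(L^\star)$), $b_0^\star$ is an admissible test function in the weak formulation \eqref{weak} applied to the trajectory $\hat{x}^{x_0}(\cdot)$ with control $c(s)=\Phi\hat{x}^{x_0}(s)$.

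Writing \eqref{weak} in this way, the claim reduces to computing the two boundary actions on $b_0^\star$: on the one hand, $L^\star b_0^\star=\lambda_0^\star b_0^\star$ by assumption; on the other hand, by duality, $\langle N\Phi \hat{x}^{x_0}(s),b_0^\star\rangle=\langle \hat{x}^{x_0}(s),(N\Phi)^\star b_0^\star\rangle$, and the explicit formulas \eqref{def:Nphiadjoint}--\eqref{def:Nphiadjointbis} of Lemma~\ref{lm:gautov}~(i), combined with the definition of $\alpha$ in \eqref{alpha}, give $(N\Phi)^\star b_0^\star=\frac{\rho-\lambda_0^\star(1-\gamma)}{\gamma}b_0^\star$ (this is exactly the identity already used inside the proof of Lemma~\ref{lm:gautov}~(ii)). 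Substituting both, the weak formulation becomes
\begin{equation*}
y(t)=y(0)+\int_0^t\!\Bigl(\lambda_0^\star-\tfrac{\rho-\lambda_0^\star(1-\gamma)}{\gamma}\Bigr)y(s)\,\d s=y(0)+g\int_0^t y(s)\,\d s,
\end{equation*}
where the coefficient simplifies to $g=\frac{\lambda_0^\star-\rho}{\gamma}$ as in \eqref{g} (this is the Perron--Frobenius identity $B^\star b_0^\star=gb_0^\star$ of Lemma~\ref{lm:gautov}~(ii) read on the trajectory).

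Since $y$ is continuous (because $t\mapsto \hat{x}^{x_0}(t)$ is continuous in $X$ and $b_0^\star\in X^\star$), the integral equation above is equivalent to the scalar Cauchy problem $y'(t)=g\,y(t)$, $y(0)=\langle x_0,b_0^\star\rangle$, whose unique solution is $y(t)=\langle x_0,b_0^\star\rangle\,\de^{gt}$. This yields \eqref{eq:hatx0expl}. Finally, $x_0\in X_{++}^{b_0^\star}$ means $\langle x_0,b_0^\star\rangle>0$, and since $\de^{gt}>0$ for every $t\ge 0$, we conclude $\langle \hat{x}^{x_0}(t),b_0^\star\rangle>0$ for all $t\ge 0$, i.e., \eqref{admissibility}. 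No real obstacle arises: the only point that requires minor care is the admissibility of $b_0^\star$ as a test function in \eqref{weak}, which is guaranteed by $b_0^\star\in D(L^\star)$.
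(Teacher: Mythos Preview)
Your proof is correct and follows essentially the same approach as the paper: test the closed loop equation against $b_0^\star\in D(L^\star)$, use Lemma~\ref{lm:gautov} to recognize that $B^\star b_0^\star=gb_0^\star$, reduce to the scalar ODE $y'(t)=g\,y(t)$, and conclude. The paper invokes Lemma~\ref{lm:gautov}~(ii) directly rather than re-expanding $(N\Phi)^\star b_0^\star$ via~\eqref{alpha}, and writes the differential rather than the integral form of the weak equation, but these are only cosmetic differences.
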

\begin{proof}
Testing the closed loop equation \eqref{cle} against ${b_0^\star}\in D(L^\star)$, by Lemma~\ref{lm:gautov} we get
\begin{equation*}
\frac{\d}{\d t}	\langle  \hat{x}^{x_0}(t),{b_0^\star}\rangle =\langle \hat{x}^{x_0}(t),L^\star{b_0^\star}\rangle-\langle N\Phi\hat{x}^{x_0}(t),{b_0^\star}\rangle
=\langle \hat{x}^{x_0}(t),(L^\star-(N\Phi)^*){b_0^\star}\rangle
= g\langle \hat{x}^{x_0}(t),{b_0^\star}\rangle.
\end{equation*}
{The result follows by integrating this equation over $[0,t]$ and noticing that $\langle x_0,{b_0^\star}\rangle > 0$.}
\end{proof}

\begin{Lemma}
\label{lm:trasv}
Let $x_0\in X^{{b_0^\star}}_{++}$ and let $v$, defined in \eqref{solution}, be the solution to \eqref{HJB} associated to $(P^{b_0^\star})$. Then,
 \begin{equation}\label{eq:trasvVFnew2}
\lim_{k\in\mathbb{N},\, k \to +\infty} \de^{-\rho k} v(x^{x_0,c}(k))=0,
\ \ \ \ \forall  c\in \mathcal{A}^{{b_0^\star}}_{++}(x_0) \text{ s.t. } \ \blu{\mathcal{J}(c)}>-\infty.
\end{equation}
\end{Lemma}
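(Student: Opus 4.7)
The plan is to treat the cases $\gamma\in(0,1)$ and $\gamma>1$ separately, since in the former $v\geq 0$ on $X_{++}^{b_0^\star}$ while in the latter $v\leq 0$, leading to quite different arguments. Throughout, set $Y(t):=\langle x^{x_0,c}(t),b_0^\star\rangle$ and $\delta:=\rho-\lambda_0^\star(1-\gamma)>0$ (by Assumption~\ref{ass:explicit}(\ref{ass:lambda0})).

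\textbf{Case $\gamma\in(0,1)$.} No use of $\mathcal{J}(c)>-\infty$ is needed. By positivity of $\{\de^{tL}\}_{t\geq 0}$ and of $N$, the mild formula \eqref{mild} yields $x^{x_0,c}(t)\leq_X \de^{tL}x_0$; pairing with $b_0^\star\in X^\star_+$ and using the identity $\de^{tL^\star}b_0^\star=\de^{\lambda_0^\star t}b_0^\star$ gives $Y(t)\leq \de^{\lambda_0^\star t}\langle x_0,b_0^\star\rangle$. Raising to the $(1-\gamma)$-th power (which preserves the inequality as $1-\gamma>0$) and multiplying by $\alpha \de^{-\rho t}/(1-\gamma)$ produces
$$0\leq \de^{-\rho t}v(x^{x_0,c}(t))\leq \frac{\alpha\langle x_0,b_0^\star\rangle^{1-\gamma}}{1-\gamma}\de^{-\delta t},$$
which vanishes as $t\to+\infty$ (not merely along integers).

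\textbf{Case $\gamma>1$.} Here I exploit $\mathcal{J}(c)>-\infty$ through a reverse H\"older estimate. Testing the weak equation \eqref{weak} against $b_0^\star\in D(L^\star)$ shows that $y(t):=\de^{-\lambda_0^\star t}Y(t)$ satisfies $y'(t)=-\de^{-\lambda_0^\star t}\langle Nc(t),b_0^\star\rangle\leq 0$; hence $y$ is positive and non-increasing, $y_\infty\geq 0$ exists, and
$$Y(t) = \de^{\lambda_0^\star t}y(t) \geq \int_t^\infty\!\int_D \de^{\lambda_0^\star(t-s)}\eta(\theta)b_0^\star(\theta)c(s,\theta)\,\mu(\d\theta)\,\d s. \qquad (\star)$$
Next, I apply the reverse H\"older inequality on $([t,+\infty)\times D,\,\d s\otimes\mu)$ with conjugate exponents $p:=1-\gamma<0$ and $q:=(\gamma-1)/\gamma\in(0,1)$, factoring the integrand in $(\star)$ as $AB$ where
$$A(s,\theta):=\de^{\rho s/(\gamma-1)}f(\theta)^{-1/(\gamma-1)}c(s,\theta),\quad B(s,\theta):=\de^{\lambda_0^\star t-\delta s/(\gamma-1)}\eta(\theta)b_0^\star(\theta)f(\theta)^{1/(\gamma-1)}$$
(note that $(\rho-\delta)/(\gamma-1)=-\lambda_0^\star$, so $AB=\de^{\lambda_0^\star(t-s)}\eta b_0^\star c$). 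Direct computation gives $\iint A^{1-\gamma}\,\d\mu\,\d s=(\gamma-1)I(t)$, with $I(t):=\int_t^\infty \de^{-\rho s}|\mathcal{U}(c(s))|\,\d s$, and, via the identity $\lambda_0^\star(\gamma-1)-\delta=-\rho$,
$$\iint B^{(\gamma-1)/\gamma}\,\d\mu\,\d s=\frac{\gamma A}{\delta}\,\de^{-\rho t/\gamma}, \qquad A:=\int_D f^{1/\gamma}(\eta b_0^\star)^{(\gamma-1)/\gamma}\,\mu(\d\theta)<\infty.$$

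Plugging these into the reverse H\"older bound and raising the result to the power $1-\gamma$ (which reverses the inequality) yields, after multiplying by $\de^{-\rho t}$,
$$\de^{-\rho t}Y(t)^{1-\gamma}\leq (\gamma-1)\bigl(\delta/(\gamma A)\bigr)^\gamma I(t);$$
exploiting $\alpha=\gamma^\gamma(A/\delta)^\gamma$, this collapses to the clean estimate
$$\bigl|\de^{-\rho t}v(x^{x_0,c}(t))\bigr|=\frac{\alpha}{\gamma-1}\de^{-\rho t}Y(t)^{1-\gamma}\leq I(t).$$
Since $\mathcal{J}(c)>-\infty$ is equivalent to $I(0)=|\mathcal{J}(c)|<\infty$, the monotone tail $I(t)\to 0$ as $t\to+\infty$, yielding \eqref{eq:trasvVFnew2} along every $t\to+\infty$ (a fortiori along integers). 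The main obstacle is identifying the correct factorization $AB$ in the $\gamma>1$ case: the factors must arrange $A^{1-\gamma}$ to reproduce the utility density $\de^{-\rho s}fc^{1-\gamma}$ (so that $\mathcal{J}(c)>-\infty$ enters), while the $s$- and $t$-exponentials in $B^{(\gamma-1)/\gamma}$ balance through the algebraic identity $\lambda_0^\star(\gamma-1)=\delta-\rho$; the role of Assumption~\ref{ass:explicit}(\ref{ass:gamma>1}) is, implicitly, to guarantee that $c(s,\theta)>0$ $(\d s\otimes\mu)$-a.e.\ on the support where the reverse H\"older inequality is applied (otherwise $I(t)=+\infty$).
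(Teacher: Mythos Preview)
Your argument for $\gamma\in(0,1)$ coincides with the paper's. For $\gamma>1$ you take a genuinely different route, and your proof is correct.

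\textbf{Comparison.} The paper works on integer intervals $[k,k+1]$: it first bounds $\int_k^{k+1}\langle c(s),b_0^\star\rangle\,\d s$ above by a multiple of $Y(k)$, then applies Jensen's inequality to $\xi\mapsto\xi^{1-\gamma}/(1-\gamma)$, and finally uses Assumption~\ref{ass:explicit}(\ref{ass:gamma>1}) (the bound $\bigl|(b_0^\star)^{1-\gamma}/f\bigr|_\infty<\infty$) to dominate $\int_D \frac{(c b_0^\star)^{1-\gamma}}{1-\gamma}\,\d\mu$ from below by a multiple of $\mathcal U(c(s))$, so that $\mathcal J(c)>-\infty$ forces the discrete tails to vanish. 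Your argument is a single continuous-time estimate: starting from the tail inequality $Y(t)\geq\int_t^\infty\de^{\lambda_0^\star(t-s)}\langle Nc(s),b_0^\star\rangle\,\d s$, you apply the reverse H\"older inequality with the well-chosen factorization $AB$ to obtain the sharp bound $|\de^{-\rho t}v(x^{x_0,c}(t))|\leq I(t)$, and conclude via $I(t)\to0$. Two benefits of your route: (a) you obtain the limit along every $t\to+\infty$, not only along integers; (b) your argument does \emph{not} use Assumption~\ref{ass:explicit}(\ref{ass:gamma>1}) at all---the only ingredients are $\delta>0$ and the finiteness of $\int_D f^{1/\gamma}(\eta b_0^\star)^{(\gamma-1)/\gamma}\,\mu(\d\theta)$, both already implicit in the definition of $\alpha$.

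\textbf{One correction.} Your closing remark misidentifies the role of Assumption~\ref{ass:explicit}(\ref{ass:gamma>1}). The positivity $c(s,\theta)>0$ $(\d s\otimes\mu)$-a.e.\ (needed so that $\int A^{1-\gamma}<\infty$ and reverse H\"older is non-vacuous) follows directly from $\mathcal J(c)>-\infty$ and the fact that $f>0$ everywhere: if $c$ vanished on a set of positive product measure, Fubini would give a positive-measure set of times $s$ for which $\mathcal U(c(s))=-\infty$, forcing $\mathcal J(c)=-\infty$. Assumption~\ref{ass:explicit}(\ref{ass:gamma>1}) plays no part in your proof; it is genuinely needed only in the paper's Jensen-based argument.
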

\begin{proof}
\emph{Case 1: $\gamma\in(0,1)$.}  In this case $v$ is nonnegative {and this allows to prove a stronger result, namely}
\begin{equation}\label{qqq}
\lim_{t \to +\infty} \de^{-\rho t} v(x^{x_0,c}(t))=0,
\ \ \ \ \forall  c\in \mathcal{A}^{{b_0^\star}}_{++}(x_0).
\end{equation}
{Recalling that $x^{x_0,c}$ is a weak solution to the state equation~\eqref{state}, we can rewrite~\eqref{weak} with ${\varphi^\star}={b_0^\star}$ and obtain}
$$
\frac{\d}{\d t}\langle x^{x_0,c}(t),{b_0^\star}\rangle= \lambda_0^\star\langle x^{x_0,c}(t),{b_0^\star}\rangle-\langle N c(t),{b_0^\star}\rangle, \quad \forall c\in \mathcal{A}^{{b_0^\star}}_{++}(x_0), \quad {\forall t \geq 0,}
$$
i.e.,
$$\langle x^{x_0,c}(t),{b_0^\star}\rangle=\langle x_0,{b_0^\star}\rangle \de^{\lambda_0^\star t}-\int_0^t \de^{\lambda_0^\star(t-s)} \langle Nc(s), {b_0^\star}\rangle \, \d s, \quad \forall c\in \mathcal{A}^{{b_0^\star}}_{++}(x_0), \quad \blu{\forall t \geq 0.}$$
{Since $N$ is a positive operator and ${b_0^\star} \in X^\star_{++}$ (cf. point~\textit{(\ref{hyp:N})} of Assumption~\ref{ass:L} and point~\textit{(\ref{ass:eigenvect})} of Assumption~\ref{ass:explicit})}, we have
$$0\leq \langle x^{x_0,c}(t),{b_0^\star}\rangle\leq \langle x_0,{b_0^\star}\rangle \de^{\lambda_0^\star t},  \quad \forall c\in \mathcal{A}^{{b_0^\star}}_{++}(x_0), \quad {\forall t \geq 0.}$$
Hence, {the following inequality holds, for all $t \geq 0$,}
\begin{equation*}\label{eq:trasvVF}
0\leq \de^{-\rho t} v(x^{x_0,c}(t))=  \displaystyle{\alpha \de^{-\rho t}  \frac{\langle x^{x_0,c}(t),{b_0^\star}\rangle^{1-\gamma}}{1-\gamma}}
\leq \frac{\alpha}{1-\gamma} \langle x_0,{b_0^\star}\rangle \de^{-(\rho-\lambda_0^\star(1-\gamma)) t}   , \  \ \forall  c\in \mathcal{A}^{{b_0^\star}}_{++}(x_0).
\end{equation*}
{Noticing that $\langle x_0,{b_0^\star}\rangle > 0$ and} using point~\textit{(\ref{ass:lambda0})} of Assumption~\ref{ass:explicit}, we get \eqref{qqq}.

\emph{Case 2: $\gamma>1$.} In this case $v$ is nonpositive. Hence, for every $c\in \mathcal{A}^{{b_0^\star}}_{++}(x_0)$, we have that
$$
\liminf_{k\in\mathbb{N}, \ k \to +\infty} \de^{-\rho k} v(x^{x_0,c}(k))\leq \limsup_{k\in\mathbb{N}, \ k \to +\infty} \de^{-\rho k} v(x^{x_0,c}(k))\leq 0.
$$
Let $k\in\mathbb{N}$.
Then, by \eqref{weak} with ${\varphi^\star}={b_0^\star}$ and since $c\in\mathcal{A}^{{b_0^\star}}_{++}(x_0)$,
{\allowdisplaybreaks
\begin{align*}
0&\leq \langle x^{x_0,c}(k+1),{b_0^\star}\rangle = \langle x^{x_0,c}(k),{b_0^\star}\rangle+\int_{k}^{k+1}\lambda_0^\star\langle x^{x_0,c}(s),{b_0^\star}\rangle \, \d s-\int_{k}^{k+1}\langle c(s),{b_0^\star}\rangle \, \d s \\
&
= \langle x^{x_0,c}(k),{b_0^\star}\rangle \de^{\lambda_0^\star}-\int_{k}^{k+1}\de^{\lambda_0^\star(k+1-s)}\langle c(s),{b_0^\star}\rangle \, \d s.
\end{align*}
} It follows that
\begin{equation*}
\de^{-|\lambda_0^\star|}\int_{k}^{k+1}\langle c(s),{b_0^\star}\rangle \, \d s\leq \langle x^{x_0,c}(k),{b_0^\star}\rangle  \de^{\lambda_0^\star};
\end{equation*}
therefore,
\begin{equation*}
\int_{k}^{k+1}\langle c(s),{b_0^\star}\rangle \, \d s\leq \langle x^{x_0,c}(k),{b_0^\star}\rangle \de^{2|\lambda_0^\star|}.
\end{equation*}
Hence, by Jensen's inequality, monotonicity, concavity, and nonpositivity of the map $\xi\mapsto \frac{\xi^{1-\gamma}}{1-\gamma}$,
\begin{equation*}
\int_{k}^{k+1}\frac{\langle c(s),{b_0^\star}\rangle^{1-\gamma}}{1-\gamma} \, \d s\leq \frac{1}{1-\gamma}\left(\int_{k}^{k+1}\langle c(s),{b_0^\star}\rangle \, \d s\right)^{1-\gamma}\leq \frac{\langle x^{x_0,c}(k),{b_0^\star}\rangle^{1-\gamma}}{1-\gamma}\de^{2|\lambda_0^\star|(1-\gamma)}\leq 0.
\end{equation*}
Therefore,  multiplying by $\de^{-\rho(k+1)}$ and {recalling that $\rho>0$,}
\begin{align}\label{aas}
c_k:=\int_{k}^{k+1}\de^{-\rho (k+1)}\frac{\langle c(s),{b_0^\star}\rangle^{1-\gamma}}{1-\gamma} \, \d s\leq \de^{-\rho k} \frac{\langle x^{x_0,c}(k),{b_0^\star}\rangle^{1-\gamma}}{1-\gamma}\de^{2|\lambda_0^\star|(1-\gamma)-\rho}\leq 0.
\end{align}
{By point~\textit{(\ref{ass:gamma>1})} of Assumption~\ref{ass:explicit},} it follows that 
{\allowdisplaybreaks
\begin{align*}
-\infty&<\left|\frac{({b_0^\star})^{1-\gamma}}{f}\right|_\infty \blu{\mathcal{J}(c)}
=\left|\frac{({b_0^\star})^{1-\gamma}}{f}\right|_\infty
\int_0^\infty \de^{-\rho s}\left(\int_D  \frac{c(s,\theta)^{1-\gamma}}{1-\gamma}	{f(\theta)}\d\theta \right) \d s
\\
&\leq
\int_0^\infty \de^{-\rho s}\left(\int_D  \frac{c(s,\theta)^{1-\gamma}}{1-\gamma}	\frac{ {b_0^\star}(\theta)^{1-\gamma}}{f(\theta)}{f(\theta)}\d\theta \right) \d s
=
\int_0^\infty \de^{-\rho s}\left(\int_D  \frac{c(s,\theta)^{1-\gamma} {b_0^\star}(\theta)^{1-\gamma}}{1-\gamma}\d\theta \right) \d s\\
&=\int_0^\infty \de^{-\rho s}\frac{\langle c(s),{b_0^\star}\rangle^{1-\gamma}}{1-\gamma} \, \d s
=\sum_{k=0}^\infty \int_k^{k+1} \de^{-\rho s}\frac{\langle c(s),{b_0^\star}\rangle^{1-\gamma}}{1-\gamma} \, \d s\leq \sum_{k=0}^\infty c_k\leq 0.
\end{align*}
}
Hence, $c_k\to 0$. Combining {this fact} with \eqref{aas}, we get
$$
0\geq \liminf_{k\in\mathbb{N}, \ k\to\infty}
\de^{-\rho k} v (x^{x_0,c}(k)) =\liminf_{k\in\mathbb{N}, \ k\to\infty}
\de^{-\rho k} \frac{\langle x^{x_0,c}(k),{b_0^\star}\rangle^{1-\gamma}}{1-\gamma}= 0. \qedhere
$$
\end{proof}

\begin{theorem}\label{teo:main} Let $x_0\in X_{++}$ {and define} the control
$$
\hat c(t,\theta)\coloneqq (\Phi \hat{x}^{x_0}(t))(\theta)= \left(
\frac{f(\theta)}{\alpha \eta(\theta){b_0^\star}(\theta)}
\right)^{\frac{1}{\gamma}}
\langle x_0, {b_0^\star}\rangle \de^{gt}, \quad t \geq 0, \, \theta \in D,
$$
{where $\hat x^{x_0}$ is the unique mild solution to the closed loop equation~\eqref{cle}}. 
Then, $\hat c \in \mathcal{A}^{{b_0^\star}}_{++}(x_0)$ and it is optimal for problem $(P^{{b_0^\star}})$ starting at $x_0$. Moreover, the value function $V^{b_0^\star}$ of problem $(P^{b_0^\star})$ satisfies $V^{b_0^\star}(x_0)=v(x_0)$, where $v$ is the function defined in~\eqref{solution}.
\end{theorem}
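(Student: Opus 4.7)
The plan is to obtain the theorem as a direct consequence of the Verification Theorem~\ref{verification} applied to the auxiliary problem $(P^{b_0^\star})$, with $\varphi^\star = b_0^\star \in X^\star_{++}$ (which is available by Assumption~\ref{ass:explicit}~\textit{(\ref{ass:eigenvect})}). Notice that, since $x_0 \in X_{++}$ and $b_0^\star \in X^\star_{++}$, we have $\langle x_0, b_0^\star\rangle > 0$, hence $x_0 \in X^{b_0^\star}_{++}$, so the starting point falls in the domain on which the verification machinery works.

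The first step is to derive the explicit formula for $\hat c$. By construction, $\hat c(t) = \Phi \hat x^{x_0}(t)$, and the map $\Phi$ defined in~\eqref{def:phi} depends on $\hat x^{x_0}(t)$ only through the scalar $\langle \hat x^{x_0}(t), b_0^\star\rangle$. Lemma~\ref{lm:gautovdyn} yields $\langle \hat x^{x_0}(t), b_0^\star\rangle = \langle x_0, b_0^\star\rangle\,\de^{gt}$, and substituting this into~\eqref{def:phi} produces exactly the stated expression for $\hat c$. The second step is to verify that $\hat c \in \mathcal{A}^{b_0^\star}_{++}(x_0)$. The explicit formula shows that $\hat c(t) \in X_+$ for every $t \geq 0$ (both $f/(\alpha \eta b_0^\star)$ and $\langle x_0, b_0^\star\rangle \de^{gt}$ are strictly positive), and local integrability in $t$ with values in $X$ follows from continuity in $t$. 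Moreover, by uniqueness of the mild solution, the state trajectory $x^{x_0,\hat c}$ driven by $\hat c$ coincides with $\hat x^{x_0}$, which by~\eqref{admissibility} remains in $X^{b_0^\star}_{++}$ for all $t \geq 0$.

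The third step is to check the hypotheses of Theorem~\ref{verification}. The function $v$ defined in~\eqref{solution} is a classical solution to the HJB equation~\eqref{HJB} by Proposition~\ref{prop:exp}, and the transversality condition~\eqref{eq:trasvVFnew} holds by Lemma~\ref{lm:trasv}. It remains to verify the pointwise maximality condition~\eqref{argmax} along the closed-loop trajectory. But this is built into the very definition of $\Phi$: from~\eqref{optimizer}, $\hat z(x)$ is the unique maximizer of $z \mapsto \mathcal{H}_{CV}(\nabla v(x); z)$ over $X_+$, and by construction $\hat c(t) = \Phi \hat x^{x_0}(t) = \hat z(\hat x^{x_0}(t)) = \hat z(x^{x_0,\hat c}(t))$, so
\begin{equation*}
\mathcal{H}\bigl(\nabla v(x^{x_0,\hat c}(t))\bigr) = \mathcal{H}_{CV}\bigl(\nabla v(x^{x_0,\hat c}(t)); \hat c(t)\bigr), \quad \forall\, t \geq 0.
\end{equation*}

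With all the hypotheses of Theorem~\ref{verification}~(ii) satisfied, we conclude at once that $v(x_0) = V^{b_0^\star}(x_0)$ and that $\hat c$ is optimal for $(P^{b_0^\star})$ starting at $x_0$. There is no serious obstacle in this argument: each ingredient has already been prepared in Proposition~\ref{prop:exp} and Lemmas~\ref{lm:gautov}--\ref{lm:trasv}. The only point deserving a brief remark is the passage $x_0 \in X_{++} \Rightarrow x_0 \in X_{++}^{b_0^\star}$, which is used both to apply the verification theorem on $X_{++}^{b_0^\star}$ and to guarantee that $\langle x_0, b_0^\star\rangle > 0$ so that the formula for $\hat c$ is well-defined and strictly positive.
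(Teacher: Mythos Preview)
Your proposal is correct and follows essentially the same approach as the paper's proof: invoke Proposition~\ref{prop:exp} for the classical solution, Lemma~\ref{lm:gautovdyn} for admissibility of $\hat c$, Lemma~\ref{lm:trasv} for the transversality condition, and the definition of $\Phi$ for the maximality condition~\eqref{argmax}, then apply Theorem~\ref{verification}. You spell out a few details (the explicit formula for $\hat c$, local integrability, the inclusion $X_{++}\subseteq X_{++}^{b_0^\star}$) that the paper leaves implicit, but the logical structure is identical.
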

\begin{proof}
Recalling~\eqref{eq:hatx0expl}, $x^{x_0,\hat{c}}=\hat x^{x_0}$ by construction. Hence, by~\eqref{admissibility}, $\hat c\in \mathcal{A}^{{b_0^\star}}_{++}(x_0)$. Moreover, again by construction, $\hat c$ verifies the optimality condition~\eqref{argmaxbis}. So, {thanks to Lemma~\ref{lm:trasv}, the assumptions of Theorem~\ref{verification} are verified, whence we obtain $V^{b_0^\star}(x_0)=v(x_0)$.}
\end{proof}

\subsection{Steady states and stability of solutions}\label{sec:ss}
	\blu{Let us begin by stating an important result concerning the operator $B$, defined in~\eqref{eq:Bdef}, that will motivate what follows.}
\begin{lemma}{\citep[Ch. V, Corollary~3.2]{engelnagel:sgr}}\label{lem:cptsgrspectrum}
{If $B$ generates} an eventually compact $C_0$-semigroup on $X$, then the following properties hold:
\begin{enumerate}
\item The spectrum $\sigma(B)$ is either empty, finite, or countable and consists of poles of the resolvent of finite algebraic multiplicity only.
\item The set $\{\nu \in \sigma(B) \colon \mathfrak{Re} \, \nu \geq r\}$ is finite, for any $r \in \R$.
\end{enumerate}
\end{lemma}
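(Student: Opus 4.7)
The plan is to exploit the standard structure theory of compact operators combined with the spectral mapping theorem for eventually compact $C_0$-semigroups. First, fix $t_0 > 0$ such that $T_0 := \de^{t_0 B}$ is compact. Then $\sigma(T_0)$ is either empty, finite, or a sequence converging to $0$, and every nonzero $\mu \in \sigma(T_0)$ is an eigenvalue of finite algebraic multiplicity with a finite-dimensional generalized eigenspace coinciding with the range of the Riesz spectral projection at $\mu$.

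Next I would invoke the spectral mapping theorem for eventually compact semigroups, which yields
\begin{equation*}
\sigma(\de^{tB}) \setminus \{0\} = \de^{t\sigma(B)}, \quad t > 0,
\end{equation*}
together with the matching of algebraic multiplicities: the generalized eigenspace of $T_0$ at a nonzero $\mu$ decomposes as the direct sum of the generalized eigenspaces of $B$ at the (necessarily finitely many) $\lambda \in \sigma(B)$ with $\de^{t_0\lambda} = \mu$. This correspondence delivers the first part of the statement: $\sigma(B)$ is at most countable, and each $\nu \in \sigma(B)$ is an isolated pole of $R(\cdot,B)$ of finite algebraic multiplicity, the corresponding Riesz projection being obtained from the spectral projection of $T_0$ at $\de^{t_0\nu}$ restricted to its $\nu$-component.

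For the second assertion I would argue by contradiction. Assume that $S_r := \{\nu \in \sigma(B) : \mathfrak{Re}\,\nu \geq r\}$ is infinite, and pick a sequence of distinct points $\lambda_n \in S_r$. Since the growth bound $\omega$ of $\{\de^{tB}\}_{t\geq 0}$ is finite and $\sigma(B) \subseteq \{\mathfrak{Re}\,\lambda \leq \omega\}$, the images $\mu_n := \de^{t_0\lambda_n}$ lie in the compact annulus $\{\de^{t_0 r} \leq |z| \leq \de^{t_0\omega}\}$. Extract a convergent subsequence $\mu_n \to \mu$ with $|\mu| \geq \de^{t_0 r} > 0$; by closedness $\mu \in \sigma(T_0)$, and since $\sigma(T_0) \setminus \{0\}$ admits no accumulation point away from $0$, the subsequence $\mu_n$ is eventually constant equal to $\mu$. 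Consequently $\lambda_n = \lambda_\infty + 2\pi i k_n/t_0$ for some fixed $\lambda_\infty$ and infinitely many distinct integers $k_n$, all lying in $\sigma(B)$.

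The contradiction now comes from the multiplicity matching: each $\lambda_n$ contributes a nonzero generalized eigenspace of $B$; these subspaces are linearly independent (distinct eigenvalues) and are all contained in the finite-dimensional generalized eigenspace of $T_0$ at $\mu$, which is impossible. The main technical point is the clean deployment of the spectral mapping theorem and the correct bookkeeping of algebraic multiplicities between $B$ and $T_0$; once that is in place, the rest is a pigeonhole argument on the spectrum of the compact operator $T_0$ in an annulus bounded away from zero.
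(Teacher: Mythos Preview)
The paper does not give its own proof of this lemma: it is stated as a direct citation of \citep[Ch.~V, Corollary~3.2]{engelnagel:sgr} and used as a black box. So there is no argument in the paper to compare against.

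Your proposal is a correct outline of the standard proof, and in fact it is essentially the argument given in Engel--Nagel for this corollary. The two pillars you identify --- Riesz--Schauder theory for the compact operator $T_0=\de^{t_0B}$ and the spectral mapping theorem $\sigma(\de^{tB})\setminus\{0\}=\de^{t\sigma(B)}$ valid for eventually compact semigroups --- are exactly the right ingredients, and the pigeonhole contradiction for part~(2), using that the finite-dimensional generalized eigenspace of $T_0$ at a fixed nonzero $\mu$ must absorb infinitely many linearly independent generalized eigenspaces of $B$, is the standard way to finish. One cosmetic remark: when you write ``$\lambda_n=\lambda_\infty+2\pi i k_n/t_0$ for some fixed $\lambda_\infty$'', what you are really using is only that the (sub)sequence of $\lambda_n$ with $\de^{t_0\lambda_n}=\mu$ share a common real part and differ by integer multiples of $2\pi i/t_0$; no limit point $\lambda_\infty$ in $\sigma(B)$ is needed or claimed.
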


\blu{As a consequence of Lemma~\ref{lem:cptsgrspectrum}, if $\{\de^{tB}\}_{t\geq 0}$ is eventually compact and if $\sigma(B) \neq \emptyset$, {we have that} $\sigma(B) = \{\nu_0, \nu_1, \dots\}$, where $\mathfrak{Re} \, \nu_k \geq \mathfrak{Re} \, \nu_{k+1}$, for any $k \in \mathbb N$ and $\lim_{k \to \infty} \mathfrak{Re} \, \nu_k = -\infty$, provided that $\sigma(B)$ is infinite.
Moreover, by \citep[Ch. IV, Proposition~2.18 (i)]{engelnagel:sgr} we have that $\sigma(B) = \sigma(B^\star)$, and hence Lemma~\ref{lm:gautov} entails that $g \in \sigma(B)$.}

\blu{Notice that $g$ is the exponential growth rate of the map $t\mapsto \langle \hat x^{x_0}(t),b_{0}^{\star}\rangle$, cf.~\eqref{eq:hatx0expl}, where $\hat x^{x_0}$ is the mild solution   to the closed loop equation~\eqref{cle}. This suggests to study convergence and stability of the detrended optimal paths $\{\de^{-gt}\hat x^{x_0}(t)\}_{t \geq 0}$ exploiting the spectral properties of operator $B$. In particular, we aim at using a Perron-Frobenius-type argument. To do so, we need that $g$ is the highest eigenvalue of $B$, i.e., that $g = \nu_0$.
However, we cannot say, \textit{ex ante}, for which $k \in \mathbb{N}$ we have $g=\nu_k$. In Section~\ref{sec:app},  we will see that, under appropriate assumptions, $g=\nu_0$.}

\blu{Motivated by this argument, we are going to provide a stability property of the \emph{detrended optimal paths} $\hat{x}^{x_0}_{\nu_{0}}$ for problem $(P^{b_0^\star})$, where}
$$\hat{x}^{x_0}_{\nu_{0}}(t)\coloneqq \de^{-\nu_{0}t} \hat {x}^{x_0}(t)=\de^{t(B-\nu_{0})}{x_0}, \ \ \ t\geq 0.$$ 
{It is worth noticing that $\hat{x}^{x_0}_{\nu_{0}}$ is the unique (mild) solution} to the \emph{detrended closed-loop equation}
\begin{equation}\label{cleg}
\left\{
\begin{aligned}
&{x}'(t)=(L-\nu_{0}){x}(t)-N\Phi{x}(t), \quad t \geq 0, \\
&{x}(0) = x_0 \in X.
\end{aligned}
\right.
\end{equation}
An element $\bar{x}\in X$ is called a \emph{steady state} for the dynamical system \eqref{cleg} if
\begin{equation}\label{eq:st}
\hat{x}^{\bar{x}}_{\nu_{0}}(t)= \de^{t(B-\nu_{0})}\bar{x}=\bar{x}, \ \ \  \ \ \ \forall t\geq 0,
\end{equation}
From \citep[Ch. IV, Cor. 3.8]{engelnagel:sgr} we deduce that 
\begin{equation}\label{steady} 
\bar{x}\in X \ \mbox{is a steady state for \eqref{cleg}} \ \Longleftrightarrow \ \ \bar x \in \ker (B-\nu_{0}).
\end{equation}
The next theorem provides a stability result under appropriate assumptions.
\begin{theorem}\label{th:stability}
Let $B$ generate an eventually compact $C_0$-semigroup on $X$ and assume that $\nu_{0}$ is a first-order pole of the resolvent, that is,  its algebraic multiplicity is $1$.
Then, for any $x_0 \in X$, there exist $M\geq 1,\,\epsilon>0$ such that
$$| \hat{x}^{x_0}_{{\nu_{0}}}(t) - Px_0|_X \leq M \de^{-\epsilon t}|(1-P)x_0|_X, \quad {\forall t \geq 0,}$$
where $P$ is the spectral projection corresponding to the spectral set $\{\nu_{0}\} \subset \sigma(B)$.
Moreover, $\mathrm{Range}(P)= \ker(B-\nu_{0})$, i.e., $P$ is a projection on $\ker(B-\nu_{0})$, {and $Px_0$ is a steady state for~\eqref{cleg}.}
\end{theorem}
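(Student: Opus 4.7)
The plan is to exploit the spectral decomposition of $B$ associated to the isolated spectral set $\{\nu_0\}$ and to control the semigroup $\{\de^{tB}\}_{t \geq 0}$ on the complementary closed subspace by using the fact that the spectral bound and the exponential growth bound coincide for eventually compact semigroups. First, I would invoke Lemma~\ref{lem:cptsgrspectrum} to guarantee that $\sigma(B)$ is at most countable and consists of poles of the resolvent of finite algebraic multiplicity, so that $\{\nu_0\}$ is an isolated spectral set. Moreover, since $\mathfrak{Re}\,\nu_k \geq \mathfrak{Re}\,\nu_{k+1}$ and the set $\{\nu \in \sigma(B) : \mathfrak{Re}\,\nu \geq r\}$ is finite for every $r \in \R$, there exists $\epsilon>0$ such that $\mathfrak{Re}\,\nu \leq \mathfrak{Re}\,\nu_0 - 2\epsilon$ for every $\nu \in \sigma(B) \setminus \{\nu_0\}$. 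This allows me to define $P$ as the Riesz spectral projection attached to $\{\nu_0\}$, via the Dunford functional calculus applied to a small positively oriented circle around $\nu_0$ enclosing no other point of $\sigma(B)$, and yields the $B$-invariant direct-sum decomposition $X = \mathrm{Range}(P) \oplus \ker(P)$.

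Next, I would exploit the first-order pole assumption on $\nu_0$: a classical spectral-theoretic fact (see, e.g., \citet{engelnagel:sgr}, Ch.~IV) ensures that for a first-order pole the range of the associated spectral projection coincides with the ordinary eigenspace, i.e.\ $\mathrm{Range}(P) = \ker(B - \nu_0)$. Hence $B$ acts as the scalar $\nu_0$ on $\mathrm{Range}(P)$, so that $\de^{t(B-\nu_0)} P x_0 = P x_0$ for every $t \geq 0$. In view of the characterization \eqref{steady}, this already proves that $P x_0$ is a steady state of \eqref{cleg}.

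For the decay estimate, I would consider the restriction $B_1 \coloneqq B|_{\ker(P)}$. Since $\ker(P)$ is a closed $B$-invariant subspace, $B_1$ generates the restriction of $\{\de^{tB}\}_{t \geq 0}$ to $\ker(P)$, and this restricted semigroup inherits eventual compactness (the restriction of a compact operator to a closed invariant subspace is compact). By the spectral decomposition, $\sigma(B_1) = \sigma(B) \setminus \{\nu_0\}$. The key analytic ingredient is that for eventually compact --- and, more generally, eventually norm-continuous --- $C_0$-semigroups, the spectral bound coincides with the exponential growth bound (\citet{engelnagel:sgr}, Ch.~IV), whence
$$\omega_0(\{\de^{tB_1}\}_{t \geq 0}) = s(B_1) \leq \mathfrak{Re}\,\nu_0 - 2\epsilon.$$
This produces $M \geq 1$ such that $|\de^{tB_1}y|_X \leq M \de^{(\mathfrak{Re}\,\nu_0 - \epsilon) t}|y|_X$ for all $y \in \ker(P)$ and $t \geq 0$. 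Decomposing $x_0 = P x_0 + (1-P) x_0$, combining this estimate with the identity from the previous paragraph, and observing that $|\de^{-\nu_0 t}| = \de^{-\mathfrak{Re}\,\nu_0\, t}$, I would finally conclude
$$|\hat{x}^{x_0}_{\nu_0}(t) - P x_0|_X = |\de^{t(B-\nu_0)}(1-P) x_0|_X \leq M \de^{-\epsilon t}|(1-P) x_0|_X,$$
which is the desired bound.

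The principal obstacle is the equality of spectral and exponential growth bound on $\ker(P)$: such an equality fails for generic $C_0$-semigroups on Banach spaces, and it is precisely the eventual compactness of $\{\de^{tB}\}_{t\geq 0}$ --- the standing hypothesis of the theorem --- that validates it through the associated spectral mapping theorem. A secondary subtlety is that $\nu_0$ might \emph{a priori} be complex, so care is needed in distinguishing it from $\mathfrak{Re}\,\nu_0$ in the exponential factors; this is resolved transparently once everything is reduced to a growth bound on the complementary subspace $\ker(P)$.
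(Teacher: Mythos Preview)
Your proposal is correct and follows essentially the same route as the paper. The only difference is one of packaging: the paper obtains the operator-norm estimate $|\de^{t(B-\nu_0)}-P|_{\mathcal{L}(X)}\leq M\de^{-\epsilon t}$ in one stroke by citing \citep[Ch.~V, Cor.~3.3]{engelnagel:sgr}, whereas you unpack that corollary into its two ingredients (the spectral decomposition $X=\mathrm{Range}(P)\oplus\ker(P)$ and the equality of spectral and growth bounds on the invariant subspace $\ker(P)$), and then apply the resulting bound directly to $(1-P)x_0$; the identification $\mathrm{Range}(P)=\ker(B-\nu_0)$ and the steady-state conclusion are argued identically in both.
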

%
%
%
%
%

\begin{proof}
By \citep[Ch. V, Corollary~3.3]{engelnagel:sgr}, there exist constants $\epsilon > 0$ and $M \geq 1$ such that
\begin{equation}\label{est1}
| \de^{t(B-\nu_{0})}-P|_{\mathcal{L}(X)} \leq M \de^{-\epsilon t}, \quad {\forall t \geq 0,}
\end{equation}
where $|\cdot|_{\mathcal{L}(X)}$ denotes the operator norm on the space of linear continuous operators $\mathcal{L}(X)$ and where $P \in \mathcal L(X)$ is the residue of the resolvent at $\nu_0$.

From standard theory (see, e.g., \citep[A-III, Sec. 3]{arendt1986:possgr} or \citep[Ch. IV, Sec. 1]{engelnagel:sgr}), $P$ is the spectral projection corresponding to the spectral set $\{\nu_{0}\} \subset \sigma(B)$. Moreover, being $\nu_{0}$ a simple pole, we have that
$\mbox{Range}(P) = \ker(B-\nu_{0})$ 
(see, e.g., \citep[A-III, Sec. 3, p.~73]{arendt1986:possgr} or \citep[Ch. IV, Sec. 1, p.~247]{engelnagel:sgr}).
{By~\eqref{steady}, this implies that $Px_0$ is a steady state for~\eqref{cleg}. Finally,} using \eqref{steady}--\eqref{est1} and taking into account that $P^2x_0=Px_0$, we have {that, for all $t \geq 0$,}
\begin{align*}
|\hat x_{\nu_{0}}^{x_0}(t) - Px_0|_X 
&= |\de^{t(B-\nu_{0})}x_0 - \de^{t(B-\nu_{0})}Px_0|_X 
\\
&= |\de^{t(B-\nu_{0})}(x_0 - Px_0)-P(x_0-Px_0)|_X \leq M  \de^{-\epsilon t}|(1-P)x_0|_X.\qedhere
\end{align*}
\end{proof}

\begin{remark}\label{rem:explicitP}
For the reader's convenience, we recall that under the assumptions of Theorem~\ref{th:stability} the spectral projection $P$ corresponding to the spectral set $\{\nu_{0}\} \subset \sigma(B)$ is (see \citep[Ch. IV, (1.12)]{engelnagel:sgr}):
\begin{equation}\label{eq:explicitP}
P := -\dfrac{1}{2\pi\mathrm i}\int_\gamma (B-\mu)^{-1} \, \d\mu,
\end{equation}
where $\gamma$ can be taken as the simple curve given by the positively oriented boundary of a disk centered at $\nu_{0}$, with radius small enough so that it does not enclose any other points of $\sigma(B)$.
\end{remark}

\section{\texorpdfstring{Existence of a strictly positive eigenvector of $L^\star$}{Existence of a strictly positive eigenvector of L*}}\label{sec:perronfrobenius}
In this section we state some results guaranteeing that the particularly relevant point~\textit{(\ref{ass:eigenvect})} of Assumption~\ref{ass:explicit} is satisfied. {This fact is tightly linked to some properties of operator $L$, appearing in the state equation~\eqref{state}, that will be discussed.} In the following, recall that we are working under Assumption~\ref{ass:L}.

We start our discussion with the general case where $X$ is any Banach lattice.
Let $\sigma(L)$ be the \emph{spectrum} of the operator $L$ and define the \emph{spectral bound} of $L$:
\begin{equation*}
s_L \coloneqq \sup\{\mathfrak{Re} \lambda \colon \lambda \in \sigma(L)\},
\end{equation*}
with the convention $\sup \emptyset=-\infty$.
Due to Assumption~\ref{ass:L}, $s_L = \sup\{\lambda \in \R \colon \lambda \in \sigma(L)\}$ (see (\citep[Th. 7.4 and Th. 8.7]{clement1987:sgr}) and, if $\sigma(L) \neq \emptyset$, then  $s_L \in \sigma(L)$. The \emph{peripheral} or \emph{boundary spectrum} of $L$ is the subset of $\mathbb{C}$ given by
\begin{equation*}
\sigma_b(L) \coloneqq \{\lambda \in \sigma(L) \colon \mathfrak{Re} \lambda = s_L\}.
\end{equation*}

We introduce the following definitions {(the first one can be generalized, as in \citep[C-III, Def.~3.1]{arendt1986:possgr} or \citep[Prop.~7.6]{clement1987:sgr})}.
\begin{definition}
The {positive $C_0$-semigroup} $\{\de^{tL}\}_{t\geq 0}$ is said to be \emph{irreducible} if for all $f \in X_+\setminus\{0\}$ and  ${\varphi^\star} \in X^\star_+\setminus\{0\}$, there exists $t_0 \geq 0$ such that $\langle \de^{t_0 L} f, {\varphi^\star} \rangle > 0$.
\end{definition}

\begin{definition}
A point $f \in X_+$ is said  a \emph{quasi-interior} point (of $X_+$) if $\langle f, {\varphi^\star} \rangle > 0$ for each $\varphi^\star \in X^\star_+\setminus\{0\}$. \end{definition}

\begin{remark}
If $\mathrm{int}\,{X_+} \neq \emptyset$, the concepts of interior and quasi-interior point coincide; this is the case, for instance, when  $X$ is the space of real-valued continuous functions on a compact set. Instead, in the case $X = \dL^p(D, \mu)$, with $\mu$ a $\sigma$-finite measure and $p \in [1,+\infty)$, we have that $\mathrm{int}\,{X_+} = \emptyset$, while the {set of} quasi-interior points of $X_+$  {coincides with the set of} $\mu$-a.e. strictly positive {functions} (see \citep[p. 238]{arendt1986:possgr}).
\end{remark}

\begin{theorem}\label{th:eig++gen}
Let $\{\de^{tL}\}_{t\geq 0}$ be irreducible and assume that $s_L > -\infty$ is a pole of the resolvent of $L$.
Then,
\begin{enumerate}[(i)]
\item There exists ${\varphi_0^\star} \in D(L^\star) \cap X^\star_{++}$ such that
\begin{equation}\label{eq:eigL*gen}
L^\star {\varphi_0^\star} = s_L {\varphi_0^\star};
\end{equation}
\item $s_L \in \sigma(L)$ has algebraic and geometric multiplicity $1$ and there exist a unique (up to a multiplicative constant) quasi-interior point $f_0 \in D(L) \cap X_+$ such that
\begin{equation}\label{eq:eigLgen}
L f_0 = s_L f_0;
\end{equation}
\item $\sigma_b(L) = s_L + \mathrm{i}\nu\mathbb{Z}$, for some $\nu \geq 0$;
\item $s_L$ is the only eigenvalue of $L$ admitting a positive eigenvector;
\item\label{th:strictposeig} $s_L$ is the only eigenvalue of $L^\star$ admitting a strictly positive eigenvector.
\end{enumerate}
\end{theorem}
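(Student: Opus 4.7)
The plan is to exploit the classical Perron--Frobenius theory for irreducible positive $C_{0}$-semigroups on Banach lattices, as developed in \citep{arendt1986:possgr} and \citep{clement1987:sgr}, and to handle the uniqueness statements (iv) and (v) by a direct duality argument based on the eigenvectors produced in (i) and (ii).

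For parts (i), (ii) and (iii) I would proceed as follows. Since $s_{L}>-\infty$ is a pole of the resolvent of $L$ and $\{\de^{tL}\}_{t\geq 0}$ is irreducible, the spectral bound is an eigenvalue of both $L$ and $L^{\star}$; by the strong form of the Perron--Frobenius theorem for irreducible semigroups on Banach lattices (\citep[C-III, Prop.~3.5 and Thm.~3.8]{arendt1986:possgr}, or equivalently \citep[Prop.~9.11 and Thm.~9.12]{clement1987:sgr}), one obtains simultaneously: a quasi-interior eigenvector $f_{0}\in D(L)\cap X_{+}$ with $Lf_{0}=s_{L}f_{0}$, a strictly positive eigenvector $\varphi_{0}^{\star}\in D(L^{\star})\cap X_{++}^{\star}$ with $L^{\star}\varphi_{0}^{\star}=s_{L}\varphi_{0}^{\star}$, and the fact that $s_{L}$ is a simple pole of the resolvent (hence algebraic, and a fortiori geometric, multiplicity one). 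The cyclicity of the boundary spectrum in (iii), i.e.\ $\sigma_{b}(L)=s_{L}+\mathrm{i}\nu\mathbb{Z}$ for some $\nu\geq 0$, is \citep[C-III, Thm.~3.12]{arendt1986:possgr} (or \citep[Thm.~9.14]{clement1987:sgr}). Uniqueness of $f_{0}$ up to a multiplicative constant follows from the fact that the eigenspace of $s_{L}$ is one-dimensional.

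For (iv), I would argue by duality. Suppose $\lambda\in\sigma(L)$ admits an eigenvector $f\in X_{+}\setminus\{0\}$; then $f\in D(L)$ and, since $\de^{tL}f=\de^{t\lambda}f$ and the semigroup is positive, $\lambda$ must be real. Pairing with the strictly positive functional $\varphi_{0}^{\star}$ from (i),
\begin{equation*}
\lambda\langle f,\varphi_{0}^{\star}\rangle=\langle Lf,\varphi_{0}^{\star}\rangle=\langle f,L^{\star}\varphi_{0}^{\star}\rangle=s_{L}\langle f,\varphi_{0}^{\star}\rangle.
\end{equation*}
Because $\varphi_{0}^{\star}\in X_{++}^{\star}$ and $f\in X_{+}\setminus\{0\}$, the definition of strict positivity in $X^{\star}$ gives $\langle f,\varphi_{0}^{\star}\rangle>0$, hence $\lambda=s_{L}$. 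For (v), the argument is symmetric: if $L^{\star}\varphi^{\star}=\lambda\varphi^{\star}$ with $\varphi^{\star}\in X_{++}^{\star}$, pair against $f_{0}\in D(L)\cap X_{+}\setminus\{0\}$ from (ii),
\begin{equation*}
\lambda\langle f_{0},\varphi^{\star}\rangle=\langle f_{0},L^{\star}\varphi^{\star}\rangle=\langle Lf_{0},\varphi^{\star}\rangle=s_{L}\langle f_{0},\varphi^{\star}\rangle,
\end{equation*}
and $\langle f_{0},\varphi^{\star}\rangle>0$ because $\varphi^{\star}$ is strictly positive and $f_{0}\neq 0$ is positive, so $\lambda=s_{L}$.

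I do not expect a genuine obstacle here: the nontrivial content, namely the construction of $f_{0}$ and $\varphi_{0}^{\star}$ together with the simplicity of $s_{L}$ and the cyclicity of $\sigma_{b}(L)$, is entirely furnished by the Perron--Frobenius machinery already available in the literature; the only point requiring care is to verify that $f_{0}$ (respectively $\varphi^{\star}$) does lie in the domain of the operator before writing the duality identity, which is automatic for eigenvectors.
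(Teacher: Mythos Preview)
Your proposal is correct and follows essentially the same approach as the paper: items (i)--(iii) are obtained by citing the Perron--Frobenius theory in \citep{arendt1986:possgr} and \citep{clement1987:sgr}, and items (iv)--(v) are proved by the same duality pairing argument. One minor difference worth noting: in (iv) the paper first invokes \citep[C-III, Prop.~3.5(a)]{arendt1986:possgr} to assert that any positive eigenvector of $L$ is quasi-interior before concluding $\langle f,\varphi_{0}^{\star}\rangle>0$, whereas you obtain this directly from the definition of $\varphi_{0}^{\star}\in X_{++}^{\star}$; your route is slightly more economical here, but the argument is otherwise identical.
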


\begin{proof}
Items \textit{(i)}--\textit{(iii)} are proved, for instance, in \citep[Th. 8.17]{clement1987:sgr} (see, also, \citep[C-III, Prop. 3.5]{arendt1986:possgr}).

To prove item \textit{(iv)}, suppose that $\lambda \in \mathbb{C}$ is such that $Lf = \lambda f$, where $f \in X_+ \cap D(L)\setminus\{0\}$. Then,
\begin{equation*}
\lambda \langle f, {\varphi_0^\star} \rangle = \langle Lf, {\varphi_0^\star} \rangle = \langle f, L^\star {\varphi_0^\star} \rangle = s_L \langle f, {\varphi_0^\star} \rangle.
\end{equation*}
By \citep[C-III, Prop. 3.5(a)]{arendt1986:possgr}, every positive eigenvector of $L$ must be a quasi-interior point of $X$, hence $\langle f, {\varphi_0^\star} \rangle > 0$; therefore, from the chain of equalities above we get $\lambda = s_L$.

Finally, to prove item \textit{(v)}, suppose that ${\lambda^\star} \in \mathbb{C}$ is such that $L^\star {\varphi^\star} = {\lambda^\star} {\varphi^\star}$, where $\varphi^\star\in X^\star_{++} \cap D(L^\star) \setminus \{0\}$. Then,
\begin{equation*}
\lambda^\star \langle f_0, {\varphi^\star} \rangle = \langle f_0, {\lambda^\star} {\varphi^\star} \rangle = \langle f_0, L^\star {\varphi^\star} \rangle = \langle L f_0, {\varphi^\star} \rangle = s_L \langle f_0, {\varphi^\star} \rangle.
\end{equation*}
Since $f_0 \neq 0$ and $\varphi^\star$ is strictly positive, we have $\langle f_0, {\varphi^\star} \rangle > 0$; therefore the chain of equalities above provides ${\lambda^\star} = s_L$.
\end{proof}

\begin{remark}\label{rem:domeig}
Item (iii) of Theorem~\ref{th:eig++gen} entails that either $\sigma_b(L) = \{s_L\}$ or $\sigma_b(L)$ is an infinite unbounded set. If one is able to exclude the second case or to prove, at least, that the intersection of $\sigma_b(L)$ with the point spectrum of $L$ is the singleton $\{s_L\}$, then, on the one hand, \eqref{eq:eigLgen} implies that $s_L$ is the \emph{dominant eigenvalue} of $L$; on the other hand, \eqref{eq:eigL*gen} implies that point~\textit{(\ref{ass:eigenvect})} of Assumption~\ref{ass:explicit} is satisfied.
\end{remark}

\begin{remark}
The assumption that $s_L > -\infty$, i.e., that $\sigma(L) \neq \emptyset$, in Theorem~\ref{th:eig++gen} is essential, since there are examples of positive irreducible $C_0$-semigroups on Banach lattices such that $\sigma(L) = \emptyset$ (see, e.g. \citep[C-III, Example 3.6]{arendt1986:possgr}). Some conditions that, together with irreducibility, imply that this is not the case are stated in \citep[C-III, Th. 3.7]{arendt1986:possgr}.
\end{remark}

\begin{remark}
Theorem~\ref{th:eig++gen} does not guarantee \emph{uniqueness} (up to normalization) of a strictly positive eigenvector of $L^\star$ associated to $s_L$, since we do not know if $s_L$ is a \emph{geometrically simple} eigenvalue of $L^\star$.
\end{remark}

We conclude our discussion, by looking at the specific case where $X = \calc(D)$, the space of real-valued continuous functions on a set $D$, that we suppose to be a compact separable topological space. {In this case,} $X^\star$ is the space of bounded regular Borel measures on $D$.
{The following result is particularly useful to check that point~\textit{(\ref{hyp:L})} of Assumption~\ref{ass:L} is satisfied.
\begin{proposition}\label{prop:Lstrictpos}
If $X = \calc(D)$, with $D$ a compact separable topological space, and if $\{\de^{tL}\}_{t\geq 0}$ is a positive $C_0$-semigroup, then $\{\de^{tL}\}_{t\geq 0}$ preserves strict positivity, i.e., $\de^{{tL}}(X_{++})\subseteq X_{{++}}$ for all $t\geq 0$. Moreover, the spectrum of $L$ is not empty and $-\infty < s_L \in \sigma(L)$.
\end{proposition}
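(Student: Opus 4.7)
The plan is to handle the two claims separately, both resting on the fact that, when $X = \calc(D)$ with $D$ compact, the constant function $\mathbf{1}$ is an order unit of $X$ and that $X_{++} = \mathrm{int}\,X_+$ coincides with the set $\{f \in \calc(D) \colon f \geq \delta \mathbf{1} \text{ for some } \delta > 0\}$.

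For the preservation of strict positivity, I would first reduce to showing $\de^{tL}\mathbf{1} \in X_{++}$ for every $t \geq 0$: any $f \in X_{++}$ satisfies $f \geq \delta \mathbf{1}$ for some $\delta > 0$, and then positivity of the semigroup yields $\de^{tL} f \geq \delta \de^{tL}\mathbf{1} \in X_{++}$. Strong continuity of the semigroup provides $t_0 > 0$ such that $\|\de^{tL}\mathbf{1} - \mathbf{1}\|_\infty \leq 1/2$, hence $\de^{tL}\mathbf{1} \geq (1/2)\mathbf{1}$, for every $t \in [0, t_0]$. Iterating the semigroup law together with positivity produces $\de^{n t_0 L}\mathbf{1} \geq 2^{-n}\mathbf{1}$ for every $n \in \mathbb{N}$; decomposing an arbitrary $t \geq 0$ as $t = n t_0 + r$ with $r \in [0, t_0)$ then gives $\de^{tL}\mathbf{1} \geq 2^{-(n+1)}\mathbf{1} \in X_{++}$.

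For nonemptiness of $\sigma(L)$ and finiteness of $s_L$, the key observation is the identity $\|\de^{tL}\|_{\mathcal{L}(X)} = \|\de^{tL}\mathbf{1}\|_\infty$ for every $t \geq 0$, which follows from positivity and the order-unit property: for any $f \in \calc(D)$ one has $|\de^{tL}f| \leq \de^{tL}|f| \leq \|f\|_\infty \de^{tL}\mathbf{1}$, giving ``$\leq$'', and the reverse inequality is immediate. Combined with the bound $\de^{n t_0 L}\mathbf{1} \geq 2^{-n}\mathbf{1}$ established in the first step, this forces $\|\de^{n t_0 L}\|_{\mathcal{L}(X)} \geq 2^{-n}$, and hence the growth bound satisfies $\omega_0(L) = \lim_n \tfrac{1}{n t_0}\log\|\de^{n t_0 L}\|_{\mathcal{L}(X)} \geq -(\log 2)/t_0 > -\infty$. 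I would then invoke the classical identity $s_L = \omega_0(L)$ valid for positive $C_0$-semigroups on $\calc(D)$ with $D$ compact (see, e.g., \citep[C-IV]{arendt1986:possgr}) to conclude $s_L > -\infty$; the membership $s_L \in \sigma(L)$ follows from the result in \citep[Th. 7.4, Th. 8.7]{clement1987:sgr} already quoted in the preceding discussion.

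The main delicate point is invoking the identity $s_L = \omega_0(L)$, which is a non-trivial property of positive $C_0$-semigroups specific to the order-unit structure of $\calc(D)$ (equivalently, to $\calc(D)$ being an AM-space with unit) rather than a feature of positive semigroups on generic Banach lattices; everything else is a clean consequence of strong continuity, positivity, and the order-unit property of the constant function $\mathbf{1}$.
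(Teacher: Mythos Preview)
Your argument is correct. It is worth noting that the paper itself does not supply a proof: it simply cites \citep[B-II, Cor.~1.17]{arendt1986:possgr} for strict positivity and \citep[B-III, Th.~1.1]{arendt1986:possgr} for the spectral statement. Your proposal therefore goes further, reproducing the content of those references rather than merely invoking them.

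For the first claim, your reduction to $\de^{tL}\mathbf{1}\in X_{++}$ and the iteration based on strong continuity is exactly the classical argument behind \citep[B-II, Cor.~1.17]{arendt1986:possgr}; it is clean and fully self-contained.

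For the second claim, your route differs slightly from the paper's citation. You first establish $\omega_0(L)>-\infty$ directly from the estimate $\de^{nt_0L}\mathbf{1}\ge 2^{-n}\mathbf{1}$ and the identity $\|\de^{tL}\|=\|\de^{tL}\mathbf{1}\|_\infty$, and then appeal to $s_L=\omega_0(L)$. This works, but two small remarks are in order. First, since in general only $s_L\le\omega_0(L)$ holds, the step $\omega_0(L)>-\infty\Rightarrow s_L>-\infty$ genuinely needs the equality $s_L=\omega_0(L)$; you correctly flag this as the delicate point. Second, the precise reference for this equality on $\calc(D)$ with $D$ compact is in Chapter~B of \citep{arendt1986:possgr} (the $\calc_0(X)$ theory), not Chapter~C (the general Banach-lattice theory); the result you want is the Derndinger-type theorem in B-IV rather than C-IV. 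The paper's own citation, \citep[B-III, Th.~1.1]{arendt1986:possgr}, packages both $s_L>-\infty$ and $s_L\in\sigma(L)$ directly, so in a sense your argument unpacks part of that theorem while still relying on a companion result from the same source.
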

\begin{proof}
For the first part of the statement, see \citep[B-II, Cor. 1.17]{arendt1986:possgr}. A proof of the last assertion is given in \citep[B-III, Th. 1.1]{arendt1986:possgr}.
\end{proof}
The following theorem provides a sufficient condition to ensure that point~\textit{(\ref{ass:eigenvect})} of Assumption~\ref{ass:explicit} is verified.}
\begin{theorem}\label{th:eig++cont}
{If $X = \calc(D)$, with $D$ a compact separable topological space, and if $\{\de^{tL}\}_{t\geq 0}$ is a positive $C_0$-semigroup,} the spectral and the growth bound of $L$ coincide and there exists a positive probability measure $0 \neq {\varphi^\star} \in D(L^\star) \cap X^\star_+$ such that
\begin{equation}\label{eq:eigL*cont}
L^\star {\varphi^\star} = s_L {\varphi^\star}.
\end{equation}
Moreover, if $\{\de^{tL}\}_{t\geq 0}$ is irreducible and if $s_L$ is a pole of the resolvent of $L$, then $\varphi^\star \in D(L^\star) \cap X^\star_{++}$, i.e., $\varphi^\star$ is strictly positive.
\end{theorem}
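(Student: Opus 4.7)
The plan is to treat the three assertions separately, exploiting that $X=\calc(D)$ is an AM-space with order unit~$\mathbf{1}$ whose dual is the AL-space $\mathcal{M}(D)$ of bounded regular Borel measures. The third assertion (strict positivity of $\varphi^\star$ under irreducibility and $s_L$ being a pole of the resolvent) reduces essentially to Theorem~\ref{th:eig++gen}: item~(i) already furnishes a strictly positive eigenvector $\varphi_0^\star\in D(L^\star)\cap X^\star_{++}$ at $s_L$, while item~(ii), whose conclusion on algebraic simplicity transfers to $L^\star$ by the general equality of algebraic multiplicities for an operator and its adjoint, forces $\ker(L^\star-s_L)$ to be one-dimensional. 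Hence any positive eigenmeasure produced in the first part must be a positive scalar multiple of $\varphi_0^\star$, and is therefore strictly positive. So the substantive content lies in the first two assertions in the general, not necessarily irreducible, case.

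For the equality $s_L=\omega_L$, I would invoke the classical theorem (attributed to Derndinger; see \citep[B-IV]{arendt1986:possgr}) that on a Banach lattice with order unit carrying a positive $C_0$-semigroup, the spectral and growth bounds coincide. Combined with Proposition~\ref{prop:Lstrictpos}, which already ensures $-\infty<s_L\in\sigma(L)$, this yields the claim directly. A hands-on alternative would be to prove $\omega_L=\limsup_{t\to\infty}t^{-1}\log|\de^{tL}\mathbf{1}|_X$ exploiting the order-unit property of $\mathbf{1}$, and then identify this rate with $s_L$ via the Laplace transform formula for $R(\lambda,L)\mathbf{1}$.

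For the existence of the positive probability eigenmeasure, my strategy is a weak$^\star$-compactness argument based on the positivity of the resolvent. I would pick $\lambda_n\downarrow s_L$ with $\lambda_n>s_L$ and a fixed probability measure $\mu_0\in X^\star_+$, and set
\[
\varphi^\star_n \coloneqq \frac{R(\lambda_n,L)^\star\mu_0}{|R(\lambda_n,L)^\star\mu_0|_{X^\star}}.
\]
Because $\mathcal{M}(D)$ is an AL-space, $R(\lambda_n,L)^\star$ preserves positivity, and the norm of any positive measure equals its total mass, each $\varphi^\star_n$ is a probability measure. By Banach--Alaoglu, a subsequence converges weakly$^\star$ to some $\varphi^\star\geq 0$; pairing with the constant $\mathbf{1}\in\calc(D)$ shows that $\varphi^\star$ is again a probability measure, hence nonzero. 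The main obstacle is upgrading $\varphi^\star$ to a genuine eigenvector of $L^\star$: setting $C_n\coloneqq|R(\lambda_n,L)^\star\mu_0|_{X^\star}$, the resolvent identity $(\lambda_n-L^\star)R(\lambda_n,L)^\star\mu_0=\mu_0$ gives, for every $f\in D(L)$, $\lambda_n\langle f,\varphi^\star_n\rangle-\langle Lf,\varphi^\star_n\rangle=\langle f,\mu_0\rangle/C_n$; provided $C_n\to\infty$, which must be argued by choosing $\mu_0$ with full support and exploiting that $s_L\in\sigma(L^\star)$, passing to the weak$^\star$ limit yields $\langle Lf,\varphi^\star\rangle=s_L\langle f,\varphi^\star\rangle$ for every $f\in D(L)$, i.e.\ $\varphi^\star\in D(L^\star)$ with $L^\star\varphi^\star=s_L\varphi^\star$. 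A more compact alternative is to package the whole construction by citing the Perron--Frobenius-type statement for positive $C_0$-semigroups on $\calc(D)$ in \citep[C-III]{arendt1986:possgr}.
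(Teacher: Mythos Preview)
The paper's own proof is nothing more than two citations: \citep[B-III, Th.~1.6]{arendt1986:possgr} for the coincidence of spectral and growth bounds and for the existence of a positive invariant probability measure, and Theorem~\ref{th:eig++gen}(i) for strict positivity under irreducibility. Your proposal goes well beyond this by sketching the underlying arguments, and your treatment of the first assertion (Derndinger) and of the last assertion (via algebraic simplicity transferring to $L^\star$) is correct and, for the last assertion, actually more careful than the paper's bare citation.

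However, your weak$^\star$-compactness argument for the eigenmeasure contains a genuine gap at the step ``$C_n\to\infty$, which must be argued by choosing $\mu_0$ with full support''. Full support of a \emph{fixed} $\mu_0$ is not sufficient. Consider $D=[0,1]$ and the multiplication semigroup generated by $(Lf)(\theta)=-\theta f(\theta)$, so that $s_L=0$ and $R(\lambda,L)\mathbf{1}(\theta)=(\lambda+\theta)^{-1}$. If $\mu_0$ has density $2\theta$ with respect to Lebesgue measure (a probability with full support), then
\[
C_n=\langle R(\lambda_n,L)\mathbf{1},\mu_0\rangle=\int_0^1 \frac{2\theta}{\lambda_n+\theta}\,\d\theta
=2\bigl(1-\lambda_n\ln(1+1/\lambda_n)\bigr)\longrightarrow 2,
\]
so $C_n$ stays bounded, and the weak$^\star$ limit of $\varphi^\star_n$ is Lebesgue measure, which is \emph{not} an eigenmeasure of $L^\star$ (the unique one being $\delta_0$). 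The standard remedy is to let the base measure depend on $n$: pick $\theta_n\in D$ with $(R(\lambda_n,L)\mathbf{1})(\theta_n)=|R(\lambda_n,L)\mathbf{1}|_\infty=|R(\lambda_n,L)|$ and set $\mu_{0,n}=\delta_{\theta_n}$; then $C_n=|R(\lambda_n,L)|\to\infty$ automatically since $s_L\in\sigma(L)$, and your limiting computation goes through. Alternatively, as you note, one simply cites the packaged result---but the correct location in \citep{arendt1986:possgr} for the $\calc(D)$ case is B-III rather than C-III.
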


\begin{proof}
The proof of the first statement of the theorem can be found in \citep[B-III, Th. 1.6]{arendt1986:possgr} (see also the discussion that follows). The last statement follows from Theorem~\ref{th:eig++gen}, item (i) (see, also, \citep[B-III, Prop. 3.5]{arendt1986:possgr}).
\end{proof}

\section{Application to an {economic growth problem with space heterogeneity}}\label{sec:app}
We apply the results of the previous sections to a family of economic growth problems where the state variable, representing capital,  is \emph{space heterogeneous}, in the sense that it depends not only on time, but also on the space location.
Problems of this kind have been recently studied in the economic literature (see \cite{boucekkine:spatialAK, BFFGjoeg, BFFGpafa} for more details), embedding them in an infinite dimensional state space of $\dL^2$ type. As mentioned in the Introduction, this did not allow to treat satisfactorily the presence of positivity constraints. Here, instead, we embed the problem in the space of continuous functions. 

\blu{To introduce this family of economic growth problems, we specialize the abstract setting of Section~\ref{sec:model} with the following assumption, that will be in force throughout this section.}
{\begin{assumption}\label{ass:econpb}
\begin{enumerate}[(a)]
\item[]
\item
$
D=\mathbf{S}^1\coloneqq\{\xi\in \R^2 \colon |\xi|_{\R^2}=1\}\cong 2\pi\R/\mathbb{Z}$. The space $\mathbf{S}^1$ is topologically equivalent to $[0,2\pi]\subset\R$, whenever the endpoints of the latter interval are identified. Similarly, functions on $\mathbf{S}^1$ are identified with $2\pi$-perodic functions on $\R$.
\item $\mu$ is the Hausdorff measure on $\mathbf{S}^1$. Through the identification $\mathbf{S}^1\cong 2\pi\R/\mathbb{Z}$, we will understand $\mu$ as the Lebesgue measure on $[0,2\pi]$. \blu{Accordingly, we will write $\d\theta$ instead of $\mu(\d\theta)$, when integrating functions with respect to $\mu$.}
\item \blu{$X=\calc(\mathbf{S}^1)$, endowed with the sup-norm --- i.e., $|x|_X=|x|_\infty \coloneqq \sup_{\theta \in D} |x(\theta)|$ --- and with the pointwise ordering --- i.e., $x \leq_X y \, \Longleftrightarrow \, x(\theta) \leq y(\theta)$ for all $\theta \in \mathbf{S}^1$.}
\item  \blu{The operators $L$ and $N$, that define the state equation~\eqref{state}, and the function $u$ of Assumption~\ref{ass:u} are explicitly given by
\begin{align}
[Lz](\theta)&=\sigma \frac{\d^2}{\d\theta^2}z(\theta) + A(\theta)z(\theta), & &z \in X, \, \theta \in D, \label{eq:Lexpl}
\end{align}
with domain  $D(L)=\calc^2(\mathbf{S}^1)$,
\begin{align}
[Nz](\theta)&=\eta(\theta)z(\theta), & &z \in X, \, \theta \in D, \label{eq:Nexpl2}
\\
u(\theta,\xi)&=\displaystyle{\frac{\xi^{1-\gamma}}{1-\gamma} \eta(\theta)^q}, & &\theta \in D, \, \xi \in \R_+, \label{eq:uexpl2}
\end{align}
where $A, \eta \in \calc(\mathbf S^1; (0,+\infty))$ are given functions and $\sigma > 0$, $q \geq 0$, $\gamma\in (0,1)\cup(1,+\infty)$ are fixed constants.}
\end{enumerate}
\end{assumption}
\blu{\begin{remark}\label{rem:Nuexpl}
Between the two possible choices of $X$ in Assumption~\ref{ass:explicit}, we are considering the set of real-valued continuous functions on a compact set. Notice that the operator $N$ has the same expression given in~\eqref{eq:Nexpl}, while $u$ is of the same type as in~\eqref{eq:uexpl}, with $f = \eta^q$.
As already observed in point~\textit{(\ref{rem:etauhyp})} of Remark~\ref{rem:expl}, this implies that point~\textit{(\ref{hyp:N})} of Assumption~\ref{ass:L} and Assumption~\ref{ass:u} are verified. 
In the following, we will prove that point~\textit{(\ref{hyp:L})} of Assumption~\ref{ass:L} and points~\textit{(\ref{ass:eigenvect})}--\textit{(\ref{ass:C})} of Assumption~\ref{ass:explicit} are satisfied, too.
\end{remark}
}

\blu{
This specific setting corresponds to economic growth models, where the output of the economy is described by an AK production function, a $1$-dimensional geography is explicitly taken into account through a variable $\theta \in \mathbf{S}^1$, and capital diffusion over the geographical space is considered. Let us briefly summarize these problems (we refer to \citep{boucekkine:spatialAK, BFFGjoeg, BFFGpafa} for further details).
}

\blu{
The state variable is the capital level $K(t,\theta)$ at time $t\geq 0$ and at location $\theta\in \mathbf{S}^{1}$. Its evolution depends on the consumption policy $C(t,\theta)$, that has to be determined by a social planner at each time $t\geq 0$ and at each location $\theta\in \mathbf{S}^{1}$ with the aim of maximizing a reward functional. More precisely, for any initial capital endowment $K_0 \in \calc(\mathbf{S}^1;(0,+\infty))$, the time-space evolution of the capital level obeys the following PDE
\begin{equation}\label{statebis}
\begin{cases}\displaystyle{
\frac{\partial K}{\partial t}(t,\theta)=\sigma \frac{\partial^2K}{\partial \theta^2}(t,\theta)+A(\theta)K(t,\theta)-\eta(\theta)C(t,\theta),\  \ \ \  (t,\theta)\in \R_+\times \mathbf{S}^1,}\\\\
K(0,\theta)=K_0(\theta), \ \ \ \ \  \ \theta\in \mathbf{S}^1.
\end{cases}
\end{equation}
The data appearing in this PDE are the functions $\eta, A$, where $\eta(\theta)$ represents the population density at location $\theta\in \mathbf{S}^{1}$, while $A(\theta)$ represents the technological level of the economy at location $\theta\in\mathbf{S}^{1}$, and a diffusion coefficient $\sigma$  measuring how fast the capital spreads over the geographical space.
The optimization problem that the social planner aims to solve is
\begin{equation}\label{eq:econP}
V_{++}(K_0)\coloneqq \sup_{\mathcal{A}_{++}(K_0)}
\int_0^\infty \de^{-\rho t}\left( \int_{\mathbf{S}^1}  \frac{C(t,\theta)^{1-\gamma}}{1-\gamma}\eta(\theta)^q \d\theta \right)\d t,
\end{equation}
where $\rho > 0$ is a fixed discount factor, the parameters $q \geq 0$ and $\gamma \in (0,1) \cup (1,+\infty)$ describe the preferences of the social planner, and
\begin{equation}\label{eq:A++}
\mathcal{A}_{++}(K_0)\coloneqq\big\{C\in \dL^1_{loc}([0,+\infty);\calc(\mathbf{S}^1;\R^+)) \colon K(t,\theta)> 0 \text{ for all } (t,\theta)\in \R_+\times \mathbf{S}^1\big\}.
\end{equation}
It is clear that under Assumption~\ref{ass:econpb} this problem can be reformulated in the abstract setting introduced in Section~\ref{sec:model}. More specifically, with the identifications $x_0 \coloneqq K_{0}$, $x(t) \coloneqq K(t,\cdot)$, $c(t) \coloneqq C(t,\cdot)$, PDE~\eqref{statebis} corresponds to the state equation~\eqref{state}, the optimization problem~\eqref{eq:econP} coincides with problem~\eqref{eq:pbP}, and the class of admissible consumption plans~\eqref{eq:A++} is precisely the class of admissible controls~\eqref{eq:A++def}.
}
\blu{As in Section~\ref{sec:hjb}, we can relax the state constraint~\eqref{eq:A++} and consider the optimization problem
\begin{equation}\label{eq:econPb0}
V^{b_0^\star}(K_0)\coloneqq \sup_{\mathcal{A}^{b_0^\star}_{++}(K_0)}
\int_0^\infty \de^{-\rho t}\left( \int_{\mathbf{S}^1}  \frac{C(t,\theta)^{1-\gamma}}{1-\gamma}\eta(\theta)^q \d\theta \right)\d t,
\end{equation}
where $b_0^\star$ is the eigenvector of point~\textit{(\ref{ass:eigenvect})} of Assumption~\ref{ass:explicit} and
$$
\mathcal{A}^{b_0^\star}_{++}(K_0)\coloneqq\big\{C\in \dL^1_{loc}(\R^+;\calc(\mathbf{S}^1;\R^+)) \colon \langle K(t,\cdot),b_0^\star\rangle > 0 \mbox{ for all } t\in \R_+\times \mathbf{S}^1\big\}.
$$
Exploiting the results proved in the previous sections, we are able to find an explicit solution to problem~\eqref{eq:econPb0} and a feedback optimal control. Finally, studying the corresponding closed loop equation, we will prove optimality of this control for problem~\eqref{eq:econP}, at least for a suitable set of initial conditions $K_0$.}

Let us consider the Hilbert space $\dL^2(\mathbf{S}^1)$ of square integrable functions $x:\mathbf{S}^1\to \R$ endowed with inner product and induced norm
\begin{equation*}
(x, y)_{\dL^2(\mathbf{S}^1)} \coloneqq \int_{\mathbf{S}^1} x(\theta) y(\theta) \, \d\theta, \ \ \  x,y \in \dL^2(\mathbf{S}^1);
\quad
|x|_{\dL^2(\mathbf{S}^1)} \coloneqq \biggl(\int_{\mathbf{S}^1} x(\theta)^2 \, \d\theta\biggr)^{\frac 12}, \ \ \ x \in \dL^2(\mathbf{S}^1).
\end{equation*}

Each element $f\in \dL^2(\mathbf{S}^1)$ can be seen as an element of  $\calc(\mathbf{S}^1)^\star$ through
$\langle x,f\rangle:=(x,f)_{\dL^2(\mathbf{S}^1)}$
and we have the continuous embeddings $\calc(\mathbf{S}^1)\hookrightarrow \dL^2(\mathbf{S}^1)\hookrightarrow \calc(\mathbf{S}^1)^\star.$
\blu{The next proposition ensures that also point~\textit{(\ref{hyp:L})} of Assumption~\ref{ass:L} is satisfied (for point~\textit{(\ref{hyp:N})} of the same assumption, cf. Remark~\ref{rem:Nuexpl}).}
\begin{proposition}
By  \citep[B-III, Examples 2.14(a) and 3.4(e)]{arendt1986:possgr}) and \citep[B-II, Cor. 1.17]{arendt1986:possgr}, we have the following results.
\begin{enumerate}[(i)]
\item The operator $L \colon D(L)=\calc^2(\mathbf{S}^1)\subset \calc(\mathbf{S}^1)\to \calc(\mathbf{S}^1)$, defined in~\eqref{eq:Lexpl}, is a closed operator generating a positive irreducible $C_0$-semigroup in the space $\calc(\mathbf{S}^1)$.
\item  Assumption~\ref{ass:L} is satisfied.
\end{enumerate}
\end{proposition}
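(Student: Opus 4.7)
The plan is to leverage the specific structure of $L$ as a bounded multiplicative perturbation of the one-dimensional Laplacian on the circle, together with Proposition~\ref{prop:Lstrictpos} from Section~\ref{sec:perronfrobenius}. Since the statement itself points to \citep[B-III, Examples 2.14(a) and 3.4(e)]{arendt1986:possgr} and \citep[B-II, Cor.~1.17]{arendt1986:possgr}, the argument essentially reduces to unpacking how those results apply to our specific $L$, and to checking that the conclusions line up exactly with the requirements of Assumption~\ref{ass:L}.

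For claim~(i), I would proceed in three steps. First, I would isolate the unperturbed operator $L_0 z \coloneqq \sigma \, \d^2 z / \d\theta^2$ on $\calc(\mathbf{S}^1)$ with domain $\calc^2(\mathbf{S}^1)$: this is the standard generator of the heat semigroup on the circle, which is a positive irreducible $C_0$-semigroup, as stated in \citep[B-III, Example 2.14(a)]{arendt1986:possgr}. Second, I would observe that the multiplication operator $M_A \colon z \mapsto A(\cdot) z(\cdot)$ is bounded on $\calc(\mathbf{S}^1)$ (since $A \in \calc(\mathbf{S}^1)$ is continuous on the compact set $\mathbf{S}^1$, with operator norm $|A|_\infty$) and positive (since $A > 0$), and invoke the bounded perturbation theorem \citep[Ch.~III, Thm.~1.3]{engelnagel:sgr} to conclude that $L = L_0 + M_A$ is closed on $D(L) = \calc^2(\mathbf{S}^1)$ and generates a $C_0$-semigroup $\{\de^{tL}\}_{t \geq 0}$. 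Third, I would appeal to \citep[B-III, Example 3.4(e)]{arendt1986:possgr} to conclude that positivity and irreducibility are inherited by $\{\de^{tL}\}_{t \geq 0}$ under this type of bounded positive perturbation.

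For claim~(ii), I would verify separately the two points of Assumption~\ref{ass:L}. Point~\textit{(\ref{hyp:L})} requires the semigroup to preserve strict positivity, i.e., $\de^{tL}(X_{++}) \subseteq X_{++}$; this follows immediately from Proposition~\ref{prop:Lstrictpos} (which in turn rests on \citep[B-II, Cor.~1.17]{arendt1986:possgr}), since $\mathbf{S}^1$ is a compact separable topological space and $\{\de^{tL}\}_{t \geq 0}$ is a positive $C_0$-semigroup by claim~(i). Point~\textit{(\ref{hyp:N})} is immediate and was already noted in Remark~\ref{rem:Nuexpl}: the multiplication operator $N z = \eta z$, with $\eta \in \calc(\mathbf{S}^1;(0,+\infty))$, is bounded (with norm $|\eta|_\infty$) and positive.

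The step carrying the substantive content is the appeal to \citep[B-III, Example 3.4(e)]{arendt1986:possgr} for preservation of irreducibility under the multiplicative perturbation by $A$; the other steps --- generation, closedness, positivity, strict positivity preservation, and the properties of $N$ --- are either routine or reduce directly to results already collected earlier in the paper. I do not expect any real obstacle, provided the cited examples in Arendt et al.\ apply verbatim to the operator in \eqref{eq:Lexpl}, which they do since $A$ is continuous and strictly positive on the compact circle $\mathbf{S}^1$.
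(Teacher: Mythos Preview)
Your proposal is correct and faithfully unpacks the argument that the paper leaves implicit: the paper does not provide a separate proof for this proposition but simply embeds the justification in the statement via the citations to \citep[B-III, Examples 2.14(a) and 3.4(e)]{arendt1986:possgr} and \citep[B-II, Cor.~1.17]{arendt1986:possgr}. Your decomposition $L = L_0 + M_A$, together with the bounded perturbation theorem and the appeal to Proposition~\ref{prop:Lstrictpos} and Remark~\ref{rem:Nuexpl}, is exactly the content those citations are meant to convey.
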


\blu{Our first aim is to use Theorem~\ref{teo:main} to provide an explicit solution to problem~\eqref{eq:econPb0}. To do so, we need, first, to ensure that Assumption~\ref{ass:explicit} is verified.}

By standard results from Sturm-Liouville theory with periodic boundary conditions (see, e.g., \citep[Ch.\,7, Th.\,2.1 and Th.\,4.1; Ch.\,8,\,Th.3.1]{coddington:ode} --- cf. also \citep{brown:periodic}), we have the following results.
\begin{proposition}\label{prop:Lspectrum}
\begin{enumerate}[(i)]
\item[]
\item\label{prop:Leig}
The eigenvalues of $L$ are real and form a countable discrete set $\{\lambda_k\}_{k \in \mathbb N}$ such that $\lambda_0 > \lambda_1 > \cdots$, and $\lim_{k \to \infty} \lambda_k = -\infty$.
\item The eigenvalues $\lambda_k$, for $k\geq 1$, have geometric multiplicty $2$, while the highest eigenvalue, $\lambda_0$,  is simple.
The corresponding (normalized) eigenvectors $\{b_k\}_{k \in \mathbb N}$ form an orthonormal basis of $\dL^2(\mathbf{S}^1)$.
\item
 Each $b_k$ has exactly $k$ zeros in $\mathbf{S}^1$; without loss of generality, we take $b_0 > 0$;  this is, up to positive a multiplicative constant, the unique strictly positive eigenvector of $L$.
\item For all $x \in \calc(\mathbf{S}^1)$,
\begin{equation}\label{lem:unifconv}x = \sum_{k=0}^\infty (x, b_k)_{\dL^2(\mathbf{S}^1)} \, b_k :=\lim_{n\to\infty}\sum_{k=0}^n (x, b_k)_{\dL^2(\mathbf{S}^1)} \, b_k, \quad \mbox{in } |\cdot|_\infty.
\end{equation}
\item For all $\mu\in\mathbb{C}\setminus \{\lambda_k\}_{k\in\mathbb{N}}$, the operator $L-\mu:\calc^2(\mathbf{S}^1)\to \calc(\mathbf{S}^1)$ is invertible, the inverse $(L-\mu)^{-1}:\calc(\mathbf{S}^1)\to \calc^2(\mathbf{S}^1)$ is bounded and admits the representation
\begin{equation}\label{resolvent}
(L-\mu)^{-1}x=\sum_{k=0}^\infty \dfrac{(b_k,x)_{\dL^2(\mathbf{S}^1)}}{\lambda_k - \mu} b_k:=\lim_{n\to\infty}\sum_{k=0}^n \dfrac{(b_k,x)_{\dL^2(\mathbf{S}^1)}}{\lambda_k - \mu} b_k, \quad \mbox{in } |\cdot|_\infty.
\end{equation}
In particular the resolvent set of $L$ is $\varrho(L)=\mathbb{C}\setminus\{\lambda_k\}_{k\geq 0}$.
\end{enumerate}
\end{proposition}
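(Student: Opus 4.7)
The plan is to reduce the proposition to classical spectral theory for the self-adjoint Sturm--Liouville operator with periodic boundary conditions, combined with the Perron--Frobenius-type results from Section~\ref{sec:perronfrobenius}. First I would work in $\dL^2(\mathbf{S}^1)$ with the natural self-adjoint realization of $L$ (domain $H^2_{\mathrm{per}}(\mathbf{S}^1)\supseteq\calc^2(\mathbf{S}^1)$): since $A\in\calc(\mathbf{S}^1)$ is bounded and $\mathbf{S}^1$ is compact, this realization is self-adjoint, bounded above by $\max A$, and has compact resolvent (by Rellich--Kondrachov compactness of $H^2_{\mathrm{per}}\hookrightarrow\dL^2$ on the compact manifold $\mathbf{S}^1$). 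The spectral theorem for compact self-adjoint operators then delivers a discrete sequence of real eigenvalues $\{\lambda_k\}_{k\in\mathbb{N}}$ tending to $-\infty$, together with an $\dL^2$-orthonormal basis of eigenvectors. This yields (i) and the orthonormal basis part of (ii). The geometric multiplicity of any $\lambda_k$ is at most two because the ODE $\sigma y'' + A(\theta)y = \lambda y$ is second-order, so its solution space is two-dimensional. Simplicity of the top eigenvalue $\lambda_0$ follows from Theorem~\ref{th:eig++gen}(ii), applied to the positive irreducible $C_0$-semigroup $\{\de^{tL}\}_{t\geq 0}$ on $\calc(\mathbf{S}^1)$: since $s_L=\lambda_0$ is a pole of the resolvent (by compactness), algebraic, and hence geometric, multiplicity equals one.

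For (iii), Theorem~\ref{th:eig++gen}(ii) also provides a quasi-interior eigenvector of $L$ associated with $\lambda_0$; as $\mathrm{int}\,X_+\neq\emptyset$ when $X=\calc(\mathbf{S}^1)$, this eigenvector is strictly positive and may be normalized to $b_0$. Uniqueness (up to a positive multiplicative constant) of such a strictly positive eigenvector is Theorem~\ref{th:eig++gen}(iv) combined with simplicity of $\lambda_0$. The zero count for $b_k$ with $k\geq 1$ is the Sturm oscillation theorem for the periodic Sturm--Liouville problem, which can be extracted from Coddington--Levinson (Ch.~8, Th.~3.1).

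Parts (iv) and (v) I would handle in tandem. Part (iv) is the most delicate step: uniform convergence of the eigenfunction expansion for an arbitrary $x \in \calc(\mathbf{S}^1)$ --- the analogue of the classical Sturm--Liouville expansion theorem --- is not automatic, since Fourier-type counterexamples exist even on $\mathbf{S}^1$. It rests on refined asymptotic estimates for periodic Sturm--Liouville eigenfunctions (essentially $b_k$ is, in sup-norm, an $O(k^{-1})$ perturbation of suitable normalized trigonometric functions, and $\lambda_k\sim -\sigma k^2/4$) as proved in Coddington--Levinson (Ch.~7, Th.~4.1) and in Brown; pinning down this asymptotic reduction rigorously is the main technical obstacle, but is standard in the cited references. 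Once (iv) is granted, part (v) follows directly: for any $\mu\in\mathbb{C}\setminus\{\lambda_k\}_{k\in\mathbb{N}}$ the series $\sum_k (\lambda_k-\mu)^{-1}(b_k,x)_{\dL^2(\mathbf{S}^1)}\,b_k$ converges absolutely in $|\cdot|_\infty$ (by the $k^{-2}$ decay of $(\lambda_k-\mu)^{-1}$ together with uniform boundedness of the $b_k$ from the asymptotic estimates), defines a bounded operator on $\calc(\mathbf{S}^1)$, and is easily verified to invert $L-\mu$ on $\calc^2(\mathbf{S}^1)$ using (iv); this gives the representation~\eqref{resolvent} and the identification $\varrho(L)=\mathbb{C}\setminus\{\lambda_k\}_{k\geq 0}$.
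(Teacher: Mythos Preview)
Your proposal is correct and follows essentially the same approach as the paper, which does not give a self-contained proof but simply cites standard periodic Sturm--Liouville theory (Coddington--Levinson, Ch.~7--8, and Brown). Your sketch fleshes out that citation in the natural way---self-adjointness and compact resolvent in $\dL^2$, the second-order ODE bound on multiplicity, and Sturm oscillation for the zero count---and you correctly flag that (iv) is the delicate step requiring the asymptotic eigenfunction estimates from those same references; the only minor difference is that you invoke the paper's own Theorem~\ref{th:eig++gen} for simplicity and strict positivity of the top eigenvector, whereas the paper's citation would presumably extract this directly from the Sturm--Liouville references.
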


\begin{remark}
The last assertion of Proposition~\ref{prop:Lspectrum} entails that the spectrum of operator $L$ is $\sigma(L) = \{\lambda_k\}_{k\geq 0}$. Moreover, using \eqref{lem:unifconv} and the fact that $L$ is closed, we have that
\begin{equation}\label{eq:Lsumswap}
Lx = \sum_{k \in \mathbb N} (x, b_k)_{\dL^2(\mathbf{S}^1)} \lambda_k b_k, \quad  \mbox{in }  |\cdot|_\infty, \quad x \in \calc^2(\mathbf{S}^1).
\end{equation}
\end{remark}

From the facts above, we have that  $s_L = \lambda_0$ is the dominant eigenvalue of the operator $L$ and Theorem~\ref{th:eig++cont} guarantees that there exists a strictly positive eigenvector ${b_0^\star} \in D(L^\star) \cap \calc(\mathbf{S}^1)^\star_{++}$ of $L^\star$ associated to the eigenvalue $\lambda^\star_0=\lambda_0$. Moreover, thanks to point~\textit{(\ref{th:strictposeig})} of Theorem~\ref{th:eig++gen}, $\lambda_0$ is the only eigenvalue of $L^\star$ admitting a strictly positive eigenvector. Hence, point~\textit{(\ref{ass:eigenvect})} of Assumption~\ref{ass:explicit} is satisfied.
The next proposition characterizes $b_0^\star$. 

\begin{proposition} \blu{Every strictly positive eigenvector of $L^{\star}$ associated to $\lambda_0$ is in the form
$b_0^\star=\kappa b_0$ for some $\kappa>0$.}
\end{proposition}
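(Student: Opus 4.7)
The plan is to use the spectral decomposition of $L$ given in Proposition~\ref{prop:Lspectrum} and to ``test'' the eigenvalue equation for $L^\star$ against the eigenfunctions $b_k$ of $L$, then combine this with the uniform convergence property~\eqref{lem:unifconv} to identify $b_0^\star$ as a multiple of $b_0$ via the embedding $\dL^2(\mathbf{S}^1)\hookrightarrow\calc(\mathbf{S}^1)^\star$.

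More precisely, I first observe that, by standard Sturm--Liouville regularity (since $A\in\calc(\mathbf{S}^1)$), every eigenfunction $b_k$ belongs to $\calc^2(\mathbf{S}^1)=D(L)$, so the eigenvalue equation $L^\star b_0^\star=\lambda_0 b_0^\star$ can be tested against each $b_k$: for every $k\in\mathbb{N}$,
\begin{equation*}
\lambda_k\,\langle b_k,b_0^\star\rangle
=\langle Lb_k,b_0^\star\rangle
=\langle b_k,L^\star b_0^\star\rangle
=\lambda_0\,\langle b_k,b_0^\star\rangle,
\end{equation*}
whence $(\lambda_k-\lambda_0)\langle b_k,b_0^\star\rangle=0$. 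By point~\textit{(\ref{prop:Leig})} of Proposition~\ref{prop:Lspectrum}, $\lambda_k\neq\lambda_0$ for all $k\geq 1$, so $\langle b_k,b_0^\star\rangle=0$ for all $k\geq 1$.

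Next, for any $x\in \calc(\mathbf{S}^1)$, the expansion~\eqref{lem:unifconv} converges in $|\cdot|_\infty$, and $b_0^\star\in\calc(\mathbf{S}^1)^\star$ is continuous with respect to this norm. Therefore
\begin{equation*}
\langle x,b_0^\star\rangle
=\lim_{n\to\infty}\sum_{k=0}^n (x,b_k)_{\dL^2(\mathbf{S}^1)}\,\langle b_k,b_0^\star\rangle
=(x,b_0)_{\dL^2(\mathbf{S}^1)}\,\langle b_0,b_0^\star\rangle.
\end{equation*}
Setting $\kappa\coloneqq\langle b_0,b_0^\star\rangle$, strict positivity of $b_0^\star$ together with $b_0\in X_+\setminus\{0\}$ yields $\kappa>0$, and the identity above shows that $b_0^\star$ acts on $\calc(\mathbf{S}^1)$ exactly as the measure with density $\kappa b_0$ with respect to Lebesgue measure, i.e., $b_0^\star=\kappa b_0$ under the embedding $\dL^2(\mathbf{S}^1)\hookrightarrow\calc(\mathbf{S}^1)^\star$.

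The only mildly technical point is the interchange of the series and the action of $b_0^\star$; this is harmless because the convergence in~\eqref{lem:unifconv} is in the sup-norm and $b_0^\star$ is continuous on $(\calc(\mathbf{S}^1),|\cdot|_\infty)$. No verification theorem, Perron--Frobenius argument, or explicit use of the resolvent formula~\eqref{resolvent} is required here---the uniqueness follows cleanly from the completeness of $\{b_k\}_{k\in\mathbb{N}}$ in $\dL^2$ combined with the pairing between $\calc(\mathbf{S}^1)$ and $\calc(\mathbf{S}^1)^\star$.
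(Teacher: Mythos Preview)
Your proof is correct and, in a sense, more complete than the paper's own argument. The paper proceeds in the \emph{opposite direction}: it shows that $b_0$ is an eigenvector of $L^\star$ with eigenvalue $\lambda_0$, by expanding an arbitrary $x\in D(L)$ in the basis $\{b_k\}$ and computing
\[
\langle Lx,b_0\rangle=\sum_{k\ge 0}(x,b_k)_{\dL^2}\lambda_k\langle b_k,b_0\rangle=\lambda_0(x,b_0)_{\dL^2}=\langle x,\lambda_0 b_0\rangle,
\]
so $L^\star b_0=\lambda_0 b_0$. This establishes that $\kappa b_0$ \emph{is} a strictly positive eigenvector, but does not by itself prove that \emph{every} strictly positive eigenvector of $L^\star$ for $\lambda_0$ has this form.

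Your approach---testing $L^\star b_0^\star=\lambda_0 b_0^\star$ against each $b_k$ to get $\langle b_k,b_0^\star\rangle=0$ for $k\ge 1$, and then using the uniform convergence~\eqref{lem:unifconv} to identify $b_0^\star$ with $\kappa b_0$---directly addresses the uniqueness claim in the statement. The two arguments are essentially dual to one another (expanding $x$ versus expanding the functional's action on the basis), and both rest on the same ingredients: the completeness of $\{b_k\}$, orthonormality in $\dL^2$, and uniform convergence of the Fourier series in $\calc(\mathbf{S}^1)$. Your version has the advantage of matching the quantifier in the proposition as stated.
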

\begin{proof}
Take $x\in \calc(\mathbf{S}^1)$. Using \eqref{eq:Lsumswap} and by uniform convergence, we get
\begin{equation*}
\langle Lx,b_0\rangle = \sum_{k=0}^\infty (x,b_k)_{\dL^2(\mathbf{S}^1)}\lambda_k\langle b_k,b_0\rangle = \lambda_0(x,b_0)_{\dL^2(\mathbf{S}^1)} |b_0|^2_{\dL^2(\mathbf{S}^1)}=\langle x,\lambda_0 b_0\rangle.
\end{equation*}
Since $x\in \calc(\mathbf{S}^1)$ is arbitrary, we obtain that $b_0\in D(L^\star)$ and that $L^\star b_0=\lambda_0 b_0$.
\end{proof}

\blu{In the following, we set $\kappa:=1$, i.e., we take $b_{0}^{\star} \coloneqq b_{0}$. {The result above,  together with continuity and positivity of the function $\eta$ (recall also that $f = \eta^q$ in this section), shows that point~\textit{(\ref{ass:C})} of Assumption~\ref{ass:explicit} is satisfied. Moreover, thanks to compactness of $D = \mathbf{S}^1$, we get that also point~\textit{(\ref{ass:gamma>1})} of Assumption~\ref{ass:explicit} is verified.
Finally, assuming that point~\textit{(\ref{ass:lambda0})} of Assumption~\ref{ass:explicit} holds, i.e., that
\begin{equation}\label{ass:rho}
\rho>{\lambda_0}(1-\gamma),
\end{equation}
all the hypotheses of Theorem~\ref{teo:main} are verified and hence, by making explicit the latter for the economic problem introduced above, we get} the following.}
\blu{\begin{theorem}\label{th:econexample}
We have the following facts.
\begin{enumerate}[(i)]
\item The value function of problem~\eqref{eq:econPb0} is
\begin{equation}\label{VVFF}
V^{b_0^\star}(K_0)= \frac{\alpha}{1-\gamma} \left(\int_{\mathbf{S}^1} K_0(\theta) \, {b_0}(\theta) \, \d\theta)\right)^{1-\gamma},
\end{equation}
where
\begin{equation}\label{eq:alphabis}
\alpha\coloneqq \biggl[\frac{\gamma}{\rho - \lambda_0(1-\gamma)} \int_{\mathbf{S}^1} \eta(\theta)^{\frac{q+\gamma-1}{\gamma}} b_0(\theta)^{\frac{\gamma-1}{\gamma}} \, \d\theta \biggr]^{\gamma};
\end{equation}
\item The control
\begin{equation}\label{cc}
\hat{C}(t,\theta):
=\left(\int_{\mathbf{S}^1} {K}_0(\xi) \, {b_0}(\xi) \, \d\xi\right) \bigl(\alpha {b_0}(\theta)\bigr)^{-\frac 1\gamma} \eta(\theta)^{\frac{q-1}{\gamma}}\de^{gt},
\end{equation}
where $g\coloneqq\dfrac{\lambda_0-\rho}{\gamma}$, 
is optimal for problem~\eqref{eq:econPb0} and the corresponding optimal capital $\hat{K}$ solves, for $(t,\theta)\in \R_+\times \mathbf{S}^1$, the linear integro-PDE
\begin{equation}\label{clebis}
\begin{cases}
\displaystyle{\frac{\partial K}{\partial t}(t,\theta)=\sigma \frac{\partial^2K}{\partial \theta^2}(t,\theta)\!+\!A(\theta)K(t,\theta)\!-\!\bigl(\alpha {b_0}(\theta)\bigr)^{-\frac 1\gamma}\!\! \left(\int_{\mathbf{S}^1} \!\!\!K(t,\xi) \, {b_0}(\xi) \, \d\xi\right)\!\eta(\theta)^{\frac{q+\gamma-1}{\gamma}}},\medskip\\
K(0,\theta)=K_0(\theta).
\end{cases}
\end{equation}
\end{enumerate}
\end{theorem}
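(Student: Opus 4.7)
The plan is to apply Theorem~\ref{teo:main} to the concrete setting of Assumption~\ref{ass:econpb} and then translate the abstract quantities into the explicit economic ones. The paragraphs preceding the statement already verify the hypotheses: Assumption~\ref{ass:L} holds by the proposition just above; point~\textit{(\ref{ass:eigenvect})} of Assumption~\ref{ass:explicit} holds with $\lambda_0^\star = \lambda_0$ and $b_0^\star = b_0$; point~\textit{(\ref{ass:lambda0})} is precisely \eqref{ass:rho}; and points~\textit{(\ref{ass:gamma>1})} and~\textit{(\ref{ass:C})} follow from compactness of $\mathbf{S}^1$ together with continuity and strict positivity of $\eta$, $b_0$, and $f = \eta^q$.

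For part (i), formula \eqref{VVFF} is nothing but \eqref{solution} of Proposition~\ref{prop:exp} rewritten for the case at hand: the pairing $\langle K_0, b_0^\star\rangle$ reduces, via the embedding $\calc(\mathbf{S}^1)\hookrightarrow \calc(\mathbf{S}^1)^\star$ noted before the theorem, to $\int_{\mathbf{S}^1} K_0(\theta)\,b_0(\theta)\,\d\theta$. The explicit constant \eqref{eq:alphabis} follows from \eqref{alpha} by substituting $f=\eta^q$ and using the exponent identity $\tfrac{q}{\gamma}+\tfrac{\gamma-1}{\gamma}=\tfrac{q+\gamma-1}{\gamma}$ to collapse the two powers of $\eta$ in the integrand.

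For part (ii), I would specialize the feedback map \eqref{def:phi} to the present setting, obtaining
\[
[\Phi x](\theta) = \bigl(\alpha\, b_0(\theta)\bigr)^{-1/\gamma}\,\eta(\theta)^{(q-1)/\gamma}\,\langle x, b_0\rangle,
\]
and then invoke Lemma~\ref{lm:gautovdyn}, which gives $\langle \hat x^{K_0}(t), b_0\rangle = \langle K_0, b_0\rangle\,\de^{gt}$, so that $\hat C(t,\theta) = [\Phi \hat x^{K_0}(t)](\theta)$ takes exactly the form \eqref{cc}. The integro-PDE \eqref{clebis} is then obtained by plugging $\Phi K(t,\cdot)$ into the state equation \eqref{statebis} in place of $C(t,\cdot)$: the factor of $\eta(\theta)$ coming from the operator $N$ combines with $\eta(\theta)^{(q-1)/\gamma}$ via $1+\tfrac{q-1}{\gamma}=\tfrac{q+\gamma-1}{\gamma}$, producing the nonlocal term with the correct exponent.

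All the nontrivial content is already contained in Theorem~\ref{teo:main} and its underlying Proposition~\ref{prop:exp}, so no genuine obstacle remains; the only substantive conceptual point is the identification $b_0^\star = b_0$ established above, which causes the abstract dual pairing with the strictly positive eigenvector of $L^\star$ to collapse into a bona fide $\dL^2$-integral against the corresponding eigenvector of $L$. The remaining work is the bookkeeping of constants and exponents sketched above.
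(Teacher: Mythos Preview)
Your proposal is correct and matches the paper's approach exactly: the paper simply states that once all points of Assumption~\ref{ass:explicit} are verified, Theorem~\ref{th:econexample} follows ``by making explicit'' Theorem~\ref{teo:main} in the setting of Assumption~\ref{ass:econpb}, and you carry out precisely this specialization. In fact your write-up is more detailed than the paper's, which gives no separate proof at all beyond the sentence preceding the statement.
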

}
\blu{\begin{remark}
The value $\displaystyle{g\coloneqq\frac{\lambda_0-\rho}{\gamma}}$  is the \emph{optimal growth rate of the economy}.
\end{remark}}

\blu{Our second aim is to use the stability result provided by Theorem~\ref{th:stability} to show that the optimal control $\hat{C}$ for problem~\eqref{eq:econPb0}, defined in~\eqref{cc}, is optimal also for problem~\eqref{eq:econP}, at least for a suitable set of initial conditions $K_0$.
To achieve this, we need to analyze the semigroup generated by the operator $B = L - N\Phi$. {By~\eqref{clebis}, its explicit expression is, for all $x \in \calc^2(\mathbf{S}^1)$,}
\begin{equation}\label{eq:opB}
(Bx)(\theta) = \sigma \frac{\d^2 x}{\d \theta^2}(\theta) + A(\theta) x (\theta) -\bigl(\alpha {b_0}(\theta)\bigr)^{-\frac 1\gamma} \left(\int_{\mathbf{S}^1} x(\xi) \, {b_0}(\xi) \, \d\xi\right)\eta(\theta)^{\frac{q+\gamma-1}{\gamma}}, \, \theta \in \mathbf{S}^1.
\end{equation}}
\blu{
\begin{lemma}\label{lem:cptsgr}
$\{\de^{tB}\}_{t\geq 0}$ is (immediately) compact.
\end{lemma}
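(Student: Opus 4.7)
The plan is to decompose $B = L - N\Phi$ into the generator $L$ of an immediately compact semigroup plus a bounded perturbation $N\Phi$, and then invoke the stability of immediate compactness under bounded perturbations. Accordingly, the proof splits into two natural steps.

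First, I would establish that $\{\de^{tL}\}_{t\geq 0}$ is itself immediately compact on $\calc(\mathbf{S}^1)$. The natural way is to show that $L$ has compact resolvent and generates an immediately norm-continuous (in fact analytic) semigroup, so that \citep[Ch.~II, Th.~4.29]{engelnagel:sgr} applies. Compactness of the resolvent is transparent from the spectral representation~\eqref{resolvent}: for any $\mu\in\varrho(L)$, the partial sums $\sum_{k=0}^n \frac{(b_k,\cdot)_{\dL^2(\mathbf{S}^1)}}{\lambda_k-\mu} b_k$ are finite rank operators on $\calc(\mathbf{S}^1)$, and since $\lambda_k\to-\infty$ they converge in operator norm to $(L-\mu)^{-1}$ by~\eqref{resolvent}; hence the resolvent is compact. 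Alternatively, the same conclusion follows from the compact embedding $\calc^2(\mathbf{S}^1)\hookrightarrow \calc(\mathbf{S}^1)$ via Arzel\`a--Ascoli, since $(L-\mu)^{-1}$ maps $\calc(\mathbf{S}^1)$ boundedly into $D(L)=\calc^2(\mathbf{S}^1)$. Immediate norm continuity (analyticity) of $\{\de^{tL}\}_{t\geq 0}$ follows because $L$ is a uniformly elliptic second-order operator with continuous coefficients on the compact manifold $\mathbf{S}^1$, hence generates an analytic semigroup on $\calc(\mathbf{S}^1)$ (see, e.g., \citep[B-II, Sec.~1]{arendt1986:possgr}).

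Second, looking at~\eqref{eq:opB}, the perturbation $N\Phi$ is explicitly a rank-one (hence bounded and compact) operator on $\calc(\mathbf{S}^1)$, namely
\begin{equation*}
(N\Phi x)(\theta) = \bigl(\alpha b_0(\theta)\bigr)^{-1/\gamma}\eta(\theta)^{\frac{q+\gamma-1}{\gamma}} \int_{\mathbf{S}^1} x(\xi)\, b_0(\xi)\, \d\xi, \quad \theta \in \mathbf{S}^1.
\end{equation*}
Since $b_0,\eta\in\calc(\mathbf{S}^1;(0,+\infty))$ and $\mathbf{S}^1$ is compact, the coefficient in front of the linear functional belongs to $\calc(\mathbf{S}^1)$ and $N\Phi\in\mathcal{L}(\calc(\mathbf{S}^1))$.

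To conclude, I would invoke the standard perturbation result that immediate compactness of a $C_0$-semigroup is preserved under bounded perturbation of its generator, provided the perturbed semigroup remains immediately norm continuous (see \citep[Ch.~III, Prop.~1.12 and Lemma~1.14]{engelnagel:sgr}). Since $B=L-N\Phi$ is a bounded perturbation of the analytic generator $L$, the perturbed semigroup $\{\de^{tB}\}_{t\geq 0}$ is still analytic, hence immediately norm continuous; combined with Step~1 this yields immediate compactness of $\{\de^{tB}\}_{t\geq 0}$. The only step that requires a bit of care is verifying the norm-continuity/analyticity persistence under the perturbation, but this is standard for bounded perturbations of analytic generators, so no real obstacle is expected.
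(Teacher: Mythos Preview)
Your proposal is correct and mirrors the paper's proof: analyticity of $\{\de^{tL}\}$ gives immediate norm continuity, compact resolvent follows from the Arzel\`a--Ascoli embedding $\calc^2(\mathbf{S}^1)\hookrightarrow\calc(\mathbf{S}^1)$, so \citep[Ch.~II, Th.~4.29]{engelnagel:sgr} yields immediate compactness of $\{\de^{tL}\}$, and then the bounded perturbation $N\Phi$ is handled by the standard preservation result (the paper cites \citep[Ch.~III, Prop.~1.16(i)]{engelnagel:sgr} directly, which is slightly cleaner than your detour through norm-continuity preservation). One small caveat: your first argument for compact resolvent via operator-norm convergence of the finite-rank partial sums is not justified by~\eqref{resolvent} alone, since that equation only asserts convergence in $|\cdot|_\infty$ for each fixed $x$; but your Arzel\`a--Ascoli alternative is exactly what the paper uses and suffices.
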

}
\begin{proof}
\blu{We start by showing that the semigroup generated by the operator $L$ on $X$ is (immediately) compact. In fact, $L$ generates an analytic semigroup (see, e.g., \citep[Ch.\,VI,\,Sec. 4]{engelnagel:sgr}),  and the resolvent of $L$ is non-empty; indeed, it is the complement set of the spectrum of $L$, which is a purely point spectrum $\sigma(L) = \{\lambda_0, \lambda_1, \dots\}$ by point~\textit{(\ref{prop:Leig})} of Proposition~\ref{prop:Lspectrum}.}

\blu{
From \citep[Ch. II, (4.26)]{engelnagel:sgr} we deduce that $\{\de^{tL}\}_{t\geq 0}$ is (immediately) norm continuous. Then, since the canonical injection $\iota \colon (\calc^2(\mathbf{S}^1), |\cdot|_{\calc^2(\mathbf{S}^1)}) \to (\calc(\mathbf{S}^1), |\cdot|_\infty)$ is compact by the Ascoli-Arzel\`a Theorem, we can apply \citep[Ch. II, Prop. 4.25]{engelnagel:sgr} and deduce that $L$ has compact resolvent. Finally, we learn from \citep[Ch. II, Th. 4.29]{engelnagel:sgr} that $\{\de^{tL}\}_{t\geq 0}$ is (immediately) compact.}

\blu{Next, notice that $B$ is an additive perturbation of $L$, obtained by subtracting to it the operator $N\Phi$, which is bounded. Therefore, from \citep[Ch. III, Prop. 1.16(i)]{engelnagel:sgr}, we deduce that $\{\de^{tB}\}_{t\geq 0}$ is (immediately) compact.}
\end{proof}

Next, we are going to prove that, under suitable assumptions, $g$ is a simple eigenvalue of $B$ and to characterize its associated eigenvector. To ease notations, let us define the constant
\begin{equation}\label{eq:alpha0}
\alpha_0 \coloneqq \alpha^{\frac{1}{1-\gamma}} = \biggl[\frac{\gamma}{\rho - \lambda_0(1-\gamma)} \int_{\mathbf{S}^1} \eta(\theta)^{\frac{q+\gamma-1}{\gamma}} b_0(\theta)^{\frac{\gamma-1}{\gamma}} \, \d\theta \biggr]^{\frac{\gamma}{1-\gamma}},
\end{equation}
and the function
\begin{equation}\label{eq:betastar0}
\beta(\theta) \coloneqq \alpha_0 b_0(\theta), \quad \theta \in \mathbf{S}^1.
\end{equation}
 Using \eqref{eq:alpha0} and \eqref{eq:betastar0}, we can rewrite the expression for operator $N\Phi$, {appearing in~\eqref{eq:opB} and defined in~\eqref{def:Nphi}, as follows}
\begin{equation}\label{eq:NPhi}
(N\Phi) x = \bigl(\alpha {b_0}\bigr)^{-\frac 1\gamma} \left(\int_{\mathbf{S}^1} x(\xi) \, {b_0}(\xi) \, \d\xi\right)\eta^{\frac{q+\gamma-1}{\gamma}} = {\beta}^{-\frac{1}{\gamma}} \eta^{\frac{q+\gamma-1}{\gamma}} \langle x, \beta \rangle, \quad x \in \calc(\mathbf{S}^1).
\end{equation}
\medskip
Notice that, under our assumptions, $\beta^{-\frac{1}{\gamma}} \eta^{\frac{q+\gamma-1}{\gamma}}\in\calc(\mathbf{S}^1)$, hence it admits a Fourier series expansion with uniform convergence with respect to $\{b_k\}_{k \in \mathbb N}$, with coefficients:
\begin{align}
&\beta_{0}:=(b_0,{\beta}^{-\frac{1}{\gamma}} \eta^{\frac{q+\gamma-1}{\gamma}})_{\dL^2(\mathbf{S}^1)}
= \frac{\mu_0}{\alpha_0},  & &\mbox{where} \ \mu_0 \coloneqq (\beta,{\beta}^{-\frac{1}{\gamma}} \eta^{\frac{q+\gamma-1}{\gamma}})_{\dL^2(\mathbf{S}^1)},\label{eq:beta0}
\\
&\beta_{k}:=(b_k,{\beta}^{-\frac{1}{\gamma}} \eta^{\frac{q+\gamma-1}{\gamma}})_{\dL^2(\mathbf{S}^1)}, & &k \geq 1. \label{eq:betak}
\end{align}
Using \eqref{eq:alpha0}, it is immediate to see that $\mu_{0}=\lambda_{0}-g$. 
Hence
$$
{\beta}^{-\frac{1}{\gamma}} \eta^{\frac{q+\gamma-1}{\gamma}}=\dfrac{\lambda_0-g}{\alpha_0} b_0+\sum_{k=1}^\infty \beta_k b_k,
$$
with convergence in $\calc(\mathbf{S}^1)$.
Consider the formal series
\begin{equation}\label{eq:e0}
 \dfrac{b_0}{\alpha_0} + \sum_{k=1}^{\infty} \dfrac{\beta_k}{\lambda_k - g} b_k,
\end{equation}
\begin{proposition}\label{ass:w}
Let \eqref{ass:rho} hold. The series \eqref{eq:e0} converges in $\calc(\mathbf{S}^1)$ and defines a function of  $\calc^2(\mathbf{S}^1)$.
\end{proposition}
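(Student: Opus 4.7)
The plan is to recognize the series in \eqref{eq:e0} as the expansion of $(L-g)^{-1}$ applied to the continuous function
$$h \coloneqq \beta^{-\frac{1}{\gamma}} \eta^{\frac{q+\gamma-1}{\gamma}} \in \calc(\mathbf{S}^1),$$
and to deduce the claim directly from the resolvent representation in point~\textit{(v)} of Proposition~\ref{prop:Lspectrum}.

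First, I would verify that $g \in \varrho(L)$, i.e.~that $g \notin \{\lambda_k\}_{k \geq 0}$. For $k=0$ this follows from assumption~\eqref{ass:rho}: indeed
$$ g = \dfrac{\lambda_0 - \rho}{\gamma} < \dfrac{\lambda_0 - \lambda_0(1-\gamma)}{\gamma} = \lambda_0, $$
so $g < \lambda_0 = \max \sigma(L)$. For indices $k \geq 1$, the non-resonance condition $g \neq \lambda_k$ is the point where the main obstacle lies; this typically has to be imposed as a (generic) standing assumption on the parameters, since otherwise the term $\beta_k/(\lambda_k - g)$ in \eqref{eq:e0} is ill-defined unless one can additionally show $\beta_k = 0$ for the resonant index.

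Once $g \in \varrho(L)$ is granted, I would invoke point~\textit{(v)} of Proposition~\ref{prop:Lspectrum} with $\mu = g$: this gives at once that the operator $(L-g)^{-1} \colon \calc(\mathbf{S}^1) \to \calc^2(\mathbf{S}^1)$ is bounded and admits the uniformly convergent representation
$$(L-g)^{-1} h = \sum_{k=0}^{\infty} \frac{(b_k, h)_{\dL^2(\mathbf{S}^1)}}{\lambda_k - g} \, b_k, \quad \text{in } |\cdot|_\infty.$$
The Fourier coefficients of $h$ are given by \eqref{eq:beta0}--\eqref{eq:betak}; in particular, using that $\mu_0 = \lambda_0 - g$ (as already noted right after \eqref{eq:betak}), the zeroth coefficient simplifies to
$$\frac{\beta_0}{\lambda_0 - g} = \frac{\mu_0/\alpha_0}{\lambda_0 - g} = \frac{1}{\alpha_0}.$$
Hence the above series coincides exactly with \eqref{eq:e0}, which therefore converges uniformly on $\mathbf{S}^1$, and its limit lies in $\calc^2(\mathbf{S}^1)$ since it equals $(L-g)^{-1} h \in \operatorname{Range}((L-g)^{-1}) = D(L) = \calc^2(\mathbf{S}^1)$.

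The whole argument is thus essentially a reinterpretation of \eqref{eq:e0} as the image of a continuous function under the resolvent of $L$ at the regular value $g$; the only genuine work is the verification that $g$ is not an eigenvalue of $L$, which, beyond the easy case $k=0$ handled by \eqref{ass:rho}, amounts to a non-resonance requirement on the parameters $(\rho,\gamma,\sigma,A)$.
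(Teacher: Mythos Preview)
Your approach is exactly the one taken in the paper: the proof identifies the series \eqref{eq:e0} with $(L-g)^{-1}\bigl(\beta^{-\frac{1}{\gamma}}\eta^{\frac{q+\gamma-1}{\gamma}}\bigr)$ via the resolvent representation \eqref{resolvent}, after observing that the zeroth Fourier coefficient simplifies by $\mu_0=\lambda_0-g$.

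Your caveat about non-resonance for $k\geq 1$ is well-taken; the paper's proof simply asserts ``by \eqref{ass:rho}, $g$ is not an eigenvalue of $L$'' without addressing $g=\lambda_k$ for $k\geq 1$, so you are in fact being more careful than the paper on this point (the subsequent Proposition~\ref{prop:spectrumB} does add the hypothesis $g>\lambda_1$, which resolves it there).
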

	
\begin{proof}
By \eqref{ass:rho}, $g$ is not an eigenvalue of $L$; moreover, ${\beta}^{-\frac{1}{\gamma}} \eta^{\frac{q+\gamma-1}{\gamma}}\in\calc(\mathbf{S}^1)$. Using \eqref{resolvent}, \eqref{eq:beta0}, \eqref{eq:betak}, and recalling that $\mu_0 = \lambda_0 - g$, we get
\begin{equation}\label{eq:w}
(L-g)^{-1} \left({\beta}^{-\frac{1}{\gamma}} \eta^{\frac{q+\gamma-1}{\gamma}}\right) = \sum_{k=0}^\infty \dfrac{(b_k,{\beta}^{-\frac{1}{\gamma}} \eta^{\frac{q+\gamma-1}{\gamma}})_{\dL^2(\mathbf{S}^1)}}{\lambda_k - g} \, b_k = \dfrac{b_0}{\alpha_0} + \sum_{k=1}^{\infty} \dfrac{\beta_k}{\lambda_k - g} b_k. \qedhere
\end{equation}
\end{proof}

Given the proposition above, we set
\begin{equation}\label{eq:e00}
w \coloneqq \dfrac{b_0}{\alpha_0} + \sum_{k=1}^{\infty} \dfrac{\beta_k}{\lambda_k - g} b_k\in \calc^2(\mathbf{S}^1).
\end{equation}

We are ready to state the following result on the spectrum of operator $B$.
\begin{proposition}\label{prop:spectrumB}
Let \eqref{ass:rho} hold and assume that $g > \lambda_1$.
Then, we have the following facts.
\begin{itemize}
\item[(i)] The spectrum of the operator $B$ is given by $\sigma(B) = \{g\}\cup \{\lambda_k\}_{k \in \mathbb N \setminus \{0\}}$, and hence $g$ is the dominant eigenvalue of $B$;
\item[(ii)] $g$ is a simple eigenvalue of $B$;
\item[(iii)] $\ker (B-g)=\mathrm{Span}\left\{{w}\right\}$, \blu{where $w$ is given in~\eqref{eq:e00}.}
\item[(iv)] The spectral projection $P$ corresponding to the spectral set $\{g\} \subset \sigma(B)$ is the bounded and linear operator:
\begin{equation}\label{eq:opPgen}
Px = \langle x, \beta \rangle w, \quad x \in \calc(\mathbf{S}^1).
\end{equation}
\end{itemize}
\end{proposition}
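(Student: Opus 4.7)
The plan is to exploit the rank-one nature of $N\Phi$: by~\eqref{eq:NPhi}, writing $h \coloneqq \beta^{-1/\gamma} \eta^{(q+\gamma-1)/\gamma}$, one has $(N\Phi) x = \langle x, \beta \rangle h$. I would first record two identities used throughout: $\langle h, \beta \rangle = \alpha_0 \beta_0 = \lambda_0 - g$ from~\eqref{eq:alpha0}--\eqref{eq:beta0}, and $\langle w, \beta \rangle = \alpha_0 (w, b_0)_{\dL^2(\mathbf{S}^1)} = 1$, from~\eqref{eq:e00} and orthonormality of $\{b_k\}$.

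For part~(i), I would first exhibit the claimed eigenvalues explicitly: for $k \geq 1$, orthogonality gives $\langle b_k, \beta \rangle = 0$, whence $B b_k = L b_k = \lambda_k b_k$; for $g$, the relation $(L-g) w = h$ from~\eqref{eq:w} combined with $\langle w, \beta \rangle = 1$ yields $B w = (gw + h) - h = g w$. Thus $\{g\} \cup \{\lambda_k\}_{k \geq 1} \subseteq \sigma_p(B)$. For the reverse inclusion, Lemma~\ref{lem:cptsgr} and \citep[Ch.~II, Th.~4.29]{engelnagel:sgr} imply that $B$ has compact resolvent, so $\sigma(B) = \sigma_p(B)$. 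For any eigenvalue $\mu \in \varrho(L)$, the equation $B x = \mu x$ rewrites as $x = \langle x, \beta \rangle (L-\mu)^{-1} h$; pairing with $\beta$ and using~\eqref{resolvent} reduces the problem to the scalar characteristic equation
\begin{equation*}
\langle (L-\mu)^{-1} h, \beta \rangle = \frac{\lambda_0 - g}{\lambda_0 - \mu} = 1,
\end{equation*}
whose unique solution is $\mu = g$. To rule out $\lambda_0$: if $B x = \lambda_0 x$, pairing with $b_0$ and using $L^\star b_0 = \lambda_0 b_0$ yields $\langle x, \beta \rangle (b_0, h)_{\dL^2(\mathbf{S}^1)} = 0$; since $(b_0, h)_{\dL^2(\mathbf{S}^1)} = \beta_0 \neq 0$, this forces $\langle x, \beta \rangle = 0$, whence $x \in \ker(L - \lambda_0) = \mathrm{Span}\{b_0\}$, contradicting $\langle x, \beta \rangle = 0$. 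The hypothesis $g > \lambda_1$ makes $g$ dominant.

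For parts~(ii) and~(iii), the preceding analysis shows that any $x \in \ker(B - g)$ must satisfy $\langle x, \beta \rangle \neq 0$ and $x = \langle x, \beta \rangle w$, giving $\ker(B - g) = \mathrm{Span}\{w\}$ and geometric simplicity. For algebraic simplicity, I would suppose $(B-g)^2 x = 0$, so $(B-g) x = c w$ for some $c \in \R$; pairing with $\beta$ and using $\langle (L-g) x, \beta \rangle = (\lambda_0 - g) \langle x, \beta \rangle$ (which follows from $L^\star \beta = \lambda_0 \beta$), together with the two recorded identities, yields $0 = c$, so $x \in \ker(B - g)$.

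For part~(iv), a Sherman--Morrison computation, exploiting $(L^\star - \mu)^{-1} \beta = \beta/(\lambda_0 - \mu)$ (valid since $\beta$ is an $L^\star$-eigenvector lying in $D(L^\star)$), gives, for $\mu \in \varrho(B) \cap \varrho(L)$,
\begin{equation*}
(B - \mu)^{-1} y = (L - \mu)^{-1} y + \frac{\langle y, \beta \rangle}{g - \mu}\, (L - \mu)^{-1} h.
\end{equation*}
Substituting this into~\eqref{eq:explicitP} and observing that $(L-\mu)^{-1}$ is holomorphic at $g$, only the rank-one term contributes; since $(L-\mu)^{-1} h$ is also holomorphic at $g$ with value $w$, the residue calculation gives $P y = \langle y, \beta \rangle w$. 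The main technical subtlety is the rigorous justification of the Sherman--Morrison inversion in the setting of the unbounded operator $L$, but this reduces to a direct verification once one uses $\beta \in D(L^\star)$ and the boundedness and rank-one nature of $N\Phi$.
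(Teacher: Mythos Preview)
Your proof is correct and, in several places, takes a cleaner route than the paper's.

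Both arguments ultimately rest on the same resolvent identity
\[
(B-\mu)^{-1}y = (L-\mu)^{-1}y + \frac{\langle y,\beta\rangle}{g-\mu}\,(L-\mu)^{-1}h,
\]
but they arrive at and exploit it differently. The paper verifies this formula by a direct Fourier computation (expanding everything against $\{b_k\}$ and tracking coefficients), then reads off from it both that $g$ is a simple pole and, after another Fourier expansion inside the contour integral, the expression for $P$. Your approach instead leans on the rank-one structure: you get the identity via a Sherman--Morrison argument using only $L^\star\beta=\lambda_0\beta$, and then your computation of $P$ is shorter because you observe that $(L-\mu)^{-1}y$ and $(L-\mu)^{-1}h$ are holomorphic at $g$, so the residue is immediate. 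For part~(ii) the difference is more marked: the paper deduces algebraic simplicity from the resolvent formula (simple pole), whereas you give a direct generalized-eigenvector argument, pairing $(B-g)x=cw$ against $\beta$ and using the two identities $\langle h,\beta\rangle=\lambda_0-g$, $\langle w,\beta\rangle=1$ to force $c=0$. For part~(i) the paper invokes \cite[Ch.~IV, Prop.~4.2]{engelnagel:sgr} and a Fourier-coefficient system to exclude $\lambda_0$, while you exhibit the eigenvectors $b_k$ ($k\geq1$) and $w$ directly and reduce the remaining analysis to the scalar characteristic equation $(\lambda_0-g)/(\lambda_0-\mu)=1$.

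What your approach buys is portability: almost nothing uses the Sturm--Liouville Fourier basis beyond $L^\star\beta=\lambda_0\beta$ and orthogonality of $b_0$ to the $b_k$ ($k\geq1$), so the argument would transfer to other rank-one perturbations of generators with a simple dominant eigenvalue. The paper's Fourier computations are more concrete but more tied to this particular setting.
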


\begin{proof}
\noindent(i)
We start showing that $g \in \sigma(B)$. By \eqref{ass:rho}, $g$ is not an eigenvalue of $L$. Thus,  \citep[Ch.\,IV, Prop.\,4.2]{engelnagel:sgr} entails that $g$ is an eigenvalue of $B = L - N\Phi$ if and only if $1$ is an eigenvalue of $(L-g)^{-1} N\Phi$, i.e., if and only if the equation
\begin{equation}\label{eq:eig1}
(L-g)^{-1} N\Phi x =x,
\end{equation}
admits a nontrivial solution $x \in \calc(\mathbf{S}^1)$. By \eqref{eq:NPhi} and \eqref{eq:w}, $(L-g)^{-1} N\Phi x = \langle x, \beta \rangle w$, for all $x\in \calc(\mathbf{S}^1)$, and hence \eqref{eq:eig1} becomes $\langle x, \beta \rangle w =x$.
By \eqref{eq:e00} we have $\langle w, \beta \rangle = (w, \beta)_{\dL^2(\mathbf{S}^1)} = (b_0,b_0)_{\dL^2(\mathbf{S}^1)}=1$. Therefore, we obtain that $w$ is a nontrivial solution to \eqref{eq:eig1} and, thus, $g$ is an eigenvalue of $B$.

Next, we prove that $\lambda_0 \notin \sigma(B)$. 
Suppose, by contradiction, that $\lambda_0$ is an eigenvalue of $B$ and that $x$ is an associated eigenfunction.
Let  $x_k \coloneqq (x, b_k)_{\dL^2(\mathbf{S}^1)}$. By \eqref{eq:Lsumswap}, recalling \eqref{eq:NPhi} and that ${\beta}^{-\frac{1}{\gamma}} \eta^{\frac{q+\gamma-1}{\gamma}}$ admits a Fourier series expansion whose coefficients are given in \eqref{eq:beta0}--\eqref{eq:betak}, we get
\begin{equation*}
0 = B x - \lambda_0 x = \sum_{k=0}^\infty x_k \lambda_k b_k -\left(\dfrac{\mu_0}{\alpha_0} b_0 + \sum_{k=1}^\infty \beta_k b_k \right)x_0 \alpha_0 - \sum_{k=0}^\infty \lambda_0 x_k b_k.
\end{equation*}
This equation is satisfied if and only if
\begin{equation*}
\left\{
\begin{aligned}
&\lambda_0 x_0 - \mu_0 x_0 = \lambda_0 x_0, \\
&\lambda_k x_k - x_0 \alpha_0 \beta_k = \lambda_0 x_k , \qquad k\in\mathbb{N}\setminus\{0\}.
\end{aligned}
\right.
\end{equation*}
Since $\mu_0 = \lambda_0 - g$, which entails $\mu_0 \neq 0$ by \eqref{ass:rho}, the first equation implies $x_0 = 0$. Substituting  into the second equation, we get $x_k(\lambda_k -\lambda_0) = 0$, for all $k \geq 1$. Since $x\neq 0$, there must be an index $k \geq 1$ such that $\lambda_k = \lambda_0$, a contradiction.

Finally, to show that $\lambda_k$, $k \geq 1$, are the only other eigenvalues of $B$, it is enough to prove that $\mu \in \mathbb C \setminus\{g, \lambda_0\}$ is an eigenvalue of $B$ if and only if $\mu$ is an eigenvalue of $L$. This is readily verified with computations similar to those above.\smallskip

\noindent(ii) We start by recalling that, since $\{\de^{tB}\}_{t\geq 0}$ is a compact semigroup (Lemma~\ref{lem:cptsgr}), we know from Lemma~\ref{lem:cptsgrspectrum} that $g$ is a pole of the resolvent of $B$ of finite algebraic multiplicity.
Let $h \in \calc(\mathbf{S}^1)$, $\mu \in \mathbb C \setminus \sigma(B)$, set $h_k \coloneqq (h, b_k)_{\dL^2(\mathbf{S}^1)}$ for $k \in \mathbb N$.
We claim that
\begin{equation}\label{eq:resolvB}
(B-\mu)^{-1} h = (L-\mu)^{-1}\left(h + \dfrac{h_0 \alpha_0}{g-\mu} {\beta}^{-\frac{1}{\gamma}} \eta^{\frac{q+\gamma-1}{\gamma}}\right).
\end{equation}
First, notice that, since $\mu\neq g$, we have $h + \dfrac{h_0 \alpha_0}{g-\mu} {\beta}^{-\frac{1}{\gamma}} \eta^{\frac{q+\gamma-1}{\gamma}}\in\calc(\mathbf{S}^1)$, so the right hand side above is well defined and belongs to $\calc^2(\mathbf{S}^1)$.
Next, recalling \eqref{resolvent} and given that $N\Phi$ is a bounded linear operator, we get:
{\allowdisplaybreaks
\begin{align}
\notag 
&\mathop{\phantom{=}} N\Phi (L-\mu)^{-1} \left(h + \dfrac{h_0 \alpha_0}{g-\mu} {\beta}^{-\frac{1}{\gamma}} \eta^{\frac{q+\gamma-1}{\gamma}}\right) \\
\label{eq:2} 
&= {\beta}^{-\frac{1}{\gamma}} \eta^{\frac{q+\gamma-1}{\gamma}} \Big\langle \!\sum_{k=0}^\infty \dfrac{h_k}{\lambda_k - \mu} b_k, \beta \!\Big\rangle \!+\! \dfrac{h_0 \alpha_0}{g-\mu} {\beta}^{-\frac{1}{\gamma}} \eta^{\frac{q+\gamma-1}{\gamma}} \Big\langle \dfrac{\mu_0}{\alpha_0(\lambda_0 - \mu)} b_0 + \sum_{k=1}^\infty \dfrac{\beta_k}{\lambda_k - \mu} b_k, \beta\Big\rangle \\
\notag
&= \dfrac{h_0 \alpha_0(g-\mu+\mu_0)}{(g-\mu)(\lambda_0 - \mu)} {\beta}^{-\frac{1}{\gamma}} \eta^{\frac{q+\gamma-1}{\gamma}}.
\end{align}}
Therefore, recalling that $\mu_0 = \lambda_0 - g$, we obtain
{\allowdisplaybreaks
\begin{align*}
&\mathop{\phantom{=}} (B - \mu)(L-\mu)^{-1}\left(h + \dfrac{h_0 \alpha_0}{g-\mu} {\beta}^{-\frac{1}{\gamma}} \eta^{\frac{q+\gamma-1}{\gamma}}\right)\\& = (L-\mu)(L-\mu)^{-1} \left(h + \dfrac{h_0 \alpha_0}{g-\mu} {\beta}^{-\frac{1}{\gamma}} \eta^{\frac{q+\gamma-1}{\gamma}}\right) - N\Phi (L-\mu)^{-1} \left(h + \dfrac{h_0 \alpha_0}{g-\mu} {\beta}^{-\frac{1}{\gamma}} \eta^{\frac{q+\gamma-1}{\gamma}}\right) = h.
\end{align*}}
This shows \eqref{eq:resolvB}, i.e., it provides an explicit expression of the resolvent of $B$.
Finally, recalling that the operator $(L-\mu)^{-1}$ is well defined for $\mu = g$, since $g$ is not an eigenvalue of $L$ under \eqref{ass:rho} and under the assumption $g>\lambda_1$, we deduce from \eqref{eq:resolvB} that $g$ is a simple pole of the resolvent of $B$.\smallskip

\noindent(iii) Using \eqref{eq:NPhi} and \eqref{eq:w}--\eqref{eq:e00}, it can be immediately proved that $w$ is an eigenfunction of $B$ associated to $g$.\smallskip

\noindent(iv) We use the explicit formula for $P$ given in Remark~\ref{rem:explicitP}. By \eqref{eq:explicitP} and \eqref{eq:resolvB} we have that for any $x \in \calc(\mathbf{S}^1)$, and setting $x_k \coloneqq (x,b_k)_{\dL^2(\mathbf{S}^1)}$:
\begin{equation}\label{eq:explicitP1}
Px = -\dfrac{1}{2\pi\mathrm i} \int_\gamma (B-\mu)^{-1}x \, \d \mu = -\dfrac{1}{2\pi\mathrm i} \int_\gamma (L-\mu)^{-1}\left(x + \dfrac{x_0 \alpha_0}{g-\mu} {\beta}^{-\frac{1}{\gamma}} \eta^{\frac{q+\gamma-1}{\gamma}}\right) \, \d \mu,
\end{equation}
where we take $\gamma$ to be the simple curve given by the positively oriented boundary of the disk centered at $g$, with radius small enough so that it does not encircle any of the eigenvalues $\{\lambda_k\}_{k \in \mathbb{N}}$ of $L$, which are poles of the resolvent $(L-\mu)^{-1}$.
Substituting \eqref{resolvent} into \eqref{eq:explicitP1}, and recalling \eqref{eq:beta0}, \eqref{eq:betak}, we get:
{\allowdisplaybreaks
\begin{align}
\nonumber Px
&= -\dfrac{1}{2\pi\mathrm i} \int_\gamma \sum_{k \in \mathbb N} \frac{x_k}{\lambda_k - \mu} b_k \, \d \mu - \dfrac{x_0\alpha_0}{2\pi\mathrm i} \int_\gamma \sum_{k \in \mathbb N} \frac{(b_k, {\beta}^{-\frac{1}{\gamma}} \eta^{\frac{q+\gamma-1}{\gamma}})_{\dL^2(\mathbf{S}^1)}}{(\lambda_k - \mu)(g-\mu)} b_k \, \d \mu
\label{eq:explicitP2}
\\
&= -\sum_{k \in \mathbb N} x_k b_k \, \dfrac{1}{2\pi\mathrm i} \int_\gamma \frac{1}{\lambda_k - \mu} \, \d \mu -
x_0 \mu_0 b_0 \, \dfrac{1}{2\pi\mathrm i} \int_\gamma \frac{1}{(\lambda_0 - \mu)(g-\mu)} \, \d \mu
\\
\notag
&\qquad - \dfrac{x_0\alpha_0}{2\pi\mathrm i} \sum_{k \in \mathbb N \setminus \{0\}} \beta_k b_k \int_\gamma \frac{1}{(\lambda_k - \mu)(g-\mu)} \, \d \mu.
\end{align}}
Since any $\lambda_k$, {$k \in \mathbb N$}, lies outside of the curve $\gamma$, we have $\int_\gamma \frac{1}{\lambda_k - \mu} \, \d \mu = 0$.
Moreover, since the residues at $g$ of the functions $\frac{1}{(\lambda_k - \mu)(g-\mu)}$, {for $k \in \mathbb N$}, are equal to $-\frac{1}{\lambda_k - g}$, and recalling that $\gamma$ is a simple curve, we get:
\begin{equation*}
\int_\gamma \frac{1}{(\lambda_k - \mu)(g-\mu)} \, \d \mu = -\frac{2\pi\mathrm i}{\lambda_k-g}, \qquad k \in \mathbb N.
\end{equation*}
Therefore, since $\lambda_0 - g = \mu_0$, \eqref{eq:explicitP2} becomes:
\begin{equation*}
Px = \dfrac{x_0 \mu_0}{\lambda_0 - g} b_0 + \sum_{k \in \mathbb N \setminus \{0\}} \dfrac{x_0\alpha_0\beta_k}{\lambda_k - g}  b_k = x_0 \alpha_0 \left(\dfrac{b_0}{\alpha_0} + \sum_{k \in \mathbb N \setminus \{0\}} \dfrac{\beta_k}{\lambda_k - g}  b_k \right) = \langle x, \beta \rangle w.\qedhere
\end{equation*}
\end{proof}
\blu{By the previous results, we are ready to apply Theorem~\ref{th:stability} to the economic problem studied here, obtaining the following.}
\begin{theorem}\label{th:econconv2}
Let \eqref{ass:rho} hold and assume also that $g>\lambda_1 $. Let $K_0\in \calc(\mathbf{S}^1)_{++}$, denote by  $\hat K^{K_0}$  the solution to the linear integro-PDE~\eqref{clebis}. Then,
\begin{equation}\label{eq:convgen}
\left|\hat K^{K_0}_g(t,\cdot) - \langle K_0, \beta \rangle w \right|_\infty \leq M \, \de^{-(g-\lambda_1) t} |K_0 - \langle K_0, \beta \rangle w|_\infty, \ \ \ \forall t\geq 0,
\end{equation}
where $\beta$ is defined by \eqref{eq:betastar0}, $\hat K^{K_0}_g(t,\cdot):= \de^{-gt} \hat K^{K_0}(t,\cdot)$ and
\begin{equation*}
M \coloneqq 1 + |{w}|_\infty \int_0^{2\pi} \beta(\theta) \, \d\theta.
\end{equation*}
In particular,
$$\hat{K}_g^{K_0}(t,\cdot)\stackrel{t\to +\infty}{\longrightarrow} \langle K_0, \beta \rangle w \ \ \ \text{in}\  \ \calc(\mathbf{S}^1).$$
\end{theorem}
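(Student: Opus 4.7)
The plan is to reduce the statement to a direct application of the abstract stability result in Theorem~\ref{th:stability}. First, I would observe that the integro-PDE~\eqref{clebis} is precisely the explicit form of the closed-loop equation~\eqref{cle} written for the operator $B$ defined in~\eqref{eq:opB}. Consequently, $\hat K^{K_0}(t,\cdot) = e^{tB}K_0$ is the mild solution evaluated pointwise, so that $\hat K^{K_0}_g(t,\cdot) = e^{-gt}e^{tB}K_0 = e^{t(B-g)}K_0$. This identification places the PDE estimate squarely within the abstract framework of Section~\ref{sec:ss}.

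Next, I would verify the hypotheses of Theorem~\ref{th:stability} with $\nu_0 = g$: by Lemma~\ref{lem:cptsgr}, $\{e^{tB}\}_{t\geq 0}$ is immediately (hence eventually) compact; by Proposition~\ref{prop:spectrumB}(ii), $g$ is a first-order pole of the resolvent of $B$. Using Proposition~\ref{prop:spectrumB}(iv), the spectral projection is $Px_0 = \langle x_0, \beta\rangle w$, so that $PK_0 \in \ker(B-g)$ yields $e^{t(B-g)}PK_0 = PK_0$. Decomposing $K_0 = PK_0 + (I-P)K_0$, this immediately gives the identity
\[
\hat K^{K_0}_g(t,\cdot) - \langle K_0, \beta\rangle w \,=\, e^{t(B-g)}(I-P)K_0.
\]

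Then I would apply the estimate of Theorem~\ref{th:stability} to $K_0$, using that $P(I-P) = 0$, to obtain a bound of the form $M\,e^{-\epsilon t}|(I-P)K_0|_\infty$ with abstract constants $M, \epsilon$. The explicit decay rate $\epsilon = g - \lambda_1$ follows from Proposition~\ref{prop:spectrumB}(i): since $\sigma(B)=\{g\}\cup\{\lambda_k\}_{k\geq 1}$ with $g>\lambda_1$, the restriction of $B$ to $(I-P)X$ has spectral bound $\lambda_1$, which, thanks to compactness of the semigroup, coincides with its growth bound. For the explicit $M = 1 + |w|_\infty \int_0^{2\pi}\beta(\theta)\,d\theta$, observe from the formula $Px = \langle x, \beta\rangle w$ that $|P|_{\mathcal L(X)} \leq |w|_\infty \int_0^{2\pi}\beta(\theta)\,d\theta$, whence $|I-P|_{\mathcal L(X)} \leq M$. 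Convergence in $\calc(\mathbf S^1)$ as $t\to\infty$ follows from the exponential decay.

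The main obstacle is pinning down the \emph{exact} constant $M$ beyond the abstract existence statement of Theorem~\ref{th:stability}. A cleaner route exploits the structural observation that, for every $k\geq 1$, $\langle b_k,\beta\rangle = \alpha_0\,(b_k,b_0)_{\dL^2(\mathbf{S}^1)} = 0$; by~\eqref{eq:NPhi} this gives $(N\Phi)b_k = 0$, hence $Bb_k = Lb_k = \lambda_k b_k$, so the Fourier basis $\{b_k\}_{k\geq 1}$ diagonalizes $B$ on $\ker\langle\cdot,\beta\rangle$. Since $(I-P)K_0$ lies in this subspace (as $\langle (I-P)K_0,\beta\rangle = \langle K_0,\beta\rangle(1-\langle w,\beta\rangle) = 0$), one first establishes a direct $\dL^2$ estimate $|e^{t(B-g)}y|_{\dL^2(\mathbf{S}^1)} \leq e^{(\lambda_1-g)t}|y|_{\dL^2(\mathbf{S}^1)}$ for $y\in(I-P)X$, and then promotes it to the sup-norm bound with constant $M$ via the smoothing property of the analytic semigroup $\{e^{tL}\}_{t>0}$ together with the triangle inequality for $I-P$. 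This is the step that requires the most care.
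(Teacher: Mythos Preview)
Your overall strategy is correct and coincides with the paper's: reduce to Theorem~\ref{th:stability} via Lemma~\ref{lem:cptsgr} and Proposition~\ref{prop:spectrumB}, identify $PK_0 = \langle K_0,\beta\rangle w$, and bound $|I-P|_{\mathcal L(\calc(\mathbf{S}^1))}$ by the stated $M$. The gap is in the decay rate. Saying that the restriction of $B$ to $(I-P)X$ has spectral bound $\lambda_1$ and that, by compactness, this equals the growth bound yields only: for every $\delta>0$ there exists $M_\delta\geq 1$ with $\big|\,\de^{tB}|_{\ker P}\,\big|_{\mathcal L}\leq M_\delta\,\de^{(\lambda_1+\delta)t}$. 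The infimum defining the growth bound is in general not attained, so this argument produces neither the exact exponent $g-\lambda_1$ nor the specific prefactor $M$ in~\eqref{eq:convgen}; the constant you would obtain is $M_\delta$ (or $M_\delta\cdot M$), with $\delta>0$ arbitrary but not zero. Your alternative $\dL^2$-plus-smoothing route does not close this either: smoothing bounds of the form $|\de^{tL}|_{\mathcal L(\dL^2,\calc)}\leq C\,t^{-\alpha}$ introduce an extra constant unrelated to $M$ and a singularity at $t=0$, so one does not recover~\eqref{eq:convgen} as stated.

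The paper's device is sharper and exploits precisely the structural observation you make, namely that $B|_{\ker P}=L|_{\ker P}$ because $N\Phi$ vanishes on $\ker\langle\cdot,\beta\rangle$. It then applies the Hille--Yosida generation theorem directly in $\calc(\mathbf{S}^1)$: from Sturm--Liouville theory for the periodic operator $L$, the resolvent $(L|_{\ker P}-\mu)^{-1}$ has operator norm in $\calc(\mathbf{S}^1)$ equal to its spectral radius $\sup_{k\geq 1}|\lambda_k-\mu|^{-1}=(\mu-\lambda_1)^{-1}$ for $\mu>\lambda_1$. This delivers the contraction estimate $\big|\,\de^{tB}|_{\ker P}\,\big|_{\mathcal L(\calc(\mathbf{S}^1))}\leq \de^{\lambda_1 t}$ with constant~$1$, after which the factor $M$ arises solely from $|I-P|$, exactly as in your computation. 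Without this Sturm--Liouville input (or an equivalent argument verifying the Hille--Yosida resolvent bound with constant~$1$), the explicit constants in~\eqref{eq:convgen} are out of reach.
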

\begin{proof}
By Proposition~\ref{prop:spectrumB} and Theorem~\ref{th:stability},  we get
\begin{equation}\label{eq:solconvgen}
\left|\hat K^{K_0}_g(t,\cdot) - \langle K_0, \beta \rangle w \right|_\infty \leq M \, \de^{-\varepsilon t} |K_0 - \langle K_0, \beta \rangle w|_\infty, \ \ \ \forall t\geq 0.
\end{equation}

We now compute explicitly the constants $M$ and $\varepsilon$ appearing in \eqref{eq:solconvgen}. From the proof of Theorem~\ref{th:stability} we know that these two constants come from the estimate provided in \eqref{est1}, which we can now refine.

For the sake of clarity, define $S(t) \coloneqq \de^{tB}, \, t \geq 0$.
Since $w$ is an eigenvector of $B$ with eigenvalue $g$, it follows from \citep[Theorem~3.6, Ch. IV, p. 276]{engelnagel:sgr} that $\de^{gt}$ is an eigenvalue of $S(t)$ with eigenvector $w$.  Using the fact that $\mbox{Range}\, P=\mbox{Span}\{w\}$, we have $S(t)P = \de^{gt} P$, for all $t \geq 0$.
Moreover,  since $\mbox{Range}(1-P)=\mathrm{ker}\, P$, we have $S(t)(1-P) = S(t)_{|\mathrm{ker}\, P} (1-P)$, for all $t \geq 0$,
where $S(t)_{|\mathrm{ker}\, P}$ indicates the restriction of the operator $S(t)$ to $\mathrm{ker}\, P$. 
Hence, denoting by $|\cdot|_{\mathcal{L}(\calc(\mathbf{S}^1))}$ the operator norm on the space of linear continuous operators $\mathcal{L}(\calc(\mathbf{S}^1))$,
\begin{align}\label{eq:stabest}
&\mathop{\phantom{=}}|\de^{-gt}S(t)-P|_{\mathcal{L}(\calc(\mathbf{S}^1))} = |\de^{-gt}(S(t)-S(t)P)|_{\mathcal{L}(\calc(\mathbf{S}^1))} = \de^{-gt}|S(t)-S(t)P|_{\mathcal{L}(\calc(\mathbf{S}^1))}
\\
&=\de^{-gt}|S(t)(1-P)|_{\mathcal{L}(\calc(\mathbf{S}^1))} \leq \de^{-gt}|S(t)_{|\mathrm{ker}\, P}|_{\mathcal{L}(\calc(\mathbf{S}^1))} \, |(1-P)|_{\mathcal{L}(\calc(\mathbf{S}^1))}, \qquad t \geq 0. \notag
\end{align}
We now  estimate $|S(t)_{|\mathrm{ker}\, P}|_{\mathcal{L}(\calc(\mathbf{S}^1))}$ and $|(1-P)|_{\mathcal{L}(\calc(\mathbf{S}^1))}$.

We begin with the first quantity. We aim to prove, with the aid of Hille-Yosida's theorem, that $|S(t)_{|\mathrm{ker}\, P}|$ satisfies
\begin{equation}\label{eq:Stestimate}
|S(t)_{|\mathrm{ker}\, P}|_{\mathcal{L}(\calc(\mathbf{S}^1))} \leq \de^{\lambda_1 t}, \quad t \geq 0.
\end{equation}
From the discussion on subspace semigroups given in \citep[Par. II.2.3]{engelnagel:sgr}, we deduce that the infinitesimal generator of the semigroup $\{S(t)_{|\mathrm{ker}\, P}\}_{t \geq 0}$ is the operator $B_{|\mathrm{ker}\, P} \equiv L_{|\mathrm{ker}\, P}$, whose domain is $D(B_{|\mathrm{ker}\, P}) = D(L_{|\mathrm{ker}\, P}) = D(L) \cap \mathrm{ker}\, P$. 
Spectra of $B_{|\mathrm{ker}\, P}$ and $L_{|\mathrm{ker}\, P}$ must coincide, and clearly $\sigma(L_{|\mathrm{ker}\, P}) \subset \sigma(L)$.
In particular, recalling that $\mathrm{ker}\, P$ is the closed subspace of $\calc(\mathbf{S}^1)$ of functions $x$ such that $\langle x, {\beta} \rangle = 0$, we have that $\sigma(L_{|\mathrm{ker}\, P}) = \{\lambda_k\}_{k \in \mathbb N \setminus\{0\}}$. Indeed, on the one hand, $\lambda_0$ cannot be an eigenvalue of $L_{|\mathrm{ker}\, P}$, since the associated eigenvector $b_0$ does not satisfy $\langle b_0, {\beta}\rangle = 0$; on the other hand, the remaining eigenvalues of $L$ are also eigenvalues of $L_{|\mathrm{ker}\, P}$, since the corresponding eigenvectors belong to $\ker\, P$, and hence are in $D(L_{|\mathrm{ker}\, P}) = D(B_{|\mathrm{ker}\, P})$.
This shows that the resolvent set of $B_{|\mathrm{ker}\, P}$ satisfies $\rho(B_{|\mathrm{ker}\, P}) \supset (\lambda_1, +\infty)$.
Then, in order to apply Hille-Yosida's theorem, it suffices to verify that
\begin{equation}\label{eq:HY}
|(\mu-\lambda_1) (B_{|\mathrm{ker}\, P} - \mu)^{-1}|_{\mathcal{L}(\calc(\mathbf{S}^1))} \leq 1, \quad \text{for } \mu > \lambda_1.
\end{equation}
From \citep[Ch. 7, Sec. 3]{coddington:ode}, we see that either $|(B_{|\mathrm{ker}\, P} - \mu)^{-1}|_{\mathcal{L}(\calc(\mathbf{S}^1))}$ or $-|(B_{|\mathrm{ker}\, P} - \mu)^{-1}|_{\mathcal{L}(\calc(\mathbf{S}^1))}$ must  be an eigenvalue of $(B_{|\mathrm{ker}\, P} - \mu)^{-1}$ and that
\begin{equation*}
\sigma\left((B_{|\mathrm{ker}\, P} - \mu)^{-1}\right) = \left\{\frac{1}{\lambda_k - \mu}\right\}_{k \in \mathbb N \setminus \{0\}}.
\end{equation*}
Therefore, recalling that $\{\lambda_k\}_{k \in \mathbb N \setminus \{0\}}$ is a decreasing sequence and that $\mu > \lambda_1$, we obtain $|(B_{|\mathrm{ker}\, P} - \mu)^{-1}|_{\mathcal{L}(\calc(\mathbf{S}^1))} \leq \frac{1}{\mu - \lambda_1}$, and hence $|(\mu-\lambda_1) (B_{|\mathrm{ker}\, P} - \mu)^{-1}|_{\mathcal{L}(\calc(\mathbf{S}^1))} \leq 1$.

Since we have $\rho(B_{|\mathrm{ker}\, P}) \supset (\lambda_1, +\infty)$ and \eqref{eq:HY}, the assumptions of Hille-Yosida's theorem are verified, and we get \eqref{eq:Stestimate}.

Finally, an estimate for the quantity $|(1-P)|_{\mathcal{L}(\calc(\mathbf{S}^1))}$ can be obtained as follows. Let $x \in \calc(\mathbf{S}^1)$ with $|x|_{\calc(\mathbf{S}^1)} = 1$ and recall that, under the standing assumptions, $\beta \in \calc(\mathbf{S}^1)_{++}$; then
\begin{equation*}
|(1-P)x|_\infty \leq 1 + |\langle x, \beta \rangle| \, |{w}|_\infty \leq 1 + |{w}|_\infty \int_0^{2\pi} \beta(\theta) \, \d\theta.
\end{equation*}
Using the latter together with \eqref{eq:Stestimate} into \eqref{eq:stabest} we obtain the claim.
\end{proof}

\begin{remark}\label{rem:stab}
Theorem~\ref{th:econconv2} deserves some comments. The claim could be proved in the setting $X=\dL^2(\mathbf{S}^1)$ -- the setting of  \cite{BFFGjoeg, BFFGpafa} -- but it would not have useful consequences from the point of view of the application.
\blu{Indeed, the norm of $\dL^2(\mathbf{S}^1)$ cannot control pointwise constraints such as the one prescribed by~\eqref{eq:A++}. Therefore, this result cannot be used to prove that, at least for a suitable set of initial data $K_0$, the optimal control $\hat{C}$ for problem~\eqref{eq:econPb0}, defined in~\eqref{cc}, is also optimal for problem~\eqref{eq:econP}. In the aforementioned references, it is just assumed that $\hat{K}$, solution to \eqref{clebis}, verifies the state constraint~\eqref{eq:A++}. In this way, the authors automatically have that $\hat{C}$ is optimal for problem~\eqref{eq:econP}. Otherwise said, this important issue is skipped from the theoretical point of view and verified only in numerical exercises.} In a different context (delay equations), this issue is approached from the theoretical point of view in \cite{BFG} (although the proof has a gap) and in \cite{BDFG}, where the argument to get the result is very involved.

{
Instead, Theorem~\ref{th:econconv2} has an important consequence in our $X=\mathcal{C}(\mathbf{S}^1)$ setting: indeed, if}
\begin{equation}\label{eq:ststestimategen}
\underline{w} \coloneqq \inf_{\mathbf{S}^1} w>0 \ \ \ \mbox{and} \ \ \  \left(1 + |{w}|_\infty \int_0^{2\pi} \beta(\theta) \, \d\theta\right) |K_0 - \langle K_0, \beta \rangle w|_\infty \leq | \langle K_0, \beta \rangle| \,\,\underline{w},
 \end{equation}
then we deduce that $\hat{K}^{K_0}_g(t,\cdot )>0$, for all  $t\in \R_+$, and the optimal control $\hat{C}$ given in \eqref{cc}
is such that $\hat{C}\in\mathcal{A}_{++}(K_0)$; hence, by Remark~\ref{rem:importante}, it is optimal for problem~\eqref{eq:econP}.
\end{remark}

We conclude this work specializing the result above to the case of homogeneous data, i.e., $A(\cdot) \equiv A > 0$ and $\eta(\cdot)\equiv 1$. This corresponds to the setting presented in \citep{boucekkine:spatialAK}, where, as repeatedly said,  the problem is embedded in $X = \dL^2(\mathbf{S}^1)$. In this case the sequence of eigenvalues and eigenvectors of $L$ is fully explicit. In particular, we have that $\lambda_0 = A$, $\lambda_1= A-\sigma$, $b_0=\frac{1}{\sqrt{2\pi}}\mathbf{1}_{\mathbf{S}^1}$. 
Rewriting the statement of Theorem~\ref{th:econconv2} using $\lambda_0$, $\lambda_1$, and $b_0$ above, we get the following corollary.
\begin{corollary}\label{th:econconvconst}
Let $K_0\in X_{++}$ and let $\hat K^{K_0}$ be the solution to the linear integro-PDE~\eqref{clebis}. Assume that $A(1-\gamma) < \rho < A(1-\gamma) + \sigma \gamma$.
Then,
\begin{equation}\label{eq:conv}
\Big|\hat K^{K_0}_g(t) -\frac{1}{2\pi} \int_{\mathbf{S}^1} K_0(\theta) \, \d \theta\,\Big|_{\calc(\mathbf{S}^1)} \leq 2 \, \de^{-(g-A+\sigma) t}\,\, \Bigl|K_0 - \frac{1}{2\pi} \int_{\mathbf{S}^1} K_0(\theta) \, \d \theta \,\Bigr|_{\calc(\mathbf{S}^1)}, \quad \forall t\geq 0,
\end{equation}
where $\hat K^{K_0}_g(t):= \de^{-gt} \hat K^{K_0}(t)$.
In particular,
$$\hat{K}_g^{K_0}(t)\stackrel{t\to +\infty}{\longrightarrow} \frac{1}{2\pi} \int_{\mathbf{S}^1} K_0(\theta) \, \d \theta\, \quad \text{in}\ \mathcal{C}(\mathbf{S}^1),$$ and, if
\begin{equation}\label{eq:ststestimate}
 2 \,\,\Bigl|K_0 - \frac{1}{2\pi} \int_{\mathbf{S}^1} K_0(\theta) \, \d \theta \Bigr|_{\calc(\mathbf{S}^1)} \leq\frac{1}{2\pi} \int_{\mathbf{S}^1}K_0(\theta) \, \d \theta,
\end{equation}
we have $\hat{K}_g(t)\in X_{++}$ for all  $t\in \R_+$. Hence, the optimal control for problem $(P^{\frac{1}{\sqrt{2\pi}}\mathbf{1}_{\mathbf{S}^1}})$, i.e.,
\begin{equation}\label{ccconst}
\hat{C}(t,\theta)=\de^{gt} \, \frac{A-g}{2\pi}\int_{\mathbf{S}^1} {K}_0(\xi) \, \d\xi,
\end{equation}
belongs to $\mathcal{A}_{++}(K_0)$ and therefore, by Remark~\ref{rem:importante}, it is optimal for problem $(P)$.
\end{corollary}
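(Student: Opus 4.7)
The plan is to verify that the homogeneous case $A(\cdot)\equiv A>0$, $\eta(\cdot)\equiv 1$ falls under Theorem~\ref{th:econconv2}, and then to compute the relevant constants explicitly and match them against the statement. First, I would recall the Fourier spectrum of $L=\sigma\partial_\theta^2+A$ on $\calc^2(\mathbf{S}^1)$: the eigenvalues are $\lambda_k=A-\sigma k^2$ with eigenfunctions given by the classical trigonometric basis, so that $\lambda_0=A$, $\lambda_1=A-\sigma$, and $b_0=\frac{1}{\sqrt{2\pi}}\mathbf{1}_{\mathbf{S}^1}$. The double-sided bound $A(1-\gamma)<\rho<A(1-\gamma)+\sigma\gamma$ then yields simultaneously assumption \eqref{ass:rho} and the hypothesis $g=\frac{A-\rho}{\gamma}>A-\sigma=\lambda_1$ needed in Theorem~\ref{th:econconv2}.

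The next step is to compute the functions $\beta$, $w$ and the constant $M$ from the statement of Theorem~\ref{th:econconv2} in closed form. Since $b_0$ is constant, the definition \eqref{eq:alpha0} yields
\begin{equation*}
\alpha_0=\Bigl[\tfrac{\gamma}{\rho-A(1-\gamma)}(2\pi)^{\frac{\gamma+1}{2\gamma}}\Bigr]^{\frac{\gamma}{1-\gamma}},
\end{equation*}
and $\beta=\alpha_0 b_0$ is a strictly positive constant. Consequently $\beta^{-1/\gamma}\eta^{(q+\gamma-1)/\gamma}$ is also constant, so that in the Fourier expansion \eqref{eq:beta0}--\eqref{eq:betak} one has $\beta_k=(b_k,\mathrm{const})_{\dL^2}=0$ for every $k\geq 1$, because $b_k$ is orthogonal to constants. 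Formula \eqref{eq:e00} therefore collapses to $w=b_0/\alpha_0=\frac{1}{\alpha_0\sqrt{2\pi}}\mathbf{1}_{\mathbf{S}^1}$, a strictly positive constant function. A direct computation gives
\begin{equation*}
\langle K_0,\beta\rangle w=\alpha_0\tfrac{1}{\sqrt{2\pi}}\!\int_{\mathbf{S}^1}\!\!K_0\,\d\theta\cdot\tfrac{1}{\alpha_0\sqrt{2\pi}}=\tfrac{1}{2\pi}\!\int_{\mathbf{S}^1}\!\!K_0(\theta)\,\d\theta,\qquad M=1+|w|_\infty\!\int_0^{2\pi}\!\!\beta\,\d\theta=1+1=2.
\end{equation*}
Plugging these values together with $g-\lambda_1=g-A+\sigma$ into the conclusion of Theorem~\ref{th:econconv2} produces exactly inequality \eqref{eq:conv}, and hence the uniform convergence of $\hat K^{K_0}_g(t)$ to $\frac{1}{2\pi}\int K_0\,\d\theta$ in $\calc(\mathbf{S}^1)$.

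For the admissibility statement, I would invoke the observation made in Remark~\ref{rem:stab}: since $w$ is a strictly positive constant, $\underline w=|w|_\infty=\frac{1}{\alpha_0\sqrt{2\pi}}$, so condition \eqref{eq:ststestimategen} reduces (after dividing by $\alpha_0/\sqrt{2\pi}$) precisely to the assumed inequality \eqref{eq:ststestimate}. By the argument in Remark~\ref{rem:stab}, this forces $\hat K^{K_0}_g(t)>0$ pointwise on $\mathbf{S}^1$ for all $t\geq 0$, hence $\hat K^{K_0}(t)\in X_{++}$ and $\hat C\in\cala_{++}(K_0)$. Remark~\ref{rem:importante}(ii) then upgrades optimality of $\hat C$ from problem $(P^{b_0^\star})$ to the original constrained problem $(P)$. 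Finally, I would substitute $b_0=\frac{1}{\sqrt{2\pi}}$ and $\eta=1$ into the general formula \eqref{cc}; using that $\rho-A(1-\gamma)=\gamma(A-g)$ one simplifies $\alpha^{-1/\gamma}=(A-g)(2\pi)^{-(\gamma+1)/(2\gamma)}$, and the resulting expression collapses to $\hat C(t,\theta)=\de^{gt}\frac{A-g}{2\pi}\int_{\mathbf{S}^1}K_0(\xi)\,\d\xi$, matching \eqref{ccconst}.

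There is no genuine obstacle here: the corollary is a computational specialization of Theorem~\ref{th:econconv2}, and the key simplification --- the vanishing of all higher Fourier coefficients $\beta_k$ caused by the constancy of $b_0$ --- makes the constants $w$, $M$ and the threshold \eqref{eq:ststestimate} fall out cleanly. The only care needed is in bookkeeping the factors $(2\pi)^{\bullet}$ when rewriting $\alpha$ and $\alpha_0$, and in checking that the two-sided bound on $\rho$ is exactly equivalent to the conjunction of \eqref{ass:rho} with $g>\lambda_1$.
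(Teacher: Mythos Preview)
Your proposal is correct and follows essentially the same approach as the paper's own proof: both reduce the corollary to Theorem~\ref{th:econconv2} by observing that in the homogeneous case the higher Fourier coefficients $\beta_k$ vanish, whence $w=b_0/\alpha_0$, $\langle K_0,\beta\rangle w=\frac{1}{2\pi}\int K_0$, and $M=2$. You are in fact more thorough than the paper, which does not spell out the verification of the control formula~\eqref{ccconst} or the explicit equivalence between the two-sided bound on $\rho$ and the pair of hypotheses \eqref{ass:rho}, $g>\lambda_1$.
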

\blu{
\begin{proof}
We need just to rewrite \eqref{eq:convgen} in the present setting to get \eqref{eq:conv}. The remaining statements follow immediately (see also Remark~\ref{rem:stab}).
Notice that, in the present setting, the coefficients $\beta_{k}$ defined in \eqref{eq:betak} vanish for all $k\geq 1$; hence,
$$
w=\frac{b_{0}}{\alpha_{0}}.
$$
Therefore, since $b_0=\frac{1}{\sqrt{2\pi}}\mathbf{1}_{\mathbf{S}^1}$, we get
\begin{equation*}
\langle K_0, \beta \rangle w = \alpha_0 \langle K_0, b_0 \rangle  \frac{b_{0}}{\alpha_{0}} = \frac{1}{2\pi} \int_{\mathbf{S}^1} K_0(\theta) \, \d \theta.
\end{equation*}
Moreover, the constant $M$ appearing in \eqref{eq:convgen}, satisfies
\begin{equation*}
M = 1 + |{w}|_\infty \int_0^{2\pi} \beta(\theta) \, \d\theta = 1 + \left|\frac{b_0}{\alpha_0}\right|_\infty \int_0^{2\pi} \alpha_0 b_0(\theta) \, \d\theta = 2.
\end{equation*}
Finally, recalling that $\lambda_1 = A - \sigma$, rewriting \eqref{eq:convgen} we get \eqref{eq:conv}.
\end{proof}
}
\blu{\section{Conclusions and potential extensions}\label{sec:conclusion}
In this paper we studied a class of optimal control problems in infinite dimension with a positivity state constraint, motivated by economic applications. With respect to the past stream of literature, dealing with the \mbox{$\dL^{2}$ setting}, we provided a more general abstract framework, that includes the space of continuous functions. This allows to treat rigorously the positivity state constraint, by introducing a suitable auxiliary optimal control problem and verifying the admissibility of its optimal path for the original problem. 
As explained in the Introduction, this was an issue that in previous works was left out at the theoretical level and checked only numerically (see Remark~\ref{rem:stab}). In Section~\ref{sec:app} we showed, instead, that by formulating the problem in the space of continuous functions it is possible to deal successfully with the positivity state constraint, thanks to the stability results given in Section~\ref{sec:ss}.
}

\blu{
Our approach is based on the particular structure of the problem, that allows to find explicit solutions to the HJB equation of the auxiliary problem, as shown in Section~\ref{sec:hjbexpl}. Indeed, the state equation~\eqref{state} is linear, in~\eqref{eq:Nexpl} we choose the bounded and positive linear operator $N$ appearing in~\eqref{state} to be of multiplicative type, and the function $u$ in~\eqref{eq:uexpl} is a power utility function.
}

\blu{Nonetheless, it may be possible to adapt our approach to treat optimal control problems with a positivity state constraint, where nonlinear state equations and/or non-homogeneous utility functions appear, as in various economic models. To this end, we suggest a possible scheme for future work in the case where a semilinear state equation is considered, i.e., where an additional term $F(x(t))$, depending only on the state variable $x$, appears in the state equation~\eqref{state}. 
\begin{enumerate}[(i)]
  \item A nonlinear semigroup can be associated to semilinear abstract equations and it is necessary to impose that it preserves strict positivity, similarly to the requirement of point~\textit{(\ref{hyp:L})} of Assumption~\ref{ass:L}.
  \item The pointwise positivity state constraint still makes the HJB equation associated to the optimal control problem very difficult to study. Therefore, we can consider an auxiliary problem in a half-space containing the positive cone of the state space $X$.
  \item It is now necessary to prove existence, uniqueness, and regularity of the solutions to the HJB equation of the auxiliary optimal control problem: this may be very hard, but there are results of this type in some cases --- see, e.g., \cite{FT, FGG1,FGG2}, where the concept of viscosity solutions is used.
  \item After the previous point is settled, a verification theorem should be established, as we did in Theorem~\ref{verification}: its proof does not require to find explicit solutions to the HJB equation of the auxiliary optimal control problem --- see again \cite{FT, FGG1,FGG2}.
  \item Finally, one should prove that under suitable assumptions the solution to the auxiliary optimal control problem remains in the positive cone of the state space $X$, as happens in our setting if the \emph{a priori} estimate~\eqref{eq:ststestimategen} holds, by virtue of Theorem~\eqref{th:econconv2}. This could be achieved by extending the stability results given in Section~\ref{sec:ss} to the case of nonlinear semigroups. 
  \end{enumerate}}

\bibliographystyle{plainnat}
\bibliography{Bibliography}
\end{document}